\setlist{
  listparindent=\parindent,
  parsep=0pt,
}
\titleformat\paragraph{\itshape}{hang}{}{}{}
\titlespacing*\paragraph{0em}{*2}{*1.5}
\numberwithin{equation}{section}
\theoremstyle{plain} 
\newtheorem{theorem}{Theorem}[section]
\newtheorem{lemma}[theorem]{Lemma}
\newtheorem{proposition}[theorem]{Proposition}
\newtheorem{definition}[theorem]{Definition}
\theoremstyle{definition} 
\newtheorem{example}[theorem]{Example}
\newtheorem{remark}[theorem]{Remark}
\newtheorem{condition}[theorem]{Condition}
\theoremstyle{plain}
\newcommand\CorrespondingAuthor[1]{%
  \begingroup%
  \def\@makefnmark{}%
  \footnotetext{Corresponding author: #1}%
  \endgroup%
}
\renewenvironment{abstract}{%
  \small%
  \providecommand\keywords{%
    \par\medskip\noindent\textit{Keywords:}\xspace}%
  \providecommand\amssc{%
    \par\medskip\noindent\textit{AMS Subject Classification:}\xspace}%
  \begin{center}%
    \bfseries \abstractname\vspace{-.5em}\vspace{\z@}%
  \end{center}%
  \quote%
}{\endquote}
\newlength\dlf
\newsavebox\hest
\newsavebox\hestii
\def\ddanglefactor{0.5}
\newcommand\dda@measure[1]{
  \global\sbox\hestii{$\m@th\currentmathstyle
    \left\langle\vphantom{#1}\right.$}
}
\newcommand\dda@autoscale[1]{
  \dda@measure{#1}
  \left\langle\kern-\ddanglefactor\wd\hestii\left\langle
      #1
    \right\rangle\kern-\ddanglefactor\wd\hestii\right\rangle
}
\newcommand\dda@manualscale[2][\@gobble]{
  \@nameuse{\expandafter\@gobble\string #1l}\langle
  \kern-\ddanglefactor\wd\hestii
  \@nameuse{\expandafter\@gobble\string #1l}\langle
  #2
  \@nameuse{\expandafter\@gobble\string #1r}\rangle
  \kern-\ddanglefactor\wd\hestii
  \@nameuse{\expandafter\@gobble\string #1r}\rangle
}
\newcommand\ddangle{\@ifstar{\dda@autoscale}{\dda@manualscale}}
\newcommand{\cA}{\mathcal{A}}
\newcommand{\cB}{\mathcal{B}}
\newcommand{\cC}{\mathcal{C}}
\newcommand{\cF}{\mathcal{F}}
\newcommand{\cS}{\mathcal{S}}
\newcommand{\cR}{\mathcal{R}}
\newcommand{\cL}{\mathcal{L}}
\newcommand{\cX}{\mathcal{X}}
\newcommand{\bP}{\mathbb{P}}
\newcommand{\bN}{\mathbb{N}}
\newcommand{\bR}{\mathbb{R}}
\newcommand{\bE}{\mathbb{E}}
\newcommand{\bL}{\mathbb{L}}
\newcommand{\bM}{\mathbb{M}}
\newcommand{\rC}{\mathrm{C}}
\DeclarePairedDelimiter\norm{\lVert}{\rVert}
\DeclarePairedDelimiter\abs{\lvert}{\rvert}
\DeclareMathOperator*{\plim}{\mathbb{P}-lim}
\newcommand{\lss}{\text{LSS}\xspace}
\newcommand{\bss}{\text{BSS}\xspace}
\newcommand{\ID}{\text{ID}\xspace}
\newcommand{\SD}{\text{SD}\xspace}
\newcommand{\nullarg}{{}\cdot{}}
\newcommand{\indf}{\mathbbm{1}}
\newcommand\olspan{\operatorname{\overline{span}}}
\begin{document}
\title{Selfdecomposable Fields}

\author[1]{Ole E. Barndorff-Nielsen} \author[2]{Orimar Sauri}
\author[1]{Benedykt Szozda}

\affil[1]{Department of Mathematics, Aarhus University,
  \texttt{\{oebn,szozda\}@math.au.dk}} \affil[2]{Department of Economics,
  Aarhus University, \texttt{osauri@creates.au.dk}}

\date{}

\maketitle

\begin{abstract}
  In the present paper we study selfdecomposability of random fields,
  as defined directly rather than in terms of finite-dimensional
  distributions.  The main tools in our analysis are the master L\'evy
  measure and the associated L\'evy-It\^o representation.  We give the
  dilation criterion for selfdecomposability analogous to the
  classical one.  Next, we give necessary and sufficient conditions
  (in terms of the kernel functions) for a Volterra field driven by a
  L\'evy basis to be selfdecomposable.  In this context we also study
  the so-called Urbanik classes of random fields.  We follow this with
  the study of existence and selfdecomposability of integrated
  Volterra fields.  Finally, we introduce infinitely divisible
  field-valued L\'evy processes, give the L\'evy-It\^o representation
  associated with them and study stochastic integration with respect
  to such processes.  We provide examples in the form of L\'evy
  semistationary processes with a Gamma kernel and Ornstein-Uhlenbeck
  processes.

  \keywords selfdecomposability of random fields, Urbanik classes of
  random fields, random fields, Volterra fields

  \amssc 60E07, 60G51, 60G60
\end{abstract}

\section{Introduction}

\label{sec:introduction}

Of the many interesting classes of infinitely divisible distributions
(cf.\ for example~\cite{Bon92}) that of selfdecomposable laws -- the
\SD class -- has a foremost position. Originally this class was
defined as the family of limit distributions of normalized partial
sums. Paul L\'{e}vy was the first to study this family in depth. In
particular he determined the form of, what is now known as the
L\'{e}vy measure, of a selfdecomposable distribution. In the early
literature the class was also referred to as L\'{e}vy's probability
measures. For a long time these laws held a rather anonymous
position. Even in Michel Lo\`{e}ve's detailed and beautifully written
biographical account of L\'{e}vy's life and contributions to
Probability Theory (\cite{Loeve73}) the concept of selfdecomposability
is not mentioned.  And in volume II of Feller's ``An Introduction to
Probability Theory and Its Applications'' it is treated only very
briefly (Section~XVII.8), as a 'special topic' under the name of class
$L$. L\'{e}vy himself described his work on selfdecomposability in his
monumental monograph \emph{Theorie de l'addition des variables
  al\'{e}atoires} (1937).

The more recent prominence of selfdecomposability came from the
realisation, due to~\cite{Wolfe82}, that any \SD distribution can be
represented as that of a stochastic integral with respect to a L\'evy
process, the integrand being the negative exponential; or, otherwise
put, the limit law of the solution to a linear stochastic differential
equation driven by a L\'evy process $L$ is selfdecomposable provided
the L\'evy measure of the L\'{e}vy seed of $L$ satisfies a log moment
condition; and all selfdecomposable laws are representable in this
way. In turn this gave rise to the concept of L\'evy-driven OU
processes, continuous time Markov processes whose marginals are
\SD. For an account of the developments in regard to stochastic
integral representations of classes of \ID laws in the period
1982--2010 see~\cite{Jur11}.

Looking at this from a modelling point of view, suppose that subject
matter knowledge and empirical data indicate that the phenomenon under
study might be described as a continuous time stationary process. The
simplest type of such a process is a Markov process and for a model to
be consistent with this the one-dimensional marginal of the process
must be \SD, an assumption that may be supported by the available
knowledge.

In general, a stochastic process has traditionally been said to be \SD
if all its finite-dimensional distributions are \SD. However,
in~\cite{BNMS06a} it was proposed to define selfdecomposability of a
stochastic process directly, saying that a process
$X=(X_{t})_{t\in \bR}$ is \SD if for all $q\in(0,1)$ it can be
represented in law as the sum of $qX$ plus an independent stochastic
process $V^{(q)}$. It is this approach we take in the present paper
where we more generally study selfdecomposable stochastic fields.

In Mathematical Finance, in Turbulence and in other fields, OU
processes, and the extended concept of supOU processes, have had an
important role as models for stochastic volatility, see for
instance~\cite{BNStel13}. We intend here to develop the similar
approach for stochastic fields, with the aim of incorporating such
fields in models of Ambit Stochastics type, in particular as regards
applications to turbulence studies. Note that in Turbulence, and many
other fields of Physics, stochastic volatility is referred to as
intermittency.

A central object in this context is the master L\'{e}vy measure of a
stochastic field, a concept introduced in~\cite{Mar70}, see
also~\cite{BNMS06a}, and recently brought on a more analytically
tractable footing by~\cite{Ros07a,Ros07b,Ros08,Ros13}. Importantly,
there Rosi\'{n}ski also discusses an associated L\'{e}vy-It\^{o}
representation. In the following we will build substantially on the
results and propositions presented
in~\cite{Ros07a,Ros07b,Ros08,Ros13}.

With the master L\'{e}vy measure and the associated L\'{e}vy-It\^{o}
representation in hand it is in particular possible to characterize
volatility/intermittency fields generated from \SD fields in much the
same way that OU processes are engendered from \SD random variables.

The present paper is organized as
follows. Section~\ref{sec:background} provides background material on
ambit fields, Volterra fields, L\'{e}vy bases and integration with
respect to L\'{e}vy bases. In Section~\ref{sec:some-levy-theory},
based on the recent work of~\cite{Ros07a,Ros07b,Ros08,Ros13}, we
introduce the concept of the master L\'{e}vy measure of \ID fields and
present the associated L\'{e}vy-It\^{o} representation. Finally, we
give the dilation criterion for selfdecomposability of \ID
fields. Section~\ref{section4} is devoted to the study of
selfdecomposability of Volterra fields. In particular, we study the
master L\'{e}vy measure of Volterra fields and give conditions on the
kernel of a Volterra field that ensure inheritance of the \SD property
of the background driving noise to the resulting Volterra field. We
close Section~\ref{section4} with the converse result, that is we give
conditions under which the Volterra field is \SD if and only if the
background driving noise is \SD. All of the results mentioned above
hold if we exchange \SD class with the Urbanik class $\bL_{m}$. In
Section~\ref{sec:integr-id-volt} we study the existence and
selfdecomposability of integrated Volterra
fields. Section~\ref{sec:id-field-valued} is devoted to \ID
field-valued L\'{e}vy processes. We give a L\'{e}vy-It\^{o}
representation of such processes and study integration with respect to
such processes. We close the section with the study of Volterra and OU
type field-valued processes and their selfdecomposability. Section 7
concludes.

\section{Background}
\label{sec:background}
In the present section we give the definition of ambit fields and we
recall basic results related to L\'{e}vy bases and stochastic
integration with respect to L\'{e}vy bases.  Throughout this paper
$(\Omega ,\cF,\bP)$ denotes a complete probability space

\subsection{Ambit fields, Volterra fields and \lss processes}

Ambit fields are random fields describing the dynamics in a
stochastically developing field, for instance a turbulent wind
field. A key characteristic of the modelling framework of ambit
stochastics, which distinguishes this from other approaches is that
beyond the most basic kind of random input it also specifically
incorporates additional, often drastically changing, inputs referred
to as volatility or intermittency. Another distinguishing feature is
the presence of ambit sets that delineate which part of space-time may
influence the value of the field at any given point in space-time.

In terms of mathematical formulae, in its original form an ambit field
is specified by
\begin{align*}
  Y(t,x)={}& \mu +\int_{A(t,x)}g(t,s,x,\xi)\sigma(s,\xi)\,L(dsd\xi) \\
           & +\int_{D(t,x)}q(t,s,x,\xi)\chi(s,\xi)\,dsd\xi
\end{align*}
where $t$ denotes time while $x$ gives the position in $d$-dimensional
Euclidean space. Further, $A(t,x)$ and $D(t,x)$ are subsets of
$\bR \times \bR^{d}$, termed ambit sets, $g$ and $q$ are deterministic
weight functions, $\sigma$ and $\chi$ are stochastic fields
representing the volatility or intermittency. Finally, $L$ denotes a
L\'{e}vy basis (i.e.\ an independently scattered and infinitely
divisible random measure). For aspects of the theory and applications
of Ambit processes and fields
see~\cite{BNBV14a,BNBV11,BNBS14,BNLSV13,BNESch05,BNSch07,ChKl13,HedSch14b,Pod14}
and~\cite{Pakk14}.

A \textit{L\'{e}vy semistationary process} (\lss) is a stochastic
process $(Y_{t})_{t\in \bR}$ on a filtered probability space
$(\Omega ,\cF,(\cF_t)_{t\in \bR},\bP)$ which is described by the
following dynamics
\begin{equation*}
  Y_{t}
  = \theta
  + \int_{-\infty}^{t}g(t-s)\sigma_{s}\,dL_{s}
  + \int_{-\infty}^{t}q(t-s)a_{s}\,ds,
  \qquad t\in \bR,
\end{equation*}
where $\theta \in \bR$, $L$ is a L\'{e}vy process with triplet
$(\gamma ,b,\nu)$, $g$ and $q$ are deterministic functions such that
$g(x)=q(x)=0$ for $x\leq 0$, and $\sigma$ and $a$ are adapted
c\`{a}dl\`{a}g processes. When $L$ is a two-sided Brownian motion $Y$
is called \textit{Brownian semistationary process} (\bss). Observe
that an \lss process is a null-space Ambit field. For further
references to theory and applications of L\'{e}vy semistationary
processes, see for instance~\cite{VV12,BEV14,BFK13}.

\subsection{L\'{e}vy bases}

Denoting by $\ID(\bR^{n})$ the space of infinitely divisible (\ID for
short) distributions on~$\bR^{n}$, we recall that any
$\mu \in \ID(\bR^{n})$ has a L\'{e}vy-Khintchine representation given
by
\begin{equation*}
  \log \widehat{\mu}(\theta)
  = i\langle \theta ,\gamma \rangle
  - \tfrac{1}{2} \langle \theta ,B\theta \rangle
  + \int_{\bR^{n}}\left[ e^{i\langle
      \theta ,x\rangle}-1-i\langle \tau_{n}(x) ,\theta \rangle\right]
  \nu(dx),
  \qquad \theta \in \bR^{n},
\end{equation*}
where $\widehat{\mu }$ is the characteristic function of the law of
$\mu$, $\gamma \in \bR^{n}$, $B$ is a symmetric non-negative definite
matrix on $\bR^{n\times n}$ and $\nu$ is a L\'{e}vy measure, i.e.\
$\nu(\{ 0^{n}\}) =0$, with $0^{n}$ denoting the origin in $\bR^{n},$
and $\int_{\bR^{n}}1\wedge \abs{x}^{2}\nu(dx) <\infty .$ Here, we
assume that the truncation function $\tau_{n}$ is given by
$\tau_{n}(x_{1},\ldots ,x_{n}) = (\frac{x_{i}}{1\vee \abs{x_{i}}})_{i=1}^{n},\ (x_{1},\ldots ,x_{n}) \in \bR^{n}$. By
$\SD(\bR^{n}) ,$ we mean the subset of $\ID(\bR^{n})$ of
selfdecomposable (\SD) distributions on $\bR^{n}$. More precisely,
$\mu \in \ID( \bR^{n})$ belongs to $\SD(\bR^{n})$ if and only if for
any $q>1$ there exists $\mu_{q}\in \ID(\bR^{n})$ such that
\begin{equation*}
  \widehat{\mu }(\theta)
  = \widehat{\mu }(q^{-1}\theta) \widehat{\mu }_{q}(\theta)
  \qquad  \text{for any $\theta \in \bR^{n}$}.
\end{equation*}

Let $\cS$ be a non-empty set and $\cR$ a $\delta $-ring of subsets of
$\cS$ having the property that exists an increasing sequence
$\{ S_{n}\} \subset \cS$ with $\bigcup_{n}S_{n}=\cS$. A real-valued
stochastic field $L=\{ L(A) :A\in \cR\}$ defined on
$(\Omega ,\cF,\bP)$ is called \emph{independently scattered random
  measure} (i.s.r.m. for short), if for every sequence
$\{ A_{n}\}_{n\geq 1}$ of disjoint sets in $\cR$, the random variables
$(L(A_{n}))_{n\geq 1}$ are independent, and if
$\bigcup_{n\geq 1}A_{n}$ belongs to $\cR$, then we also have
\begin{equation*}
  L\Big(\bigcup_{n\geq 1}A_{n}\Big)
  = \sum_{n\geq 1}L(A_{n})
  \qquad\text{a.s.},
\end{equation*}
where the series is assumed to converge almost surely. When the law of
$L(A)$ belongs to $\ID(\bR)$ for any $A\in \cR$, $L$ is called a
\emph{L\'{e}vy basis}. Any L\'{e}vy basis admits a L\'{e}vy-Khintchine
representation:
\begin{equation*}
  \rC\{\theta \ddagger L(A) \}
  = \int_{A}\psi(\theta ,s) c(ds) ,
  \qquad \theta \in \bR,A\in \cR,
\end{equation*}
where $\rC\{\theta \ddagger X\}$ denotes the cumulant function of a
random variable $X$ and
\begin{equation}
  \psi(\theta ,s)
  \coloneqq \gamma(s) \theta
  - \tfrac{1}{2} b^{2}(s) \theta^{2}
  + \int_{\bR}[ e^{i\theta x}-1-i\theta \tau_{1}(x) ]
  \rho(s,dx) ,
  \qquad \theta \in \bR,s\in \cS.
  \label{eqn1.1}
\end{equation}
The functions $\gamma ,b$ and $\rho(\nullarg ,dx)$ are measurable with
$b\geq 0$ and $\rho(s,\nullarg)$ is a L\'{e}vy measure for every
$s\in \cS$. The measure $c$ is defined on
$\cB_{\cS}\coloneqq \sigma(\cR)$ and is called the \emph{control
  measure} of $L$.  We will refer to
$(\gamma(s) ,b(s) ,\rho(s, dx) ,c(d s))$ as the \emph{characteristic
  quadruplet} of $L$. If $L$ has characteristic quadruplet,
$(\gamma(s) ,b(s) ,\rho(s, dx) ,c(ds)) ,$ the associated family of
random variables $(L^{\prime }(s))_{s\in \cS}$ such that
$L^{\prime }(s)$ is \ID and has characteristic triplet
$(\gamma(s) ,b(s) ,\rho(s, dx))$ is called \emph{L\'{e}vy seeds.} When
$b\equiv 0$, we say that $L$ is \emph{Poissonian}. If $\gamma ,b$ and
$\rho$ do not depend on $s$ we say that $L$ is
\emph{factorizable}. Moreover, $L$ will be called \emph{homogeneous}
if it is factorizable and $c$ is proportional to the Lebesgue measure.

If we put $\cR=\cB_{b}(\bR^{k})$ the bounded Borel sets and add the
extra condition $L(\{ x\}) =0$ a.s. for all $x\in \bR^{k}$, $L$ has a
L\'{e}vy-It\^{o} decomposition: We have that almost surely
\begin{equation*}
  L(A)
  = \int_{A}\gamma(s) c(ds)
  + W(A)
  + \int_{A}\int_{\abs{x} >1} x N(dxds)
  + \int_{A}\int_{\abs{x} \leq 1} x \widetilde{N}(dxds),
  \quad A\in \cR,
\end{equation*}
where $W$ is a centered Gaussian L\'{e}vy basis with
$\bE(W( A) W(B)) =\int_{A\cap B}b(s) c(ds)$ for all $A,B\in \cR$,
$\widetilde{N}$ and $N$ are compensated and non-compensated Poisson
random measures on $\bR^{k}\times \bR$ with intensity
$\rho(s, dx) c(ds)$, respectively. Additionally, $W$ and $N$ are
independent. See~\cite{Ped03} for more details.

\subsection{Stochastic integration with respect to L\'{e}vy bases}
\label{stochintsec}

In the following, we present a short review of~\cite{RajRos89}
concerning to the existence of stochastic integrals of the form
$\int_{\cS}f(s) L(ds)$, where $f\colon\cS\rightarrow\bR$ is a
measurable function and $L$ a L\'{e}vy basis with characteristic
quadruplet $(\gamma(s) ,b( s) ,\rho(s, dx) ,c(ds))$.

Let $\cL^{0}(\Omega ,\cF,\bP)$ be the space of real-valued random
variables endowed with convergence in probability.  Consider
$\vartheta$, the space of simple functions on $(\cS,\cR)$, i.e.\
$f\in \vartheta$ if and only if $f$ can be written as
\begin{equation*}
  f(s)
  = \sum\limits_{i=1}^{k}a_{i}\indf_{A_{i}}(s),
  \qquad s\in \cS,
\end{equation*}
where $A_{i}\in \cR$ and $a_{i}\in \bR$ for $i=1,\ldots ,k$.  Given
$f\in \vartheta$, define the linear operator
$m\colon\vartheta \rightarrow \cL^{0}(\Omega ,\cF,\bP)$ by
\begin{equation}
  m(f)
  \coloneqq \sum\limits_{i=1}^{k}a_{i}L(A_{i}).
  \label{eqn1.3}
\end{equation}
In stochastic integration theory, commonly one is looking for a linear
extension of operators of the form \eqref{eqn1.3} to a suitable space,
let's say $I_{m}$, such that $m(f)$ can be approximated by simple
integrals of elements of $\vartheta$. More precisely, if $m$ can be
extended to $I_{m}$ and $\vartheta$ is dense in this set, we say that
$f$ is $L$-integrable or $f\in I_{m}$ and we define its stochastic
integral with respect to $L$ as
\begin{equation}
  \int_{\cS}f(s) L(ds)
  \coloneqq \plim_{n\rightarrow \infty} m(f_{n}),
  \label{eqn1.3.1}
\end{equation}
provided that $f_{n}\in \vartheta$ and $f_{n}\rightarrow f$ $c$-a.e.

\newpage

In~\cite{RajRos89}, it has been shown that the simple integral
\eqref{eqn1.3} can be extended to the so-called Musielak-Orlicz space:
\begin{equation*}
  I_{m}
  = \bigl\{ f\colon(\cS,\cB_{\cS})\rightarrow(\bR,\cB(\bR))
  : \int_{\cS}\Phi_{0}(|f(s)|,s)c(ds)<\infty\bigr\} ,
\end{equation*}
where
\begin{equation}
  \Phi_{p}(r,s)
  \coloneqq \sup_{\abs{c} \leq 1} H(cr,s)
  + b^{2}(s)r^{2}
  + \int_{\bR}
  [ |xr|^{p}\indf_{\{|xr|>1\}}+|xr|^{2}\indf_{\{|xr|\leq 1\}}]
  \rho(s,dx),
  \label{eqn1.3.2}
\end{equation}
with $p\geq 0,r\in \bR,s\in \cS$ and
\begin{equation}
  H(r,s)
  \coloneqq \abs[\big]{
    \gamma(s)r
    + \int_{\bR}[\tau_{1}(xr)-r\tau_{1}(x)]\rho(s,dx)},
  \qquad r\in \bR,s\in \cS.
  \label{eqn1.3.3}
\end{equation}
For a comprehensive introduction to Musielak-Orlicz spaces, we refer
to \cite{RaoRen94}.

When $f\in I_{m},$ $\int_{\cS}f(s)L(ds)$ is \ID and
\begin{equation}
  \rC\left\{ \theta \ddagger \int_{\cS}f(s)L(ds)\right\}
  =\int_{\cS}\psi(f(s)\theta ,s)c(ds),
  \qquad \theta \in \bR,
  \label{eqn0.1}
\end{equation}
with $\psi$ as in \eqref{eqn1.1}.

Fix $p\geq 0$ such that $\bE(\abs{L(A)}^{p}) <\infty$ for all
$A\in \cR$ and define
\begin{equation}
  \cL_{\Phi_{p}}
  \coloneqq \bigl\{ f\colon(\cS,\cB_{\cS})\rightarrow(\bR,\cB(\bR))
  :\int_{\cS}\Phi_{p}(\abs{f(s)},s) c(ds) <\infty \bigr\} .
  \label{eqn0.2}
\end{equation}
$\cL_{\Phi_{p}}$ is the space of $L$-integrable functions having
finite $p$-moment. When $p=0$, $\cL_{\Phi_{0}}=I_{m}$, i.e.\
$\cL_{\Phi_{0}}$ is the space of $L$-integrable functions.
Furthermore, $\cL_{\Phi_{p}}$ endowed with the Luxemburg norm
\begin{equation}
  \norm{f}_{\Phi_{p}}
  \coloneqq \inf \bigl\{a>0:\int_{\cS}\Phi_{p}(a^{-1}\abs{f(s)},s)
  c(ds) \leq 1\bigr\} ,
  \label{eqn0.3}
\end{equation}
is a separable Banach space. Observe that $f\in \cL_{\Phi_{p}}$ if and
only if $\norm{f}_{\Phi_{p}}<\infty $.

Recall that $\cL^{0}(\Omega ,\cF,\bP)$ is the space of random
variables endowed with the convergence in probability. The following
properties of $\int_{\cS}f(s) L( ds)$ will be useful for the rest of
the paper, see~\cite{RajRos89} for proofs:

\begin{enumerate}
\item The mapping
  $(f\in \cL_{\Phi_{p}}) \longmapsto \big( \int_{\cS}f(s) L(ds) \in \cL^{p}(\Omega ,\cF,\bP) \big)$
  is continuous, i.e.\ if $f_{n}\rightarrow 0$ in $\cL_{\Phi_{p}}$,
  then $\int_{\cS}f_{n}(s) L(ds) \rightarrow 0$ in
  $ \cL^{p}(\Omega ,\cF,\bP)$;

\item If $L$ is symmetric or centered, then for any $p\geq 0$ the
  mapping
  $(f\in \cL_{\Phi_{p}}) \longmapsto \big( \int_{\cS}f(s) L(ds) \in \cL^{p}(\Omega , \cF,\bP) \big)$
  is an isomorphism between $\cL_{\Phi_{p}}$ and
  $\cL^{p}(\Omega ,\cF,\bP)$.
\end{enumerate}

\section{Some L\'{e}vy theory of \ID fields}

\label{sec:some-levy-theory} In this part we introduce the notions of
infinite divisible and selfdecomposable fields as well as some basic
properties of such fields.

\subsection{Infinite divisibility and selfdecomposability of
  stochastic fields}

Let $U$ be a non-empty index set and $X=(X_{u})_{u\in U}$ be a
real-valued stochastic field defined on $(\Omega ,\cF,\bP)$. We say
that $X$ is \emph{infinitely divisible}, writing
$\cL(X)\in \ID(\bR^{U})$, with $\cL(X)$ denoting the law of the field
$X$, if for any $n\in \bN$ there are $X^{i,n}=(X_{u}^{i,n})_{u\in U}$,
$i=1,\ldots ,n$, independent and identically distributed stochastic
fields such that
\begin{equation*}
  X\overset{d}{=}X^{1,n}+X^{2,n}+\cdots +X^{n,n}.
\end{equation*}
In the same way, we say that $X$ is \emph{selfdecomposable}, writing
$\cL(X)\in \SD(\bR^{U})$, if for any $q>1$ there exists
$X^{^{\prime }}$, an independent copy of $X$, and $V^{q}$ a random
field independent of $X^{\prime }$, such that
\begin{equation*}
  X\overset{d}{=}q^{-1}X^{^{\prime }}+V^{(q)}.
\end{equation*}
Observe that when $U$ is finite, the definition of infinite
divisibility and selfdecomposability coincide with the usual concepts
of \ID and \SD random vectors. We denote by $\bL_{m}(\bR^{U})$, for
$m=0,1,\ldots$, the $m$-th \emph{Urbanik class}, i.e.\
$\cL(X)\in \bL_{m}(\bR^{U})$ if and only if $X$ is \SD and the field
$\cL(V^{(q)})\in \bL_{m-1}(\bR^{U})$. Here
$\bL_{0}(\bR^{U})=\SD(\bR^{U})$.

\subsection{The master L\'{e}vy measure of an \ID field}

In this subsection we extend the family of L\'{e}vy measures
associated to an \ID field to a measure in the space of paths, what we
refer to as the master L\'{e}vy measure. Such a measure was originally
introduced in \cite{Mar70}. Later on, it was studied in depth by
\cite{Ros07a,Ros07b,Ros08,Ros13}, who also established the associated
L\'{e}vy-It\^{o} representation. For completeness of our discussion of
this result, in the appendix of this paper we present a detailed
proof.

Let us introduce some notation. For a given non-empty set $U$, denote
by $\widehat{U}$ the collection of all finite subsets of $U$. For any
$\widehat{u}\in \widehat{U},$ we write
$\bR^{\widehat{u}}\coloneqq \Pi_{u\in \widehat{u}}\bR$, i.e.\
$\bR^{\widehat{u}}$ is $\#\widehat{u}$-dimensional Euclidean space
with $\#\widehat{u}$ denoting the cardinality of
$\widehat{u}$. Furthermore, $0^{\widehat{u}}$ denotes the origin in
$\bR^{\widehat{u}}$ and
$X_{\widehat{u}}\coloneqq \pi_{\widehat{u}}(X)=(X_{u})_{u\in \widehat{u}}$. Here
$\pi_{\widehat{u}}\colon\bR^{U}\rightarrow \bR^{\widehat{u}}$ is the
natural projection of $\bR^{U}$ into $\bR^{\widehat{u}}.$ For any
$\widehat{u},\widehat{v}\in \widehat{U},$ with
$\widehat{u}\subset \widehat{v}$, $\pi_{\widehat{v}\widehat{u}}$
denotes the natural projection of $\bR^{\widehat{v}}$ into
$\bR^{\widehat{u}}$.

As an extension of~\cite[Theorems 3.4, 3.6 and 3.7]{BNMS06a}, we have
that $\cL(X) \in \ID(\bR^{U})$ if and only if
$\cL(X_{ \widehat{u} }) \in \ID(\bR^{\widehat{u}})$ for any
$\widehat{u} \in \widehat{U}$. An analogous statement applies for
selfdecomposability and for the Urbanik classes. Moreover, the field
$X$ has associated a consistent system of characteristic triplets in
the sense of the following proposition:

\begin{proposition}
  \label{propsystemchtrip}
  Let $X=(X_{u})_{u\in U}$ be an \ID field. For any
  $\widehat{u}\in \widehat{U},$ let
  $(\gamma_{\widehat{u}},B_{\widehat{u}},\nu_{\widehat{u}})$ be the
  characteristic triplet of $\cL(X_{\widehat{u} })$.  Then, there are
  unique functions $B\colon U\times U\rightarrow \bR$ and
  $\Gamma \colon U\rightarrow \bR,$ such that
  $\gamma_{\widehat{u}}=\pi_{\widehat{u}}(\Gamma)$ and
  $B_{\widehat{u}}=(B(u,v))_{u,v\in \widehat{u}}$. In addition, we
  have
  \begin{equation}
    \nu_{\widehat{u}}
    =\nu_{\widehat{v}}\circ\pi_{\widehat{v}\widehat{u}}^{-1},
    \qquad \text{on $\cB(\bR^{\widehat{u}}\setminus 0^{\widehat{u}})$,
      for any $\widehat{u},\widehat{v}\in \widehat{U}$
      and $\widehat{u}\subset \widehat{v}$}.
    \label{eqn1.0}
  \end{equation}
  Reciprocally, given functions $B$ and $\Gamma$ as before and a
  collection of L\'{e}vy measures satisfying \eqref{eqn1.0}, there
  exists a unique (in law) field $X$ having characteristic triplets
  $(\gamma_{\widehat{u}},B_{\widehat{u}},\nu_{\widehat{u}})$.
\end{proposition}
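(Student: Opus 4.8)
The plan is to exploit the Kolmogorov-type consistency already available for infinitely divisible fields and to translate the consistency of the finite-dimensional \emph{laws} into consistency of their characteristic triplets. Concretely, for $\widehat u\subset\widehat v$ the marginal $X_{\widehat u}=\pi_{\widehat v\widehat u}(X_{\widehat v})$, so $\cL(X_{\widehat u})=\cL(X_{\widehat v})\circ\pi_{\widehat v\widehat u}^{-1}$; applying the classical formula for the characteristic triplet of a linear image of an \ID law on $\bR^{\widehat v}$ under the surjection $\pi_{\widehat v\widehat u}$ yields explicit relations linking $(\gamma_{\widehat u},B_{\widehat u},\nu_{\widehat u})$ to $(\gamma_{\widehat v},B_{\widehat v},\nu_{\widehat v})$. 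For the Gaussian part this gives $B_{\widehat u}=\pi_{\widehat v\widehat u}B_{\widehat v}\pi_{\widehat v\widehat u}^{\ast}$, i.e.\ $B_{\widehat u}$ is the corresponding submatrix of $B_{\widehat v}$; for the L\'evy measure it gives $\nu_{\widehat u}=\nu_{\widehat v}\circ\pi_{\widehat v\widehat u}^{-1}$ restricted to $\bR^{\widehat u}\setminus\{0^{\widehat u}\}$ (the restriction is needed because the pushforward may put mass at $0^{\widehat u}$ coming from vectors in $\bR^{\widehat v}$ supported off the dropped coordinates); and for the drift one gets $\gamma_{\widehat u}=\pi_{\widehat v\widehat u}(\gamma_{\widehat v})$ up to a correction term produced by the discrepancy between the truncation functions $\tau_{\#\widehat v}$ and $\tau_{\#\widehat u}$. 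Because our truncation function acts coordinatewise, $\tau_{\#\widehat u}\circ\pi_{\widehat v\widehat u}=\pi_{\widehat v\widehat u}\circ\tau_{\#\widehat v}$ exactly, so that correction term vanishes and we obtain cleanly $\gamma_{\widehat u}=\pi_{\widehat v\widehat u}(\gamma_{\widehat v})$ and \eqref{eqn1.0}.

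Next I would construct $B$ and $\Gamma$ and prove uniqueness. For $\{u,v\}\in\widehat U$ set $B(u,v)\coloneqq B_{\{u,v\}}(u,v)$ (with $B(u,u)\coloneqq B_{\{u\}}$), and $\Gamma(u)\coloneqq\gamma_{\{u\}}$; the compatibility relations just established guarantee that for any $\widehat u$ the matrix $(B(u,v))_{u,v\in\widehat u}$ coincides with $B_{\widehat u}$ and the vector $(\Gamma(u))_{u\in\widehat u}$ with $\gamma_{\widehat u}$, since any two coordinates of $\widehat u$ embed in the two-point set and any single coordinate in the one-point set. Symmetry and non-negative definiteness of each $B_{\widehat u}$ are inherited, so $B$ is a well-defined symmetric positive-semidefinite kernel on $U\times U$. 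Uniqueness of $B$ and $\Gamma$ is immediate because their values are forced by the one- and two-dimensional marginals, and uniqueness of a consistent family of L\'evy measures in the sense of \eqref{eqn1.0} is clear since each $\nu_{\widehat u}$ is determined as a marginal.

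For the converse, given $B$, $\Gamma$ and a family $(\nu_{\widehat u})_{\widehat u\in\widehat U}$ satisfying \eqref{eqn1.0}, I would define for each $\widehat u\in\widehat U$ the \ID law $\mu_{\widehat u}$ on $\bR^{\widehat u}$ with characteristic triplet $((\Gamma(u))_{u\in\widehat u},(B(u,v))_{u,v\in\widehat u},\nu_{\widehat u})$ via the L\'evy--Khintchine formula, and then verify the Kolmogorov consistency conditions $\mu_{\widehat u}=\mu_{\widehat v}\circ\pi_{\widehat v\widehat u}^{-1}$ for $\widehat u\subset\widehat v$. This is checked at the level of characteristic functions: plugging the triplet of $\mu_{\widehat v}$ into the L\'evy--Khintchine exponent, evaluating at $\theta\circ\pi_{\widehat v\widehat u}$ with $\theta\in\bR^{\widehat u}$, and using the coordinatewise form of the truncation together with \eqref{eqn1.0}, one recovers exactly the exponent of $\mu_{\widehat u}$. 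Kolmogorov's extension theorem then produces a field $X$ on $\bR^U$ with these finite-dimensional marginals, and $X$ is \ID because all its finite-dimensional marginals are \ID (by the characterisation recalled just before the proposition); uniqueness in law is part of Kolmogorov's theorem.

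The only genuinely delicate point is the behaviour of the L\'evy measure under projection, specifically the fact that the pushforward $\nu_{\widehat v}\circ\pi_{\widehat v\widehat u}^{-1}$ is, in general, not itself a L\'evy measure on $\bR^{\widehat u}$ until one discards its mass on $\{0^{\widehat u}\}$; this is why \eqref{eqn1.0} is phrased as an identity on $\cB(\bR^{\widehat u}\setminus 0^{\widehat u})$ rather than on all of $\cB(\bR^{\widehat u})$. Keeping this restriction consistent through the forward direction (where it appears automatically from the standard image formula for \ID laws) and through the converse (where one must check that the finite-dimensional laws so defined are genuinely \ID, i.e.\ that each $\nu_{\widehat u}$ integrates $1\wedge|x|^2$) is the main bookkeeping obstacle; everything else is a routine consequence of the coordinatewise truncation and Kolmogorov's extension theorem.
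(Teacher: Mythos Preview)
Your argument is correct. The paper actually states this proposition without proof, presenting it as an extension of \cite[Theorems~3.4, 3.6 and 3.7]{BNMS06a}, so there is no in-paper proof to compare against; your approach is exactly the standard one and is what the authors evidently had in mind. The crucial observation you single out---that the truncation $\tau_n(x_1,\dots,x_n)=(x_i/(1\vee|x_i|))_{i=1}^n$ acts coordinatewise, so that $\tau_{\#\widehat u}\circ\pi_{\widehat v\widehat u}=\pi_{\widehat v\widehat u}\circ\tau_{\#\widehat v}$ and hence the drift correction under projection vanishes---is precisely the reason the paper adopts this particular truncation, and it is used tacitly elsewhere (e.g.\ in the proof of Proposition~\ref{propmasterlevmeas}). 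Your handling of the L\'evy-measure restriction to $\cB(\bR^{\widehat u}\setminus 0^{\widehat u})$ matches the paper's own Remark~\ref{remarropsystemchtrip}.
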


\begin{remark}
  \label{remarropsystemchtrip}
  Observe that \eqref{eqn1.0} only holds on
  $\cB(\bR^{\widehat{u}}\setminus 0^{\widehat{u}}) $ .  Indeed, since
  in general
  $\nu_{\widehat{v}}\circ \pi_{\widehat{v} \widehat{u}}^{-1}(\{ 0^{\widehat{u}}\}) \neq 0$,
  $\nu_{\widehat{v}}\circ \pi_{\widehat{v}\widehat{u}}^{-1}$ could
  have an atom in the origin of $\bR^{\widehat{u}}$, consequently
  $\nu_{\widehat{u}}$ and
  $\nu_{\widehat{v}}\circ \pi_{\widehat{v}\widehat{u} }^{-1}$ coincide
  only outside of a neighborhood of zero.
\end{remark}

From Proposition~\ref{propsystemchtrip} and
Remark~\ref{remarropsystemchtrip} , we have that
$(\nu_{\widehat{u}})_{\widehat{u}\in \widehat{U}}$ does not form a
projective system of measures, so in general it is not possible to
extend $(\nu_{\widehat{u}})_{\widehat{u}\in \widehat{U}}$ to a unique
measure on $\cB(\bR)^{U},$ the cylindrical $\sigma $-algebra of $U$,
by standard arguments. But even when it is possible, such measure
could not in general be $\sigma$-finite, mainly because
$\{0^{U}\}\notin \cB(\bR)^{U}$ when $U$ is uncountable. This was
already pointed out in~\cite{Ros07a,Ros07b,Ros08,Ros13}

From now on, we will assume that $U$ is uncountable. The countable
case is well known. In view of the pointed out before, we introduce
the concept of a measure that does not charge zero.

\begin{definition}
  Let $U$ be an arbitrary index set. A measure $\nu$ on
  $\cB(\bR)^{U}$, the cylindrical $\sigma$-algebra of $\bR^{U},$
  \emph{does not charge zero} if there exists $U_{0}\subset U$
  countable, such that
  \begin{equation}
    \nu(\pi_{U_{0}}^{-1}(0^{U_{0}}))=0.
    \label{eqn1.3.6}
  \end{equation}
\end{definition}

With all the notation above, we are now ready to present one of the
main results that is going to be fundamental for the rest of the
paper:

\begin{theorem} [Rosi\'{n}ski 2013]
  \label{thmmasterlevymeasure}
  Let $X=(X_{u})_{u\in U}$ be an \ID field with
  $(\gamma_{\widehat{u}},B_{\widehat{u}},\nu_{\widehat{u}})$ being its
  system of characteristic triplets. Then there are
  $B\colon U\times U\rightarrow \bR$ and
  $\Gamma \colon U\rightarrow \bR$ unique functions, such that
  $\gamma_{\widehat{u}}=\pi_{\widehat{u}}(\Gamma)$ and
  $B_{\widehat{u}}=(B(u,v))_{u,v\in \widehat{u}}$. Additionally there
  is a measure on $(\bR^{U},\cB(\bR)^{U})$ such that
  \begin{equation}
    \nu_{\widehat{u}}(A)=\nu \circ \pi_{\widehat{u}}^{-1}(A),
    \qquad A\in \cB(\bR^{\widehat{u}}\setminus 0^{\widehat{u}}),
    \widehat{u}\in \widehat{U},
    \label{eqn1.3.4}
  \end{equation}
  and
  \begin{equation}
    \int_{\bR^{U}}1\wedge |\pi_{u}(x)|^{2}\nu(dx)<\infty ,\qquad
    u\in U.  \label{eqn1.3.7}
  \end{equation}
  If $\nu$ does not charge zero, then $\nu$ is the unique measure that
  doesn't charge zero for which \eqref{eqn1.3.4} holds.
\end{theorem}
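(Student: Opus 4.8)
The plan is to construct the master Lévy measure $\nu$ as a projective-type limit of the finite-dimensional Lévy measures $(\nu_{\widehat u})_{\widehat u\in\widehat U}$, carefully accounting for the atoms at the origin that Remark~\ref{remarropsystemchtrip} warns about. The existence and uniqueness of $B$ and $\Gamma$ follow verbatim from Proposition~\ref{propsystemchtrip}, so the substantive work is entirely about $\nu$. First I would fix, for each $u\in U$ and each $\varepsilon>0$, the set $C_{u,\varepsilon}\eqdef\{x\in\bR^U:|\pi_u(x)|>\varepsilon\}$, and observe that the restricted measures $\nu_{\widehat u}|_{\{|\pi_u|>\varepsilon\}}$, as $\widehat u$ ranges over finite subsets containing $u$, do form a genuine projective system on these sets: by \eqref{eqn1.0} in Proposition~\ref{propsystemchtrip}, for $\widehat u\subset\widehat v$ the measure $\nu_{\widehat v}\circ\pi_{\widehat v\widehat u}^{-1}$ agrees with $\nu_{\widehat u}$ on $\cB(\bR^{\widehat u}\setminus 0^{\widehat u})$, and on the set $\{|\pi_u|>\varepsilon\}$, which is bounded away from the origin in the $u$-coordinate, no atom at $0^{\widehat u}$ interferes. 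Each such restricted measure is finite by the integrability condition $\int 1\wedge|\pi_u(x)|^2\,\nu_{\widehat u}(dx)<\infty$ built into the Lévy-Khintchine triplet. Applying Kolmogorov's extension theorem (in its measure-theoretic form, valid because the cylinder sets generating $\cB(\bR)^U$ restricted to $C_{u,\varepsilon}$ form a consistent family of finite measures), I obtain a measure $\nu_{u,\varepsilon}$ on $(C_{u,\varepsilon},\cB(\bR)^U\cap C_{u,\varepsilon})$ with $\nu_{u,\varepsilon}\circ\pi_{\widehat u}^{-1}=\nu_{\widehat u}$ on $\{|\pi_{\widehat u}(\cdot)|$ has its $u$-coordinate $>\varepsilon\}$ for every finite $\widehat u\ni u$.

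Next I would glue these pieces together. The sets $C_{u,\varepsilon}$ increase as $\varepsilon\downarrow 0$ and their union over all $u\in U$ and $\varepsilon>0$ is $\bR^U\setminus\bigcap_{u\in U}\{\pi_u=0\}$; moreover the family $\{\nu_{u,\varepsilon}\}$ is consistent under restriction (two members agree on the intersection of their domains, again because away from where some coordinate vanishes the atom issue disappears and \eqref{eqn1.0} applies). Hence there is a well-defined measure $\nu$ on the union, which I extend by $\nu(\bigcap_{u}\{\pi_u=0\})\eqdef 0$ to all of $(\bR^U,\cB(\bR)^U)$ — note $\bigcap_u\{\pi_u=0\}$ need not be measurable when $U$ is uncountable, so more precisely I define $\nu(A)\eqdef\sup_{u,\varepsilon}\nu_{u,\varepsilon}(A\cap C_{u,\varepsilon})=\lim \nu(A\cap C_{u,\varepsilon})$ along the directed set, which is a bona fide measure by monotone convergence. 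Property \eqref{eqn1.3.4} is then immediate: for $A\in\cB(\bR^{\widehat u}\setminus 0^{\widehat u})$ the cylinder $\pi_{\widehat u}^{-1}(A)$ is contained in $\bigcup_{u\in\widehat u}C_{u,\varepsilon}$ for $\varepsilon$ small enough (since $A$ is bounded away from $0^{\widehat u}$, some coordinate is bounded below on $A$), so $\nu\circ\pi_{\widehat u}^{-1}(A)=\nu_{\widehat u}(A)$. Property \eqref{eqn1.3.7} follows by monotone convergence from the corresponding bound for each $\nu_{\{u\}}$: splitting $\{0<|\pi_u|\le 1\}$ into dyadic shells, $\int 1\wedge|\pi_u|^2\,d\nu = \int 1\wedge|x|^2\,\nu_{\{u\}}(dx)<\infty$.

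For the uniqueness clause, suppose $\nu$ does not charge zero, witnessed by a countable $U_0\subset U$ with $\nu(\pi_{U_0}^{-1}(0^{U_0}))=0$, and let $\nu'$ be another such measure (with its own countable witness $U_0'$). Enlarging both to $U_1\eqdef U_0\cup U_0'$, which is still countable, both $\nu$ and $\nu'$ vanish on $\pi_{U_1}^{-1}(0^{U_1})$. On the complement $\bR^U\setminus\pi_{U_1}^{-1}(0^{U_1})=\bigcup_{u\in U_1}\{\pi_u\ne 0\}=\bigcup_{u\in U_1,\,n\ge 1}C_{u,1/n}$ the two measures agree: on each $C_{u,1/n}$ they are both determined, via \eqref{eqn1.3.4} and a monotone class / $\pi$-$\lambda$ argument, by their common values $\nu_{\widehat u}$ on cylinders over finite $\widehat u\ni u$ restricted to $\{|\pi_u|>1/n\}$, and these cylinders generate the trace $\sigma$-algebra on $C_{u,1/n}$. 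Since $C_{u,1/n}$ increase to a set carrying all the mass of both $\nu$ and $\nu'$, we conclude $\nu=\nu'$. I expect the main obstacle to be the measurability bookkeeping around $\bigcap_{u\in U}\{\pi_u=0\}$ and the careful verification that the projective family on each $C_{u,\varepsilon}$ really is consistent — i.e.\ that the origin atoms in \eqref{eqn1.0} genuinely cause no trouble once one coordinate is bounded away from zero — together with checking that Kolmogorov extension applies to finite (rather than probability) measures on this non-standard index structure; the rest is routine.
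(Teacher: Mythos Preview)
Your strategy---restricting to slabs $C_{u,\varepsilon}=\{|\pi_u|>\varepsilon\}$, invoking Kolmogorov extension on each to get a finite measure, and then gluing---is a genuine alternative to the paper's argument. The paper instead defines a pre-measure directly on the ring $\cB_0=\bigcup_{\widehat u}\pi_{\widehat u}^{-1}[\cB(\bR^{\widehat u}\setminus 0^{\widehat u})]$, checks finite additivity, proves continuity at $\emptyset$ by an explicit inner-regularity and compactness argument (reducing to nested compact sets $C_n\subset\bR^{\widehat u_n}$ and applying the diagonal Lemma~\ref{lem:1}), and then applies Carath\'eodory. What your route buys is that the hard compactness step is absorbed into Kolmogorov's theorem as a black box; the price is that the gluing of uncountably many slabs requires its own bookkeeping, which the paper's single Carath\'eodory step avoids. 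The uniqueness arguments are essentially parallel.

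Two points in your write-up need repair. First, the claim ``since $A$ is bounded away from $0^{\widehat u}$, some coordinate is bounded below on $A$'' is false: an arbitrary $A\in\cB(\bR^{\widehat u}\setminus 0^{\widehat u})$ need not lie at positive distance from the origin (take $A=\{y:0<|y|<1\}$), so $\pi_{\widehat u}^{-1}(A)$ is not contained in any $\bigcup_{u\in\widehat u}C_{u,\varepsilon}$. The fix is routine---intersect $A$ with shells $\{\max_{u\in\widehat u}|y_u|>1/n\}$ and pass to the limit---but it must be said. Second, and more structurally, the family $\{C_{u,\varepsilon}\}_{u\in U,\,\varepsilon>0}$ is \emph{not} directed under inclusion: for $u\ne u'$ there is no $C_{u'',\varepsilon''}$ containing $C_{u,\varepsilon}\cup C_{u',\varepsilon'}$, so your ``$\lim$ along the directed set'' is not well-posed, and the formula $\nu(A)=\sup_{u,\varepsilon}\nu_{u,\varepsilon}(A\cap C_{u,\varepsilon})$ is not obviously countably additive (one has $\sup\sum\le\sum\sup$ but not the reverse without monotonicity). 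The remedy is to index by finite sets: put $C_{\widehat u,\varepsilon}=\bigcup_{u\in\widehat u}C_{u,\varepsilon}$; then $C_{\widehat u,\varepsilon}\cup C_{\widehat v,\delta}\subset C_{\widehat u\cup\widehat v,\min(\varepsilon,\delta)}$, the family is directed, Kolmogorov still applies on each $C_{\widehat u,\varepsilon}$ (the restricted marginals $\nu_{\widehat w}|_{\{\max_{u\in\widehat u}|\pi_u|>\varepsilon\}}$ for $\widehat w\supset\widehat u$ are finite and consistent), and the supremum becomes an honest increasing limit. With these two corrections your route goes through.
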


A proof of this theorem is presented in the appendix.

\begin{remark}
  In general, the measure $\nu$ in Theorem~\ref{thmmasterlevymeasure}
  is not unique. It is mainly because $\nu$ may not be
  $\sigma $-finite. This issue has been already pointed out
  in~\cite{Ros07a,Ros07b,Ros08,Ros13}. At this point the concept of a
  measure in the cylindrical $\sigma$-field that does not charge zero
  plays an important role. However, the uniqueness can be obtained
  without condition \eqref{eqn1.3.6}.
\end{remark}

From the preceding proposition, for a given measure $\nu$ satisfying
\eqref{eqn1.3.4} and \eqref{eqn1.3.7} we can construct a consistent
system of L\'{e}vy measures by putting
$\nu_{\widehat{u}}({} \cdot{}) =\nu [ \pi_{\widehat{u}}^{-1}({}\cdot\nobreak {}\nobreak \setminus\nobreak 0^{\widehat{u}}) ] .$
If in addition, we consider functions
$B\colon U\times U\rightarrow \bR$ and
$\Gamma \colon U\rightarrow \bR$, such that for any
$\widehat{ u}\in \smash{\widehat{U}}$,
$\Gamma_{\widehat{u}}\coloneqq \pi_{\widehat{u}}^{-1}( \Gamma) \in \bR^{\widehat{u}}$
and $B_{\widehat{u}}=(B(u,v) )_{u,v\in \widehat{u}}$ is non-negative
definite, then there is a unique (in law) \ID field $X$ having
characteristic triplets
$(\Gamma_{\widehat{u}},B_{\widehat{u}},\nu_{ \widehat{u}}) .$ This
remark induces naturally the following definition:

\begin{definition}
  Let
  $( \Gamma_{\widehat{u}},B_{\widehat{u}},\nu_{\widehat{u}})_{\widehat{u}\in \widehat{U}}$
  be the system of characteristic triplets associated to the \ID field
  $X=(X_{u})_{u\in U}$. A measure $\nu$ on $\cB(\bR)^{U},$ the
  cylindrical $\sigma$-algebra of~$\bR^{U},$ is said to be \emph{a
    pseudo master L\'{e}vy measure} of $X$ if \eqref{eqn1.3.4} and
  \eqref{eqn1.3.7} hold. If in addition, $\nu$ does not charge zero,
  we say that $\nu$ is \emph{the master L\'{e}vy measure} of $X. $ In
  this case, we refer to $(\Gamma ,B,\nu)$ as the \emph{characteristic
    triplet} of $X$. When we write that $X$ has characteristic triplet
  $(\Gamma ,B,\nu)$, we are going to assume that $\nu$ does not charge
  zero.
\end{definition}

The following L\'{e}vy-It\^{o} representation follows easily from
Theorem~\ref{thmmasterlevymeasure} and, like
Theorem~\ref{thmmasterlevymeasure}, it was introduced
in~\cite{Ros07a,Ros07b,Ros08,Ros13}.

\begin{proposition}[Rosi\'{n}ski 2013]
  Let $X=(X_{u})_{u\in U}$ be an infinitely divisible field with
  characteristic triplet $(\Gamma ,B,\nu)$. Then the field
  \begin{equation*}
    \widetilde{X}_{u}\coloneqq \pi_{u}(\Gamma)+W_{u}+\int_{\bR^{U}}\pi_{u}(x)[ N(dx)-\indf_{\{|\pi_{u}(x)|\leq 1\}}\nu(dx)],\qquad u\in U,
  \end{equation*}
  is well defined and it is a version of $X$. Here $W$ is a centered
  Gaussian process with
  $Cov(X_{\widehat{u}})=(B(u,v))_{u,v\in \widehat{u}}$, for any
  $\widehat{u}\in \widehat{U}$. Further, $N$ is a Poisson random
  measure with intensity $\nu$ and it is independent of $W$.
\end{proposition}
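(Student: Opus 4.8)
The plan is to realize the proposed field $\widetilde{X}$ on a single probability space carrying an independent pair $(W,N)$, check that the compensated Poisson integral defining each $\widetilde{X}_u$ is meaningful, and then show that every finite-dimensional marginal of $\widetilde{X}$ has the characteristic triplet $(\gamma_{\widehat{u}},B_{\widehat{u}},\nu_{\widehat{u}})$ of the corresponding marginal of $X$; since $\widetilde{X}$ is defined coordinatewise, its finite-dimensional distributions are automatically consistent, so this identification forces $\widetilde{X}\overset{d}{=}X$. First I would construct $W$: by Proposition~\ref{propsystemchtrip} the matrices $B_{\widehat{u}}=(B(u,v))_{u,v\in\widehat{u}}$ form a consistent family of symmetric non-negative definite matrices, so Kolmogorov's extension theorem yields a centered Gaussian field $W=(W_u)_{u\in U}$ with $\mathrm{Cov}(W_{\widehat{u}})=B_{\widehat{u}}$ for all $\widehat{u}\in\widehat{U}$. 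For $N$ the delicate point — and this is the crux of the whole argument — is that $\nu$ need not be $\sigma$-finite on $\bR^U$. However, since $\nu$ does not charge zero there is a countable $U_0\subset U$ with $\nu(\pi_{U_0}^{-1}(0^{U_0}))=0$, and on the complement $\{x:\pi_{U_0}(x)\neq 0^{U_0}\}=\bigcup_{u\in U_0}\bigcup_{n\geq 1}\{|\pi_u(x)|>1/n\}$ the measure $\nu$ is $\sigma$-finite, because \eqref{eqn1.3.7} forces $\nu(\{|\pi_u(x)|>1/n\})<\infty$. Thus $\nu$ is $\sigma$-finite off a $\nu$-null set, a Poisson random measure $N$ with intensity $\nu$ on $(\bR^U,\cB(\bR)^U)$exists, and $N$ charges $\{x:\pi_{U_0}(x)=0^{U_0}\}$ with probability zero; I would take $W$ and $N$ independent on a common probability space.

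Next I would verify that $\int_{\bR^U}\pi_u(x)[N(dx)-\indf_{\{|\pi_u(x)|\leq 1\}}\nu(dx)]$ is well defined for each fixed $u$. Writing $f_u(x)=\pi_u(x)$, condition \eqref{eqn1.3.7} gives simultaneously $\nu(\{|f_u|>1\})<\infty$ and $\int_{\{|f_u|\leq 1\}}|f_u|^2\,\nu(dx)<\infty$. The first bound makes $\int_{\{|f_u|>1\}}f_u\,N(dx)$ an almost surely finite sum over the finitely many atoms of $N$ falling in $\{|f_u|>1\}$; the second makes $\int_{\{|f_u|\leq 1\}}f_u\,[N(dx)-\nu(dx)]$ exist as the $L^2$-limit of the compensated integrals over $\{1/n<|f_u|\leq 1\}$, each of which is legitimate since $\nu(\{1/n<|f_u|\leq 1\})<\infty$. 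Their sum is the stated integral, it is an infinitely divisible random variable, and by the classical formula its cumulant function equals $\int_{\bR^U}\bigl(e^{i\theta f_u(x)}-1-i\theta\tau_1(f_u(x))\bigr)\,\nu(dx)$.

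Finally I would identify the finite-dimensional laws. Fix $\widehat{u}\in\widehat{U}$. Since for $u\in\widehat{u}$ the coordinate $\pi_u(x)$ depends on $x$ only through $\pi_{\widehat{u}}(x)$, the pushforward $N_{\widehat{u}}\coloneqq N\circ\pi_{\widehat{u}}^{-1}$ is a Poisson random measure whose restriction to $\bR^{\widehat{u}}\setminus 0^{\widehat{u}}$ has intensity $\nu_{\widehat{u}}$ by \eqref{eqn1.3.4}; only this restriction is relevant, as the integrand $y\mapsto y$ vanishes at the origin. Hence $\widetilde{X}_{\widehat{u}}=\pi_{\widehat{u}}(\Gamma)+W_{\widehat{u}}+\int_{\bR^{\widehat{u}}}y\,[N_{\widehat{u}}(dy)-\indf_{\{|y|\leq 1\}}\nu_{\widehat{u}}(dy)]$, and by independence of $W$ and $N$ together with the classical (finite-dimensional) L\'evy--It\^o representation, $\widetilde{X}_{\widehat{u}}$ is infinitely divisible with characteristic triplet $(\gamma_{\widehat{u}},B_{\widehat{u}},\nu_{\widehat{u}})$, i.e.\ that of $X_{\widehat{u}}$; so $\widetilde{X}_{\widehat{u}}\overset{d}{=}X_{\widehat{u}}$. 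As $\widehat{u}$ was arbitrary, $\widetilde{X}$ and $X$ have the same finite-dimensional distributions, so $\widetilde{X}$ is a version of $X$. The main obstacle throughout is exactly the one flagged above: giving meaning to $N$ and to the compensated integral when $\nu$ is not $\sigma$-finite, which is precisely where the hypothesis that $\nu$ does not charge zero is used.
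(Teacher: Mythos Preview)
Your argument is correct and, in fact, more detailed than what the paper offers: the paper does not prove this proposition at all, stating only that ``the following L\'evy--It\^o representation follows easily from Theorem~\ref{thmmasterlevymeasure}'' and citing Rosi\'nski. Your proof supplies exactly the details one would expect behind that sentence: Kolmogorov for $W$, existence of $N$ once $\nu$ is known to be $\sigma$-finite, well-definedness of the compensated integral via \eqref{eqn1.3.7}, and identification of the finite-dimensional triplets through the pushforward $N_{\widehat{u}}$.

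The one place where you are doing real work that the paper has elsewhere is the $\sigma$-finiteness of $\nu$: your argument that the ``does not charge zero'' hypothesis plus \eqref{eqn1.3.7} gives $\sigma$-finiteness is precisely the content of Lemma~\ref{proptype1} in the appendix, so you could simply cite that. A small notational wrinkle: in your final display for $\widetilde{X}_{\widehat{u}}$ you write the truncation as $\indf_{\{|y|\leq 1\}}$ with $y\in\bR^{\widehat{u}}$, but since the original definition truncates each coordinate $\pi_u(x)$ separately, the vector-level truncation is componentwise (matching the paper's $\tau_{\#\widehat{u}}$), not a truncation in a norm on $\bR^{\widehat{u}}$; this is harmless but worth making explicit so that the resulting characteristic triplet is exactly $(\gamma_{\widehat{u}},B_{\widehat{u}},\nu_{\widehat{u}})$ with the paper's choice of truncation.
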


\subsection{Criterion for selfdecomposability of \ID fields}

As in the classical theory, the selfdecomposability of an \ID field
can be characterized via dilations.

\begin{proposition}
  \label{propdilatationsd}
  Let $X=(X_{u})_{u\in U}$ be an infinitely divisible field with
  characteristic triplet $(\Gamma ,B,\nu) $. Then $X$ is
  selfdecomposable if and only if for any $q>1$
  \begin{equation}
    \nu(qA) \leq \nu(A) ,
    \qquad A\in \cB(\bR)^{U}.
    \label{eqn2}
  \end{equation}
\end{proposition}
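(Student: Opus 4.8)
The plan is to transfer the classical dilation criterion for selfdecomposability of $\ID$ distributions on $\bR^{n}$ to the infinite-dimensional setting by working consistently at the level of finite-dimensional projections, using the machinery already assembled in this section (Proposition~\ref{propsystemchtrip}, Theorem~\ref{thmmasterlevymeasure}, and the fact -- stated just before Proposition~\ref{propsystemchtrip} -- that $\cL(X)\in\SD(\bR^{U})$ if and only if $\cL(X_{\widehat{u}})\in\SD(\bR^{\widehat{u}})$ for every $\widehat{u}\in\widehat{U}$). First I would recall the classical fact: an $\ID$ law on $\bR^{n}$ with L\'evy measure $\nu_{n}$, $n<\infty$, is $\SD$ if and only if $\nu_{n}(qA)\le\nu_{n}(A)$ for all $q>1$ and all Borel $A$. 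Given this, the statement to prove essentially becomes a bookkeeping lemma relating the master L\'evy measure $\nu$ on $\cB(\bR)^{U}$ to its projections $\nu_{\widehat{u}}$ and commuting the dilation with the projection maps $\pi_{\widehat{u}}$.

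The key steps, in order, would be as follows. Step 1: reduce to finite dimensions. By the $\SD$-iff-all-finite-dimensional-marginals-are-$\SD$ principle, $X$ is $\SD$ iff each $X_{\widehat{u}}$ is $\SD$, iff (classical criterion) $\nu_{\widehat{u}}(q A)\le\nu_{\widehat{u}}(A)$ for all $q>1$ and all $A\in\cB(\bR^{\widehat{u}}\setminus 0^{\widehat{u}})$ -- note the criterion only needs to be checked away from the origin since $\nu_{\widehat{u}}$ assigns no mass there anyway. Step 2: the commutation identity. For $q>0$ the dilation $D_{q}\colon x\mapsto qx$ on $\bR^{U}$ commutes with each projection, i.e.\ $\pi_{\widehat{u}}\circ D_{q}=D_{q}\circ\pi_{\widehat{u}}$, hence for any cylinder set $A=\pi_{\widehat{u}}^{-1}(A_{0})$ with $A_{0}\in\cB(\bR^{\widehat{u}}\setminus 0^{\widehat{u}})$ we have $qA=\pi_{\widehat{u}}^{-1}(qA_{0})$, and therefore, using \eqref{eqn1.3.4},
\begin{equation*}
  \nu(qA)=\nu\circ\pi_{\widehat{u}}^{-1}(qA_{0})=\nu_{\widehat{u}}(qA_{0}),
  \qquad
  \nu(A)=\nu_{\widehat{u}}(A_{0}).
\end{equation*}
Step 3: conclude. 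If $X$ is $\SD$, then $\nu_{\widehat{u}}(qA_{0})\le\nu_{\widehat{u}}(A_{0})$ for all $\widehat{u}$, so by Step 2, $\nu(qA)\le\nu(A)$ holds for every cylinder set $A$ with base avoiding the origin; since $\cB(\bR)^{U}$ is generated by such cylinders and $\nu$ does not charge zero, a monotone-class / $\pi$-$\lambda$ argument extends the inequality to all $A\in\cB(\bR)^{U}$. Conversely, if \eqref{eqn2} holds, specializing to cylinder sets and invoking Step 2 gives $\nu_{\widehat{u}}(qA_{0})\le\nu_{\widehat{u}}(A_{0})$ for every finite $\widehat{u}$ and every Borel $A_{0}$ bounded away from $0^{\widehat{u}}$, whence each $X_{\widehat{u}}$ is $\SD$ by the classical criterion, and so $X$ is $\SD$.

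I expect the main obstacle to be Step 3's extension argument: going from the dilation inequality on the generating algebra of cylinder sets to all of $\cB(\bR)^{U}$ is not a routine monotone-class exercise because $\nu$ need not be $\sigma$-finite, so one cannot naively invoke uniqueness-of-measures type theorems; the "does not charge zero" hypothesis is exactly what is needed to control this -- on the complement of $\pi_{U_{0}}^{-1}(0^{U_{0}})$ the relevant traces of $\nu$ and of $\nu\circ D_{q}^{-1}$ are $\sigma$-finite, and one should phrase the comparison $\nu\circ D_{q}^{-1}\le\nu$ as an inequality of the associated (signed/positive) set functions restricted to this $\sigma$-finite part, then pass to the $\lambda$-system generated by the cylinders. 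A secondary, more minor point to handle carefully is the behaviour at the origin: the projected measures $\nu_{\widehat{v}}\circ\pi_{\widehat{v}\widehat{u}}^{-1}$ may acquire an atom at $0^{\widehat{u}}$ (Remark~\ref{remarropsystemchtrip}), so throughout one must restrict test sets to $\cB(\bR^{\widehat{u}}\setminus 0^{\widehat{u}})$ and note that the classical $\SD$ criterion is insensitive to the origin, which is harmless since $\{0^{U}\}$ is not even in $\cB(\bR)^{U}$ for uncountable $U$ and $\nu$ ignores zero by hypothesis.
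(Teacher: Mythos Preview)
Your proposal is correct and follows essentially the same route as the paper's proof: both directions are reduced to the classical finite-dimensional dilation criterion via the projection identity $\nu(qA)=\nu_{\widehat{u}}(qA_{0})$ on cylinder sets, and the extension from cylinders to all of $\cB(\bR)^{U}$ is carried out by a monotone-class argument after using the ``does not charge zero'' hypothesis to obtain $\sigma$-finiteness of $\nu$ (the paper invokes Lemma~\ref{lemma2} and reduces to finite $\nu$, which is exactly the mechanism you anticipate in your discussion of the main obstacle). One small technical point the paper handles explicitly that you should not skip: it first verifies that $qA\in\cB(\bR)^{U}$ whenever $A\in\cB(\bR)^{U}$ and $q>0$, since this is not automatic for the cylindrical $\sigma$-algebra when $U$ is uncountable.
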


\begin{proof}
  Let
  $( \Gamma_{\widehat{u}},B_{\widehat{u}},\nu_{\widehat{u}})_{\widehat{u}\in \widehat{U}}$
  be the system of characteristic triplets associated to $X$.  If
  \eqref{eqn2} holds, then from \eqref{eqn1.3.4}, we have that the
  same expression holds for $\nu_{\widehat{u}}$, which means that
  $\cL(X_{\widehat{u}}) \in \ID(\bR^{ \widehat{u}})$ for any
  $\widehat{u}\in \widehat{U}$, proving thus that $X$ is \SD.

  Now suppose that $X$ is \SD, i.e.\
  $\cL(X_{\widehat{u}}) \in \ID(\bR^{\widehat{u}})$ for any
  $\widehat{u}\in \widehat{U}$. Let us observe that in general
  $qA\notin \cB( \bR)^{U}$, for instance if $q=0$,
  $qA=\{ 0^{U}\} \notin \cB(\bR)^{U}$ if $U$ is uncountable. Thus, we
  firstly verify that for any $q>0$ we have
  $qA\in \cB( \bR)^{U}$. Define
  \begin{equation*}
    \cA_{q}
    \coloneqq \left\{A\in \cB(\bR)^{U}:qA\in \cB(\bR)^{U}\right\}.
  \end{equation*}
  Observe that $\cC$, the set of the cylinders in $\cB(\bR)^{U},$
  belongs to $\cA_{q}$. Moreover, $\cA_{q}$ is a
  $\sigma $-algebra. Indeed, obviously $\bR^{U}\in \cA_{q}.$ Due to
  \begin{align*}
    q(A\cap B) ={} & qA\cap qB;
    \\
    q(A\cup B) ={} & qA\cup qB,
  \end{align*}
  it follows easily that $\cA_{q}$ is closed under complements and
  countable unions. This shows that $qA\in \cB( \bR)^{U}$ for any
  $q>0$ and $A\in \cB( \bR)^{U}$. To prove \eqref{eqn2}, fix $q>1$ and
  define
  \begin{equation*}
    \cA_{q}^{\nu }
    \coloneqq \left\{ A\in \cB(\bR)^{U}:\nu(qA) \leq \nu(A) \right\} .
  \end{equation*}
  Thanks to Lemma~\ref{lemma2} (see the appendix) and the Monotone
  Class Theorem, we only need to check that
  $\cB_{0}\cup \{ \bR^{U}\} \subset \cA_{q}^{\nu }$, where $\cB_{0}$
  is as in~\ref{lemma2}. In view that $\nu$ is the master L\'{e}vy
  measure of $X$, we have that $\nu$ does not charge zero, or
  equivalently, it is $\sigma$-finite, so without loss of generality
  we can assume that $\nu$ is finite.  Clearly
  $\bR^{U}\in \cA_{q}^{\nu }$. Moreover, by consistency and equation
  \eqref{eqn1.3.8} we see that for any $A_{0}\in \cB_{0}$
  \begin{align*}
    \nu(qA_{0}) ={} & \nu [ q\pi_{\widehat{u}}^{-1}(A\setminus 0^{\widehat{u}}) ]
    \\
    \leq{}          & \nu [ \pi_{\widehat{u}}^{-1}(qA\setminus 0^{\widehat{u} }) ]
    \\
    ={}             & \nu_{\widehat{u}}(qA)
    \\
    \leq{}          & \nu_{\widehat{u}}(A)
    \\
    ={}             & \nu(\pi_{\widehat{u}}^{-1}(A\setminus 0^{\widehat{u} }))
    \\
    ={}             & \nu(A_{0}) .
  \end{align*}
  provided that
  $A_{0}=\pi_{\widehat{u}}^{-1}(A\setminus 0^{\widehat{u} })$ for some
  $\widehat{u}\in \widehat{U}$ and $A\in \cB(\bR^{\widehat{u}})$. The
  inequality above follows from the selfdecomposability of the
  finite-dimensional distributions, i.e.\ the system of
  finite-dimensional L\'{e}vy measures
  $\{ \nu_{F}\}_{\widehat{ u}\in \widehat{U}}$ fulfills \eqref{eqn2}
  on $\cB(\bR^{ \widehat{u}})$ for any
  $\widehat{u}\in \widehat{U}$. Therefore
  $\cB_{0}\cup \{ \bR^{U}\} \subset \cA_{c}^{\nu }$. This completes
  the proof.
\end{proof}

\begin{remark}
  \label{sdnonsigfin}
  From the proof of the previous proposition, if there is a pseudo
  master L\'{e}vy measure (i.e.\ it may not fulfill \eqref{eqn1.3.6})
  of $X$ satisfying \eqref{eqn2}, then the process $X$ is
  selfdecomposable.  However, the reverse may not be true in general.
\end{remark}

\section{Selfdecomposability of Volterra fields}
\label{section4}

In this section we study the selfdecomposability of \ID Volterra
fields induced by a L\'{e}vy basis. In particular, we show that under
some conditions on the kernel, the selfdecomposability of the field is
equivalent to the selfdecomposability of the L\'{e}vy basis.

Let $(L_{t})_{t\in \bR}$ be a $\bR^{n}$-valued two-sided L\'{e}vy
process and $f$ a measurable function. It is well known that the
mapping $\cL(L_{1})\mapsto \cL(\int_{\bR}f(s)dL_{s})$ is not in
general one-to-one (e.g.~\cite{BNRT08}). Note that
$\cL(\int_{\bR}f(s)dL_{s})$ corresponds to the marginal distribution
of the stationary process $X_{u}\coloneqq \int_{\bR}f(u-s)dL_{s}$
. There are several important classes of infinitely divisible
distributions that can be characterized using such mapping. Perhaps
the most important example corresponds to the class $\SD(\bR^{n})$ of
selfdecomposable distributions on $\bR^{n}$. In this case
$\cL(L_{1})\mapsto \cL(\int_{0}^{\infty }e^{-s}dL_{s})$ creates a
bijection between the class of \ID distributions on $\bR^{n}$ whose
L\'{e}vy measure has log-moment outside of zero and the class
$\SD(\bR^{n})$. Moreover, $\cL(\int_{0}^{\infty }e^{-s}dL_{s})$ is the
marginal distribution of a stationary OU process driven by
$L$. Observe that in this case
$\cL(\int_{0}^{\infty }e^{-s}dL_{s})\in \SD(\bR^{n})$ even if
$\cL(L_{1})\notin \SD(\bR^{n})$. Nevertheless, as it has been shown
in~\cite[Theorem 3.4]{BNMS06a}, this is not true for \SD fields, e.g.\
the OU process is \SD if and only if $L$ is \SD as well. We generalize
such result for \ID Volterra fields.

\subsection{The master L\'{e}vy measure of an \ID Volterra field}

In this part we investigate the master L\'{e}vy measure of certain
class of infinitely divisible fields which can be expressed in terms
of stochastic integrals, namely Volterra fields.

For the rest of the section $p\geq 0$ is such that
$\bE(|L(A)|^{p})<\infty$ for all $A\in \cR$. Recall that the
stochastic integral $\int_{\cS}f(s)L(ds)$ is well defined and has
finite $p$-moment if and only if $f\in \cL_{\Phi_{p}}.$ Here
$\cL_{\Phi_{0}}$ is the space of $L$-integrable functions. Let $L$ be
a Poissonian L\'{e}vy basis on $(\cS,\cR)$ with quadruplet
$(\gamma(s),0,\rho (s,dx),c(ds)).$ An \ID \emph{Volterra field} driven
by $L$ is a field
\begin{equation}
  X_{u}\coloneqq \int_{\cS}f(u,s)L(ds),
  \qquad u\in U,
  \label{eqn1}
\end{equation}
where $U$ is a separable space and $f\colon U\times \cS\rightarrow\bR$
a measurable function such that $f(u,\nullarg)\in \cL_{\Phi_{0}}$ for
all $u\in U$. Note that the expression in \eqref{eqn1} is also called
spectral representation of an infinitely divisible process. The next
proposition describes the master L\'{e}vy measure of $X$. Recall that
a function $g\colon U\rightarrow \bR$ is lower (upper) continuous if
$\lim \inf_{u\rightarrow u_{0}}g(u)\geq g(u_{0})$
($\lim \sup_{u\rightarrow u_{0}}g(u)\leq g(u_{0})$) for any
$u_{0}\in U$.

\begin{proposition}
  \label{propmasterlevmeas2}
  Let $X$ be as in \eqref{eqn1} with $L$ a Poissonian L\'{e}vy basis
  with quadruplet $(\gamma(s),0,\rho (s,dx),c(ds))$. Define
  \begin{equation}
    \nu \coloneqq \eta \circ g^{-1},
    \label{eqn1.3.5}
  \end{equation}
  where $g\colon\bR\times \cS\rightarrow\bR^{U}$ is the function
  defined as $g_{u}(x,s)=xf(u,s),$ $u\in U$ and
  $\eta (dxds)=\rho(s,dx)c(ds)$. Suppose that $f(\nullarg ,s)$ is
  non-identically zero and lower or upper continuous for $c$-almost
  all $s\in \cS$. Then $\nu$ as in \eqref{eqn1.3.5} is the master
  L\'{e}vy measure of $X$.
\end{proposition}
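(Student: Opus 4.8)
The strategy is to verify, in turn, the conditions that by the definition of the master L\'{e}vy measure (given after Theorem~\ref{thmmasterlevymeasure}) single out $\nu\coloneqq\eta\circ g^{-1}$: that $\nu$ is a well-defined measure on the cylindrical $\sigma$-algebra $\cB(\bR)^{U}$, that $\nu$ is a pseudo master L\'{e}vy measure of $X$ (i.e.\ that \eqref{eqn1.3.4} and \eqref{eqn1.3.7} hold), and that $\nu$ does not charge zero; the uniqueness clause of Theorem~\ref{thmmasterlevymeasure} then identifies $\nu$ with the master L\'{e}vy measure of $X$. The first point is routine: each coordinate map $(x,s)\mapsto g_{u}(x,s)=xf(u,s)$ is $\cB(\bR)\otimes\cB_{\cS}$-measurable because $f(u,\nullarg)$ is, hence $g$ is measurable into $(\bR^{U},\cB(\bR)^{U})$ and the image measure $\eta\circ g^{-1}$ is a well-defined measure on $\cB(\bR)^{U}$.

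To get \eqref{eqn1.3.4} and \eqref{eqn1.3.7} I would first read off the finite-dimensional characteristic triplets of $X$. For $\widehat{u}\in\widehat{U}$ put $f_{\widehat{u}}(s)\coloneqq(f(u,s))_{u\in\widehat{u}}$; then for every $\theta\in\bR^{\widehat{u}}$ one has $\langle\theta,X_{\widehat{u}}\rangle=\int_{\cS}\langle\theta,f_{\widehat{u}}(s)\rangle L(ds)$, and the scalar cumulant identity \eqref{eqn0.1} together with the fact that $L$ is Poissonian gives the characteristic exponent of $\langle\theta,X_{\widehat{u}}\rangle$ in closed form. Since the L\'{e}vy triplet of an \ID random vector is determined by the laws of its one-dimensional projections, it follows that $X_{\widehat{u}}$ has vanishing Gaussian component and L\'{e}vy measure obtained by pushing $\eta(dx\,ds)=\rho(s,dx)c(ds)$ forward under $(x,s)\mapsto x f_{\widehat{u}}(s)$ and discarding the atom at the origin (this atom, produced by $\{s:f_{\widehat{u}}(s)=0\}$, is invisible to the L\'{e}vy--Khintchine formula in any case). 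As $(x,s)\mapsto x f_{\widehat{u}}(s)$ equals $\pi_{\widehat{u}}\circ g$, this reads $\nu_{\widehat{u}}=\nu\circ\pi_{\widehat{u}}^{-1}$ on $\cB(\bR^{\widehat{u}}\setminus 0^{\widehat{u}})$, which is \eqref{eqn1.3.4}. Condition \eqref{eqn1.3.7} then follows by change of variables: $\int_{\bR^{U}}1\wedge|\pi_{u}(x)|^{2}\,\nu(dx)=\int_{\cS}\int_{\bR}1\wedge|xf(u,s)|^{2}\,\rho(s,dx)\,c(ds)$, and the last expression is bounded by $\int_{\cS}\Phi_{0}(|f(u,s)|,s)\,c(ds)<\infty$ since $b\equiv 0$ and $f(u,\nullarg)\in\cL_{\Phi_{0}}$, cf.\ \eqref{eqn1.3.2} and \eqref{eqn0.2}.

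There remains the step that $\nu$ does not charge zero, which is where separability of $U$ and the semicontinuity hypothesis come in. Using separability, fix a countable dense set $U_{0}\subset U$. By the image-measure formula, $\nu(\pi_{U_{0}}^{-1}(0^{U_{0}}))=\eta(\{(x,s):xf(u,s)=0\text{ for all }u\in U_{0}\})$, and the latter set is contained in $(\{0\}\times\cS)\cup(\bR\times N_{U_{0}})$, where $N_{U_{0}}\coloneqq\{s\in\cS:f(u,s)=0\text{ for all }u\in U_{0}\}$. Now $\eta(\{0\}\times\cS)=\int_{\cS}\rho(s,\{0\})\,c(ds)=0$ because each $\rho(s,\nullarg)$ is a L\'{e}vy measure, so it is enough to prove $c(N_{U_{0}})=0$. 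The point is to show that for $c$-almost every $s$ the set $\{u\in U:f(u,s)\neq 0\}$ has non-empty interior in $U$ and therefore meets the dense set $U_{0}$, so that $s\notin N_{U_{0}}$; here one uses that $f(\nullarg,s)$ is non-identically zero together with its lower or upper continuity (e.g.\ if $f(\nullarg,s)$ is lower continuous and attains a strictly positive value then $\{u:f(u,s)>0\}$ is open and non-empty, the other sign patterns being handled in the same spirit). This makes $N_{U_{0}}$ a $c$-null set, $\nu$ does not charge zero, and consequently $\nu$ is the master L\'{e}vy measure of $X$.

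I expect the last paragraph to carry the genuine difficulty: one needs a \emph{single} countable $U_{0}$ that serves for $c$-almost every $s$ simultaneously, so the conclusion $c(N_{U_{0}})=0$ must really be argued uniformly in $s$, and it is precisely at this point that both the separability of $U$ and the (semi)continuity of $f(\nullarg,s)$ are indispensable. A secondary place deserving care is the passage from the scalar identity \eqref{eqn0.1} to the vector integral $X_{\widehat{u}}$, where one should check that the Gaussian part vanishes and that the drift is inherited from $\gamma$ in such a way that the L\'{e}vy measure is read off unambiguously. Once these are in hand, the conclusion is immediate: $\nu$ satisfies \eqref{eqn1.3.4} and does not charge zero, so by Theorem~\ref{thmmasterlevymeasure} it is the master L\'{e}vy measure of $X$, i.e.\ \eqref{eqn1.3.5} holds.
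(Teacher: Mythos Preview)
Your proposal is correct and follows essentially the same route as the paper. The paper factors the verification that $\nu$ is a pseudo master L\'evy measure into a separate result (Proposition~\ref{propmasterlevmeas}), whereas you carry out that computation inline; and for the ``does not charge zero'' step the paper writes $\nu(\pi_{U_0}^{-1}(0^{U_0}))$ as a limit over $\{|x|>1/n\}$ rather than splitting into $\{0\}\times\cS$ and $\bR\times N_{U_0}$ as you do, but the underlying idea---use density of $U_0$ together with the semicontinuity of $f(\nullarg,s)$ to force $N_{U_0}$ to be $c$-null---is identical.
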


For the proof of this proposition we need the next result:

\begin{proposition}
  \label{propmasterlevmeas}
  Let $X$ be as in \eqref{eqn1} with $L$ a Poissonian L\'{e}vy
  basis. Then, for any $\widehat{u}\in \widehat{U},$ $X_{\widehat{u}}$
  has characteristic triplet
  $(\Gamma_{\widehat{u}},0,\nu_{\widehat{u}})$ with
  \begin{equation*}
    \Gamma_{\widehat{u}}
    = \int_{\cS}\bigl\{ \gamma(s)\pi_{\widehat{u}}(f(\nullarg ,s))
    + \int_{\bR}[ \tau_{\#\widehat{u}}[\pi_{\widehat{u}}(xf(\nullarg ,s))]
    - \pi_{\widehat{u}}(f(\nullarg ,s))\tau_{1}(x)] \rho(s,dx)\bigr\} c(ds),
  \end{equation*}
  and
  \begin{equation}
    \nu_{\widehat{u}}
    =\nu \circ \pi_{\widehat{u}}^{-1},
    \qquad\text{on }\cB(\bR^{\widehat{u}}\setminus 0^{\widehat{u}}),
    \label{eqn1.3.5.1}
  \end{equation}
  with $\nu$ given by \eqref{eqn1.3.5}.
\end{proposition}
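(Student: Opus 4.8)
The plan is to compute the characteristic triplet of $X_{\widehat{u}}$ directly from the cumulant formula \eqref{eqn0.1} and the L\'evy-It\^o structure of the driving basis, then read off the three components. First I would fix $\widehat{u}\in\widehat{U}$ and write $X_{\widehat{u}}=\pi_{\widehat{u}}(X)=\int_{\cS}\pi_{\widehat{u}}(f(\nullarg,s))\,L(ds)$ as an $\bR^{\widehat{u}}$-valued stochastic integral. Applying \eqref{eqn0.1} componentwise (or rather, pairing with $\theta\in\bR^{\widehat{u}}$ and using that $\langle\theta,\pi_{\widehat{u}}(f(\nullarg,s))\rangle$ is the scalar integrand against $L$), the cumulant function of $X_{\widehat{u}}$ equals $\int_{\cS}\psi(\langle\theta,\pi_{\widehat{u}}(f(\nullarg,s))\rangle,s)\,c(ds)$ with $\psi$ as in \eqref{eqn1.1} and $b\equiv0$ because $L$ is Poissonian. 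Expanding $\psi$, the Gaussian part vanishes, which gives $B_{\widehat{u}}=0$; the drift term contributes $\int_{\cS}\gamma(s)\langle\theta,\pi_{\widehat{u}}(f(\nullarg,s))\rangle\,c(ds)$, and the jump term contributes $\int_{\cS}\int_{\bR}[e^{i\langle\theta,\pi_{\widehat{u}}(xf(\nullarg,s))\rangle}-1-i\langle\theta,\pi_{\widehat{u}}(f(\nullarg,s))\rangle\tau_{1}(x)]\rho(s,dx)\,c(ds)$.

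Next I would convert this to the canonical L\'evy-Khintchine form for $\bR^{\widehat{u}}$, which uses the truncation function $\tau_{\#\widehat{u}}$ rather than the ``$f$ times $\tau_{1}(x)$'' compensator that arises naturally from pushing forward the scalar integral. The standard device is to add and subtract: write $i\langle\tau_{1}(x)\pi_{\widehat{u}}(f(\nullarg,s)),\theta\rangle = i\langle\tau_{\#\widehat{u}}(\pi_{\widehat{u}}(xf(\nullarg,s))),\theta\rangle - i\langle\tau_{\#\widehat{u}}(\pi_{\widehat{u}}(xf(\nullarg,s)))-\tau_{1}(x)\pi_{\widehat{u}}(f(\nullarg,s)),\theta\rangle$ inside the integral. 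The first piece combines with the exponential to give the canonical jump integral with compensator $\tau_{\#\widehat{u}}$; the correction term $\tau_{\#\widehat{u}}(\pi_{\widehat{u}}(xf(\nullarg,s)))-\pi_{\widehat{u}}(f(\nullarg,s))\tau_{1}(x)$ is a bounded, compactly-supported-away-from-the-difficulty quantity (bounded near $x=0$ since both truncation functions behave linearly there, and bounded for large $|x|$ since both are bounded), so its integral against $\rho(s,dx)c(ds)$ converges and moves into the drift. This yields exactly the stated $\Gamma_{\widehat{u}}$ after adding $\int_{\cS}\gamma(s)\pi_{\widehat{u}}(f(\nullarg,s))\,c(ds)$, and identifies the L\'evy measure of $X_{\widehat{u}}$ as the pushforward $\eta\circ g_{\widehat{u}}^{-1}$ of $\eta(dxds)=\rho(s,dx)c(ds)$ under $(x,s)\mapsto\pi_{\widehat{u}}(xf(\nullarg,s))=\pi_{\widehat{u}}(g(x,s))$, restricted to $\bR^{\widehat{u}}\setminus0^{\widehat{u}}$.

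For the final identification $\nu_{\widehat{u}}=\nu\circ\pi_{\widehat{u}}^{-1}$ on $\cB(\bR^{\widehat{u}}\setminus0^{\widehat{u}})$ with $\nu=\eta\circ g^{-1}$ as in \eqref{eqn1.3.5}, I would use the functorial property of pushforwards: $g_{\widehat{u}}=\pi_{\widehat{u}}\circ g$ by definition of $g$ (since $g_{u}(x,s)=xf(u,s)$ and $\pi_{\widehat{u}}$ just selects the coordinates in $\widehat{u}$), hence $\eta\circ g_{\widehat{u}}^{-1}=\eta\circ g^{-1}\circ\pi_{\widehat{u}}^{-1}=\nu\circ\pi_{\widehat{u}}^{-1}$, and this identity is valid on Borel subsets of $\bR^{\widehat{u}}\setminus0^{\widehat{u}}$ where no integrability or atom-at-zero issue intervenes. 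One technical point to handle carefully is measurability: I should check that $g\colon\bR\times\cS\to\bR^{U}$ is measurable with respect to the cylindrical $\sigma$-algebra $\cB(\bR)^{U}$, which follows because each coordinate map $(x,s)\mapsto xf(u,s)$ is measurable (as $f(u,\nullarg)$ is measurable), and cylindrical measurability is equivalent to measurability of all coordinates; this also ensures $\nu=\eta\circ g^{-1}$ is a well-defined measure on $\cB(\bR)^{U}$.

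The main obstacle is the truncation-function bookkeeping in the second paragraph: one must verify rigorously that the correction term $\tau_{\#\widehat{u}}(\pi_{\widehat{u}}(xf(\nullarg,s)))-\pi_{\widehat{u}}(f(\nullarg,s))\tau_{1}(x)$ is $\rho(s,dx)c(ds)$-integrable, which requires controlling its behaviour both as $|x|\to0$ (where one uses $f(u,\nullarg)\in\cL_{\Phi_{0}}$, so in particular $\int\int 1\wedge|xf(u,s)|^{2}\,\rho(s,dx)c(ds)<\infty$, together with the linear approximation $\tau_{\#\widehat{u}}(y)\approx y$ and $\tau_{1}(x)\approx x$ near the origin, so the difference is $O(|x|^{2})$ type and controlled by the integrability of $f(u,\nullarg)$) and as $|x|\to\infty$ (where both truncation functions are uniformly bounded, so the difference is bounded by a constant times $\|\pi_{\widehat{u}}(f(\nullarg,s))\|\wedge1$-type quantity, again integrable). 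Everything else is routine pushforward manipulation; note that we never need $\nu$ itself to be $\sigma$-finite here — that subtlety (whether $\nu$ does not charge zero, hence is the \emph{master} rather than merely a pseudo master L\'evy measure) is precisely what the companion Proposition~\ref{propmasterlevmeas2} addresses under the continuity hypothesis on $f(\nullarg,s)$, and is deliberately not part of this statement.
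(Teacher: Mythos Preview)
Your proposal is correct and follows essentially the same route as the paper's proof: both pair $X_{\widehat{u}}$ with $\theta\in\bR^{\widehat{u}}$, apply the cumulant formula \eqref{eqn0.1} to obtain $\int_{\cS}\psi(\langle\pi_{\widehat{u}}[f(\nullarg,s)],\theta\rangle,s)\,c(ds)$, and then add and subtract the truncation correction $\tau_{\#\widehat{u}}(\pi_{\widehat{u}}(xf(\nullarg,s)))-\pi_{\widehat{u}}(f(\nullarg,s))\tau_{1}(x)$ to rewrite the cumulant in canonical L\'evy--Khintchine form on $\bR^{\widehat{u}}$, reading off the triplet by uniqueness. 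Your version is more explicit about the pushforward identity $\eta\circ g_{\widehat{u}}^{-1}=(\eta\circ g^{-1})\circ\pi_{\widehat{u}}^{-1}$ and about the integrability of the correction term, but these are elaborations of the paper's argument rather than a different approach.
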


\begin{proof}
  Let us start by noting that since the mapping
  $(x,s)\mapsto g_{u}(x,s)$ is
  $\cB(\bR)\otimes \cB_{\cS}/\cB(\bR)$-measurable for all $u\in U$ we
  have that $\nu$ is well defined. Now, let
  $\widehat{u}\in \widehat{U}$ and observe that for all
  $\theta \in \bR^{\widehat{u}}$
  \begin{equation*}
    \langle X_{\widehat{u}},\theta \rangle
    = \int_{\cS}\langle \pi_{\widehat{u}}[ f(\nullarg ,s)] ,\theta \rangle L(ds).
  \end{equation*}
  Thus, from \eqref{eqn0.1}, the cumulant function of
  $X_{\widehat{u}}$ satisfies
  \begin{equation*}
    \rC\{\theta \ddagger X_{\widehat{u}}\}
    =\int_{\cS}\psi \left(
      \langle \pi_{\widehat{u}}[ f(\nullarg ,s)] ,\theta \rangle,s
    \right) c(ds),
  \end{equation*}
  with $\psi$ as in \eqref{eqn1.1}. But for $c$-almost all $s\in \cS$
  \begin{align*}
    \MoveEqLeft[3]\psi(\langle \pi_{\widehat{u}}[ f(\nullarg ,s)]
    ,\theta \rangle,s)
    \\
    ={} & i\langle \gamma(s)\pi_{\widehat{u}}[f(\nullarg ,s)],\theta
          \rangle +\int_{\bR} \left[
          e^{ix\langle\pi_{\widehat{u}}[f(\nullarg ,s)] ,\theta \rangle }
          -1-i\langle \pi_{\widehat{u}}[ f(\nullarg ,s)],\theta \rangle
          \tau_{1}(x)\right] \rho(s,dx)
    \\
    ={} & i\Bigl\langle \gamma(s)\pi_{\widehat{u}}[ f(\nullarg ,s)]
          +\int_{\bR}\{
          \tau_{\#\widehat{u}}[\pi_{\widehat{u}}(xf(\nullarg,s))]
          -\pi_{\widehat{u}}(f(\nullarg ,s))\tau_{1}(x)\} \rho
          (s,dx),\theta\Bigr\rangle
    \\
        & +\int_{\bR}\left\{ e^{i\langle \pi_{\widehat{u}}[
          g_{\nullarg}(x,s)] ,\theta \rangle} -1-i\langle
          \tau_{\#\widehat{u}}[\pi_{\widehat{u}}(g_{\nullarg
          }(x,s))],\theta \rangle \right\} \rho(s,dx).
\end{align*}
Integrating the previous equation with respect to $c$ and invoking the
uniqueness of the triplet, the result follows.
\end{proof}

\begin{proof}[Proof of Proposition~\ref{propmasterlevmeas2}]
  From Theorem~\ref{thmmasterlevymeasure} and the previous proposition
  we only need to check that $\nu$ does not charge zero. Let $U_{0}$
  be a dense set in $U$, then from the definition of $\nu$
  \begin{align*}
    \nu(\pi_{U_{0}}^{-1}(0^{U_{0}}))
    ={} & \eta(\{(s,x) :xf(u,s) =0\ \forall \ u\in U_{0}\})                                                 \\
    ={} & \eta(\{(s,x) :x=0\text{ or }f(u_{0},s) =0\ \forall \ u\in U_{0}\})                                \\
    ={} & \lim_{n\rightarrow \infty }\eta(\{(s,x) :f(u_{0},s) =0\ \forall \ u\in U_{0},\abs{x} >1/n\}) \\
    ={} & \lim_{n\rightarrow \infty }\int_{\{ s\,:\,f(u,s) =0\: \forall \:
          u\in U_{0}\} }\rho(s,\{ \abs{x} >1/n\}) c(ds) =0,
  \end{align*}
  because if $f(u,s) =0$ $\forall$ $u\in U_{0},$ by the
  lower$\setminus$upper continuity we have that $f(u,s) =0$ for all
  $u\in U$, which is contradictory.
\end{proof}

\begin{remark}
  \label{remarknonuniquemasterlevymeasure}
  Observe that equation \eqref{eqn1.3.5.1} holds for any \ID Volterra
  field. This means that the measure $\nu$ defined by \eqref{eqn1.3.5}
  is always a pseudo master L\'{e}vy measure of an \ID Volterra
  field. However, it is not clear that in general such a measure does
  not charge zero for general index set $U$.
\end{remark}

\begin{remark}
  Note that $\nu$ can be viewed as a general
  $\Upsilon^{0}$-transformation (see for
  instance~\cite{BNPAT13}). Indeed, let $(E,\cB_{E})$ be a measurable
  space. For any fixed measurable function
  $g\colon\cS\times \bR^{d}\rightarrow E$ and $c$ a $\sigma$-finite
  measure on $\cS$, define the functional
  \begin{equation*}
    \Upsilon_{g,c}^{0}(\rho)(A)
    \coloneqq \int_{\cS}\int_{\bR^{d}}\indf_{g^{-1}(A)}(x,s)\rho(s,dx)c(ds),
    \qquad A\in \cB_{E},
  \end{equation*}
  where $(\rho(s,dx))_{s\in \cS}$ is a measurable collection of
  measures. Notice that $\Upsilon_{g,c}^{0}(\rho)=\eta \circ g^{-1}$,
  with $\eta(dxds)=\rho(s,dx)c(ds)$. In particular, if $\cS=\bR^{+}$,
  $g(x,s)=xs$ and $\rho$ does not depend on $s,$ $\Upsilon_{g,c}^{0}$
  coincides with the usual $\Upsilon^{0}$-transformation of $\rho$ via
  $c$.  In this case, it is well known that such transformation is
  generally not one-to-one. More generally, if $g(x,s)=T(s)x$, where
  $T$ is a measurable collection of linear mappings on $\bR^{d}$, we
  have that $\Upsilon_{g,c}^{0}(\rho)$ is the L\'{e}vy measure of
  $\Upsilon_{T}(\mu)$, the probability measure with cumulant
  \begin{equation*}
    \rC\{\theta \ddagger \Upsilon_{T}(\mu)\}
    = \int_{\cS}\rC\{T(s)\theta \ddagger L^{\prime }\}c(ds),
    \qquad L^{\prime }\sim \mu \text{ and }\mu \in \ID(\bR^{d}).
  \end{equation*}
  with $L^{\prime }$ the L\'{e}vy seed of a factorizable L\'{e}vy
  bases.  Hence, $\Upsilon_{g,c}^{0}(\rho)$ can be viewed as the
  L\'{e}vy measure of the probability measure
  $\Upsilon_{f}(\mu)\in \ID(\bR^{U}),$ with $\mu$ being the
  distribution of $L$ in $\ID(\bR^{\cR})$, characterized by
  \begin{equation*}
    \rC\{\theta\ddagger\Upsilon_{f}(\mu)\circ\pi_{\widehat{u}}^{-1}\}
    = \int_{\cS}\rC\{\langle \pi_{\widehat{u}}\left[
      f(\nullarg ,s)\right] ,\theta \rangle \ddagger \mu(s)\}c(ds)
    \qquad\text{for any $\widehat{u}\in \widehat{U},
      \theta \in \bR^{\widehat{u}}$},
  \end{equation*}
  and with $\mu(s)$ the distribution of the L\'{e}vy seed
  $L^{\prime }(s)$ for all $s\in \cS$. See~\cite{BNRT08}
  and~\cite{BNPAT13} for an extensive discussion on
  $\Upsilon^{0}$-transformations and generalizations.
\end{remark}

\subsection{Inherited selfdecomposability from the L\'{e}vy basis}

Typically the selfdecomposability of stochastic integrals can be
obtained by assuming that the integrator is also selfdecomposable. In
the case of random fields such result is also true. In this part we
verify this property for $\ID$ Volterra fields by using the
characterization provided in Proposition~\ref{propdilatationsd}. The
converse of such property will be discussed in the next subsection.

\begin{proposition}
  \label{ifpartthmselfd}
  Let $X$ be as in \eqref{eqn1} with $L=\{ L(A) :A\in \cR\}$ a
  L\'{e}vy basis. Suppose that $\cL( L) \in \bL_{m}(\bR^{\cR }) ,$
  then the law of $X$ belongs to $\bL_{m}(\bR^{U})$.
\end{proposition}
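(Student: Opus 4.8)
The plan is to reduce everything to the dilation criterion of Proposition~\ref{propdilatationsd} and the description of the master L\'{e}vy measure for Volterra fields. First I would treat the base case $m=0$, i.e.\ show that $\cL(L)\in\SD(\bR^{\cR})$ implies $\cL(X)\in\SD(\bR^U)$. By Proposition~\ref{propmasterlevmeas} (or at least Remark~\ref{remarknonuniquemasterlevymeasure}), the measure $\nu=\eta\circ g^{-1}$ with $g_u(x,s)=xf(u,s)$ and $\eta(dx\,ds)=\rho(s,dx)c(ds)$ is a pseudo master L\'{e}vy measure of $X$. In view of Remark~\ref{sdnonsigfin} it suffices to verify $\nu(qA)\leq\nu(A)$ for all $q>1$ and $A\in\cB(\bR)^U$, even without checking that $\nu$ does not charge zero. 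The key observation is the scaling identity $g^{-1}(qA)=\{(x,s): (xf(u,s))_{u\in U}\in qA\}=\{(x,s): (q^{-1}xf(u,s))_{u\in U}\in A\}$, i.e.\ $g^{-1}(qA)$ is obtained from $g^{-1}(A)$ by the dilation $(x,s)\mapsto(q^{-1}x,s)$ in the $x$-variable. Hence $\nu(qA)=\eta(g^{-1}(qA))=\int_{\cS}\rho(s,q^{-1}B_s)\,c(ds)$ where $B_s=\{x: (xf(u,s))_{u}\in A\}$, while $\nu(A)=\int_{\cS}\rho(s,B_s)\,c(ds)$. Now I invoke that $\cL(L)\in\SD(\bR^{\cR})$; by Proposition~\ref{propdilatationsd} applied to the field $L$ this means its master L\'{e}vy measure, whose $s$-slice structure is governed by $\rho(s,\nullarg)$, satisfies the dilation inequality — more concretely, one uses that each L\'{e}vy seed $\rho(s,\nullarg)$ is itself a one-dimensional selfdecomposable L\'{e}vy measure, so $\rho(s,qB)\leq\rho(s,B)$ for every $q>1$ and Borel $B$, by the classical characterization. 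Integrating $\rho(s,q^{-1}B_s)\geq\rho(s,B_s)$... — wait, the inequality goes $\rho(s,q^{-1}B_s)\leq\rho(s,q^{-1}B_s)$; more carefully, with $q>1$ the scaled set $qB_s\subset$ has smaller mass, and $q^{-1}B_s$ is the relevant one, so one should write $\nu(qA)=\int\rho(s, \{x: qx\in B_s\})c(ds)=\int\rho(s,q^{-1}B_s)c(ds)\le\int\rho(s,B_s)c(ds)=\nu(A)$, using $\rho(s,q^{-1}B_s)\le\rho(s,B_s)$, which is exactly $\rho(s,qC)\le\rho(s,C)$ with $C=q^{-1}B_s$. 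This gives the base case.

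For the inductive step, suppose the claim holds for $m-1$ and assume $\cL(L)\in\bL_m(\bR^{\cR})$. By definition there is a decomposition $L\overset{d}{=}q^{-1}L'+V^{(q)}$ with $L'$ an independent copy of $L$ and $\cL(V^{(q)})\in\bL_{m-1}(\bR^{\cR})$. The natural idea is to push this decomposition through the linear map $f\mapsto\int_\cS f(u,s)L(ds)$: since the Volterra functional is linear in the integrator, $X=\int_\cS f(u,s)L(ds)\overset{d}{=}q^{-1}\int_\cS f(u,s)L'(ds)+\int_\cS f(u,s)V^{(q)}(ds)$, where the two summands are independent. The first is $q^{-1}X'$ with $X'$ an independent copy of $X$, and the second is a Volterra field $\widetilde V^{(q)}$ driven by $V^{(q)}$. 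By the inductive hypothesis applied to $V^{(q)}$ (whose law is in $\bL_{m-1}(\bR^{\cR})$), $\cL(\widetilde V^{(q)})\in\bL_{m-1}(\bR^{U})$. Since $q>1$ was arbitrary, this is exactly the statement $\cL(X)\in\bL_m(\bR^U)$.

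The main obstacle, and the step needing the most care, is the decomposition of the integrator at the level of L\'{e}vy bases: one must make precise what $\cL(L)\in\bL_m(\bR^{\cR})$ means (a field indexed by $U=\cR$) and that the associated $V^{(q)}$ is again an independently scattered random measure / L\'{e}vy basis, so that $\int_\cS f(u,s)V^{(q)}(ds)$ is well defined and is genuinely a Volterra field in the sense of~\eqref{eqn1}. One subtlety is integrability: one needs $f(u,\nullarg)\in\cL_{\Phi_0}$ with respect to the control measure of $V^{(q)}$; since the characteristic quadruplet of $V^{(q)}$ is dominated in the appropriate sense by that of $L$ (the L\'{e}vy measure $\rho^{V^{(q)}}(s,\nullarg)$ is absolutely continuous with respect to $\rho(s,\nullarg)$ with density bounded by $1$, by the dilation inequality applied slicewise), $L$-integrability of $f(u,\nullarg)$ transfers to $V^{(q)}$-integrability via~\eqref{eqn1.3.2}. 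Alternatively, and perhaps more cleanly, one can avoid the stochastic-integral bookkeeping entirely and argue purely with master L\'{e}vy measures: the master L\'{e}vy measure of $V^{(q)}$-driven Volterra field is again of the form $\eta^{V^{(q)}}\circ g^{-1}$, and the relation $\nu(qA)=\nu_{V^{(q)}}(\cdot)+\cdots$ descends from the corresponding relation for $L$; I would use whichever of the two routes the authors have set up machinery for, but the linearity-of-the-integral route is the shortest.
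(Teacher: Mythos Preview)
Your overall strategy matches the paper's: for $m=0$ you invoke the pseudo master L\'{e}vy measure $\nu=\eta\circ g^{-1}$ and the dilation criterion (Proposition~\ref{propdilatationsd} together with Remark~\ref{sdnonsigfin}), and you leave the higher $m$ to induction. That is exactly what the paper does.

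There is, however, one genuine gap in your base case. You assert that ``each L\'{e}vy seed $\rho(s,\nullarg)$ is itself a one-dimensional selfdecomposable L\'{e}vy measure'' and then apply the dilation inequality slicewise. But $\cL(L)\in\SD(\bR^{\cR})$ only gives you directly that each $L(A)$ is \SD, i.e.\ that $\nu_A(qB)=\int_A\rho(s,qB)\,c(ds)\le\int_A\rho(s,B)\,c(ds)$ for every $A\in\cR$ and $B\in\cB(\bR)$. Passing from this to $\rho(s,qB)\le\rho(s,B)$ for $c$-a.e.\ $s$ \emph{simultaneously for all} Borel $B$ requires an additional argument (a countable generating class plus monotonicity/continuity of tails), which you have not supplied. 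The paper sidesteps this entirely: instead of disintegrating, it proves the integrated inequality
\[
\int_{\cS}\int_{\bR}h(q^{-1}x,s)\,\rho(s,dx)\,c(ds)\ \le\ \int_{\cS}\int_{\bR}h(x,s)\,\rho(s,dx)\,c(ds)
\]
for all non-negative measurable $h$, by checking it on indicators $\indf_{A\times B}$ (where it is exactly $\nu_A(qB)\le\nu_A(B)$) and then invoking the Functional Monotone Class Theorem. Applying this with $h(x,s)=\indf_A[g(x,s)]$ yields $\nu(qA)\le\nu(A)$ in one stroke, with no need to know anything about individual $\rho(s,\nullarg)$.

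A smaller point: your scaling computation is inverted. One has $g^{-1}(qA)_s=\{x:q^{-1}x\in B_s\}=qB_s$, not $q^{-1}B_s$; with the correct slice the needed inequality is simply $\rho(s,qB_s)\le\rho(s,B_s)$, i.e.\ the dilation criterion itself, so your subsequent substitution $C=q^{-1}B_s$ (which actually gives the reverse inequality) is unnecessary. The paper's integrated formulation avoids this bookkeeping altogether.

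Your inductive step is more detailed than the paper's (which just says ``the general case follows by induction''); the linearity-of-the-integral route you sketch is indeed the natural one, and your remark that the quadruplet of $V^{(q)}$ is dominated by that of $L$ (so $f(u,\nullarg)$ remains integrable) is the right ingredient.
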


\begin{proof}
  We will only check the case $m=0$, the general case follows by
  induction.  Fix $q>1$ and suppose that
  $\cL(L) \in \bL_{0}(\bR^{\cR}) .$ Firstly, let us observe that in
  this case for any non-negative measurable function
  $h\colon\cS\times \bR\rightarrow \bR$, we have
  \begin{equation}
    \int_{\cS}\int_{\bR}h(q^{-1}x,s) \rho(s, dx) c(ds) \leq
    \int_{\cS}\int_{ \bR}h(x,s) \rho(s, dx) c(ds) .
    \label{eqn1.3.10}
  \end{equation}
  Indeed, since $\cL(L) \in \bL_{0}( \bR^{\cR})$, it follows that
  $\cL(L(A)) \in \bL_{0}(\bR)$ for all $A\in \cR$. Therefore,
  $\nu_{A}(qB) \leq \nu_{A}(B)$ for any $A\in \cR$ and
  $B\in \cB(\bR)$, where $\nu_{A}(\nullarg)$ is the L\'{e}vy measure
  of $L(A) .$ But in view of
  $\nu_{A}(B) =\int_{\cS}\int_{ \bR^{d}}\indf_{A\times B}(x,s) \rho(s, dx) c(ds) $,
  \eqref{eqn1.3.10} holds for every function of the form
  $\indf_{A\times B}$ with $A\in \cR$ and $B\in \cB(\bR)$. The general
  case follows by the Functional Monotone Class Theorem.

  Thanks to Proposition~\ref{propdilatationsd} and
  Remark~\ref{sdnonsigfin}, in order to show that
  $\cL(X) \in \bL_{m}(\bR^{U})$ it is enough to check that
  \eqref{eqn2} holds for some pseudo master L\'{e}vy measure of $X$
  (its existence is guaranteed by
  Theorem~\ref{thmmasterlevymeasure}). Let $\nu$ be as in
  Proposition~\ref{propmasterlevmeas2}, then $\nu$ is a pseudo master
  L\'{e}vy measure of $X$ and
  \begin{align*}
    \nu(qA)
    ={}    & \int_{\cS}\int_{\bR}\indf_{qA} [ g(x,s) ]\rho(s, dx) c(ds)
    \\
    ={}    & \int_{\cS}\int_{\bR}\indf_{A}[ q^{-1}xf(\nullarg ,s) ]\rho(s, dx) c(d s)
    \\
    ={}    & \int_{\cS}\int_{\bR}\indf_{A}[ g(q^{-1}x,s) ] \rho(s, dx) c(d s)
    \\
    \leq{} & \int_{\cS}\int_{\bR}\indf_{A}[ g(x,s) ] \rho(s, dx) c(d s)
    \\
    ={} & \nu(A) ,\qquad A\in \cB(\bR)^{U},
  \end{align*}
  where we used \eqref{eqn1.3.10}. Thus,
  $\cL(X) \in \bL_{0}(\bR^{U})$.
\end{proof}

\subsection{Identification problem for \ID Volterra fields and
  selfdecomposability}

As we showed in the previous subsection, in general, if the L\'{e}vy
basis is \SD, the associated Volterra process is also \SD. In this
part we give sufficient conditions for which the converse holds. Based
on~\cite{Sau14}, we show that such conditions can be checked easily
for the class of stationary Volterra \ID fields.

Let $X$ be as in \eqref{eqn1} with $L$ a Poissonian L\'{e}vy basis
with characteristic quadruplet $(\gamma(s) ,0,\rho(s, d x) ,c( ds)) .$
Recall that $p\geq 0$ is such that $\bE(\abs{L(A)}^{p}) <\infty$ for
all $A\in \cR$. Define
$S_{\Phi_{p}}( f) \coloneqq \olspan\{ f(u,\nullarg) \}_{u\in U}$ in
$\cL_{\Phi_{p}}$ and
$S_{_{p}}(X) \coloneqq \olspan\{ X_{u}\}_{u\in U}$ in
$\cL^{p}(\Omega)$. In order to present the main theorem of this
section we need to introduce the following condition:

\begin{condition}
  \label{conditionkernel}
  For any $A\in \cR$, we have $\indf_{A}\in S_{\Phi_{p}}(f)$ (or
  equivalently $S_{\Phi_{p}}(f)=\nobreak \cL_{\Phi_{p}}$).
\end{condition}

\begin{remark}
  \label{remarkl2total}
  Note that when $p=2$ (i.e.\ $L$ is square-integrable) and $L$ is
  centered and homogeneous, Condition~\ref{conditionkernel} is
  equivalent to $S_{\Phi_{2}}(f) =\cL^{2}(\bR,ds) .$
\end{remark}

\begin{theorem}
  \label{propsdifpart}
  Let $X$ be as in \eqref{eqn1} with $L=\{ L(A) :A\in \cR\}$ a
  L\'{e}vy basis with characteristic quadruplet
  $(\gamma(s) ,b(s) ,\rho(s, dx) ,c(ds))$. Suppose that
  $\cL(L) \in \bL_{m}(\bR^{\cR }) ,$ then the law of $X$ belongs to
  $\bL_{m}(\bR^{U}) $. Conversely, assume that
  Condition~\ref{conditionkernel} holds. Then
  $\cL(X) \in \bL_{m}(\bR^{U})$ implies that
  $\cL(L) \in \bL_{m}( \bR^{\cR})$.
\end{theorem}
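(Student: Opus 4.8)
The first assertion is exactly Proposition~\ref{ifpartthmselfd}, so the work lies entirely in the converse. The plan is as follows. Assume Condition~\ref{conditionkernel} and $\cL(X)\in\bL_{m}(\bR^{U})$; I will show $\cL(L)\in\bL_{m}(\bR^{\cR})$. By the finite-dimensional characterization of the Urbanik classes recalled before Proposition~\ref{propsystemchtrip}, it suffices to prove that for every finite family $A_{1},\dots,A_{k}\in\cR$ the vector $(L(A_{1}),\dots,L(A_{k}))$ lies in $\bL_{m}(\bR^{k})$. I will again treat only the base case $m=0$ explicitly, the general case following by the same induction as in Proposition~\ref{ifpartthmselfd} (the field $V^{(q)}$ appearing in the decomposition of $X$ is itself a Volterra field driven by the L\'evy basis $V^{(q)}_{L}$ that decomposes $L$, so the hypotheses propagate).

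The key mechanism is to transport selfdecomposability backwards through the linear map $f\mapsto\int_{\cS}f(s)L(ds)$. First I observe that, by Condition~\ref{conditionkernel}, each $\indf_{A}$ with $A\in\cR$ lies in the closed span $S_{\Phi_{p}}(f)=\olspan\{f(u,\nullarg)\}_{u\in U}$, hence can be approximated in $\cL_{\Phi_{p}}$ by finite linear combinations $\sum_{j}c_{j}f(u_{j},\nullarg)$. By the continuity property of the stochastic integral (property~1 after \eqref{eqn0.3}), $L(A)=\int_{\cS}\indf_{A}(s)L(ds)$ is then the $\cL^{p}$-limit (hence the limit in law) of $\sum_{j}c_{j}X_{u_{j}}$. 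More generally, for a finite family $A_{1},\dots,A_{k}$, the random vector $(L(A_{1}),\dots,L(A_{k}))$ is a limit in law of random vectors of the form $(\langle\pi_{\widehat{u}}f(\nullarg,\cdot),\theta^{(i)}\rangle\text{-integrals})$, i.e.\ of linear images $T_{n}X_{\widehat{u}_{n}}$ of finite-dimensional marginals of $X$. Since $\cL(X)\in\bL_{0}(\bR^{U})$ gives $\cL(X_{\widehat{u}})\in\bL_{0}(\bR^{\widehat{u}})$ for every $\widehat{u}$, and the class $\bL_{0}(\bR^{d})$ is closed under linear images (a linear pushforward of a selfdecomposable law is selfdecomposable — immediate from the characteristic-function definition) and under weak limits (the dilation inequality \eqref{eqn2} on L\'evy measures, together with convergence of Gaussian and shift parts, passes to the limit; equivalently $\bL_{0}$ is weakly closed, which is classical), we conclude $(L(A_{1}),\dots,L(A_{k}))\in\bL_{0}(\bR^{k})$. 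This holds for all finite families, so $\cL(L)\in\bL_{0}(\bR^{\cR})$.

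The main obstacle I anticipate is making the "limit in law of linear images of $X_{\widehat{u}}$" step fully rigorous: approximating $\indf_{A_{i}}$ simultaneously for $i=1,\dots,k$ requires choosing, for each $i$, an approximating combination $\sum_{j}c^{(i)}_{j}f(u^{(i)}_{j},\nullarg)$, and then assembling these into a single linear map applied to a single marginal $X_{\widehat{u}_{n}}$ where $\widehat{u}_{n}=\{u^{(i)}_{j}:i,j\}$ — one must check that the resulting $\bR^{k}$-valued vectors converge jointly in probability to $(L(A_{1}),\dots,L(A_{k}))$, which follows from property~1 applied coordinatewise but deserves a line. The other point needing care is the weak closedness of $\bL_{m}(\bR^{d})$ for general $m$ and the propagation of Condition~\ref{conditionkernel} and the integrability exponent $p$ through the inductive step; both are standard but should be stated. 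Once these are in place, the argument is a short chain: Condition~\ref{conditionkernel} $\Rightarrow$ $L(A)$ is an $\cL^{p}$-limit of linear combinations of $X_{u}$'s $\Rightarrow$ finite-dimensional laws of $L$ are weak limits of linear images of Urbanik-class laws $\Rightarrow$ $\cL(L)\in\bL_{m}(\bR^{\cR})$.
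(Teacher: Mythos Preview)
Your proposal is correct and follows essentially the same route as the paper: approximate each $\indf_{A_{i}}$ in $\cL_{\Phi_{p}}$ by finite linear combinations of the $f(u,\nullarg)$, pass through the continuous integral map to get $\cL^{p}$-approximation of $(L(A_{1}),\dots,L(A_{k}))$ by linear images of finite-dimensional marginals $X_{\widehat{u}_{n}}$, and conclude via stability of $\bL_{m}$ under linear maps and weak closure. The ``obstacle'' you flag---assembling the $k$ approximations over a common finite index set $\widehat{u}_{n}=\bigcup_{i}\{u^{(i)}_{j}\}$ into a single matrix acting on $X_{\widehat{u}_{n}}$---is exactly what the paper does, and your coordinatewise $\cL^{p}$-convergence argument for joint convergence is the right one.
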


For the proof of this theorem we need some auxiliary results:

\begin{lemma}
  \label{lemmalineartransf}
  Suppose that $X\sim \mu$ with $\mu \in \bL_{m}(\bR^{d})$. Then for
  any linear transformation $T\colon \bR^{d}\rightarrow \bR^{k}$, the
  law of $T(X)$ is in $\bL_{m}(\bR^{k})$.
\end{lemma}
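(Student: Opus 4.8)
The plan is to prove Lemma~\ref{lemmalineartransf} by reducing the statement about $T(X)$ to the corresponding statement about $X$ via the behaviour of characteristic functions under linear maps, and then proceed by induction on $m$.

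First I would recall that for a linear map $T\colon\bR^{d}\to\bR^{k}$, the law of $T(X)$ has characteristic function $\widehat{\mu}\circ T^{\ast}$, where $T^{\ast}\colon\bR^{k}\to\bR^{d}$ is the adjoint (transpose) of $T$. For the base case $m=0$, suppose $\mu\in\bL_{0}(\bR^{d})=\SD(\bR^{d})$. Then for every $q>1$ there is $\mu_{q}\in\ID(\bR^{d})$ with $\widehat{\mu}(\theta)=\widehat{\mu}(q^{-1}\theta)\widehat{\mu}_{q}(\theta)$ for all $\theta\in\bR^{d}$. Evaluating at $\theta=T^{\ast}\xi$ for $\xi\in\bR^{k}$ and using linearity of $T^{\ast}$ (so that $q^{-1}T^{\ast}\xi=T^{\ast}(q^{-1}\xi)$), we get
\begin{equation*}
  \widehat{\mu}(T^{\ast}\xi)
  = \widehat{\mu}(T^{\ast}(q^{-1}\xi))\,\widehat{\mu}_{q}(T^{\ast}\xi),
  \qquad \xi\in\bR^{k}.
\end{equation*}
Since $\mu_{q}\in\ID(\bR^{d})$, its image $\mu_{q}\circ(T^{\ast})^{-1}$ (equivalently, the law of $T$ applied to a variable with law $\mu_{q}$) is in $\ID(\bR^{k})$, because infinite divisibility is preserved under linear images — this follows directly from the definition of \ID in terms of i.i.d.\ summands, or from the fact that the $n$-th root of $\widehat{\mu}_{q}$ composed with $T^{\ast}$ is again a characteristic function. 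Hence the displayed identity exhibits the law of $T(X)$ as selfdecomposable, i.e.\ in $\bL_{0}(\bR^{k})$.

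For the inductive step, assume the lemma holds for $m-1$ and let $\mu\in\bL_{m}(\bR^{d})$. By definition, for each $q>1$ the factor $\mu_{q}$ (the law of $V^{(q)}$ in the notation of the excerpt) lies in $\bL_{m-1}(\bR^{d})$. From the computation above, the law of $T(X)$ is selfdecomposable with corresponding factor being the law of $T$ applied to a variable with law $\mu_{q}$, that is $\mu_{q}\circ(T^{\ast})^{-1}$. By the induction hypothesis applied to $\mu_{q}\in\bL_{m-1}(\bR^{d})$, this factor lies in $\bL_{m-1}(\bR^{k})$. Therefore $T(X)$ is \SD with all its $V^{(q)}$-factors in $\bL_{m-1}(\bR^{k})$, which is precisely the statement that $\cL(T(X))\in\bL_{m}(\bR^{k})$.

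The only mild subtlety — and the step I would be most careful about — is confirming that the factor associated with $T(X)$ in its selfdecomposition is genuinely the linear image $\mu_{q}\circ(T^{\ast})^{-1}$ of the factor for $X$, rather than merely some \ID law with the right characteristic function on the range of $T^{\ast}$; but since $\widehat{\mu_{q}\circ(T^{\ast})^{-1}}(\xi)=\widehat{\mu}_{q}(T^{\ast}\xi)$ by definition of the pushforward, the identity is an equality of characteristic functions on all of $\bR^{k}$, so uniqueness of the selfdecomposition factor (which is just division of characteristic functions, valid since $\widehat{\mu}$ never vanishes for \ID laws) pins it down exactly. No further obstacle is expected; the argument is essentially a change of variables in the defining functional equation combined with the elementary fact that \ID and the Urbanik hierarchy are stable under linear pushforwards.
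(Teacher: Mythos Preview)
Your argument is correct: the induction on $m$ together with the characteristic-function identity $\widehat{\cL(T(X))}(\xi)=\widehat{\mu}(T^{\ast}\xi)$ is exactly the natural route, and your handling of the uniqueness of the factor via non-vanishing of \ID characteristic functions is fine. The paper itself simply writes ``The proof is straightforward thus omitted,'' so there is nothing to compare against beyond the fact that your write-up supplies the details the authors skip.

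One small notational slip worth fixing: the pushforward of $\mu_{q}$ under $T$ is $\mu_{q}\circ T^{-1}$ (a measure on $\bR^{k}$), not $\mu_{q}\circ (T^{\ast})^{-1}$; the latter does not type-check, since $\mu_{q}$ lives on $\bR^{d}$ while $(T^{\ast})^{-1}$ sends Borel sets of $\bR^{d}$ to subsets of $\bR^{k}$. Your parenthetical ``the law of $T$ applied to a variable with law $\mu_{q}$'' is the right object, and its characteristic function is indeed $\xi\mapsto\widehat{\mu}_{q}(T^{\ast}\xi)$, so only the symbol needs correcting.
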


\begin{proof}
  The proof is straightforward thus omitted.
\end{proof}

\begin{proposition}
  \label{propspanfspanX}
  Assume that $\indf_{A}\in S_{\Phi_{p}}( f)$ for some
  $A\in \cR$. Then $L(A) \in S_{_{p}}(X) $. Conversely, if $L$ is
  symmetric (or centered) and $L(A) \in S_{_{p}}(X)$ for some
  $A\in \cR$, then $\indf_{A}\in S_{\Phi_{p}}(f)$.
\end{proposition}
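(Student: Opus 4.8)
The statement links membership of an indicator $\indf_A$ in the closed span $S_{\Phi_p}(f)$ inside the Musielak–Orlicz space to membership of $L(A)$ in the closed span $S_p(X)$ inside $\cL^p(\Omega)$. The natural tool is the integration map $\Lambda\colon f(u,\nullarg)\mapsto X_u = \int_\cS f(u,s)\,L(ds)$, which by the properties recalled in Section~\ref{stochintsec} is continuous from $\cL_{\Phi_p}$ to $\cL^p(\Omega)$, and in the symmetric or centered case is an isomorphism onto its image (indeed onto $\cL^p(\Omega)$). The plan is to push $\indf_A\in S_{\Phi_p}(f)$ forward through $\Lambda$ for the first implication, and pull $L(A)\in S_p(X)$ back through $\Lambda^{-1}$ for the converse.

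For the forward direction I would argue as follows. Suppose $\indf_A\in S_{\Phi_p}(f)$, so there is a sequence $h_n$ in the algebraic span of $\{f(u,\nullarg)\}_{u\in U}$ with $h_n\to \indf_A$ in $\cL_{\Phi_p}$; write $h_n = \sum_{i} a_i^{(n)} f(u_i^{(n)},\nullarg)$. By linearity of the stochastic integral, $\int_\cS h_n(s)\,L(ds) = \sum_i a_i^{(n)} X_{u_i^{(n)}}\in \operatorname{span}\{X_u\}_{u\in U}$. By continuity of $\Lambda$ (property~1 in Section~\ref{stochintsec}), $\int_\cS h_n(s)\,L(ds)\to \int_\cS \indf_A(s)\,L(ds) = L(A)$ in $\cL^p(\Omega)$. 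Hence $L(A)$ lies in the closure $S_p(X)$. Note this half needs no symmetry or centering hypothesis, matching the statement.

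For the converse, assume $L$ is symmetric or centered, so $\Lambda\colon \cL_{\Phi_p}\to \cL^p(\Omega)$ is an isomorphism of Banach spaces. Suppose $L(A)\in S_p(X)$: there are $Y_n\to L(A)$ in $\cL^p(\Omega)$ with $Y_n\in\operatorname{span}\{X_u\}_{u\in U}$, say $Y_n = \sum_i a_i^{(n)} X_{u_i^{(n)}} = \Lambda\big(\sum_i a_i^{(n)} f(u_i^{(n)},\nullarg)\big)$ with $\sum_i a_i^{(n)} f(u_i^{(n)},\nullarg)\in S_{\Phi_p}(f)$. Since $\Lambda^{-1}$ is bounded, $\Lambda^{-1}(Y_n)\to \Lambda^{-1}(L(A)) = \indf_A$ in $\cL_{\Phi_p}$, and $\Lambda^{-1}(Y_n)$ belongs to the span of the $f(u,\nullarg)$'s; therefore $\indf_A\in S_{\Phi_p}(f)$. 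The one point deserving care is that $\Lambda^{-1}(L(A))=\indf_A$: this is exactly the injectivity statement in property~2 of Section~\ref{stochintsec} applied to the identity $\int_\cS \indf_A(s)\,L(ds)=L(A)$. The main obstacle, such as it is, is bookkeeping rather than conceptual: one must be slightly careful that the closed linear spans are taken in the correct topologies ($\Phi_p$-Luxemburg norm on one side, $\cL^p$-norm on the other) so that continuity and openness of $\Lambda$ transfer closures in both directions, and that in the non-symmetric/non-centered case only the forward implication is claimed, since there $\Lambda$ need not be injective.
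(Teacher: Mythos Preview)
Your proposal is correct and follows exactly the approach of the paper, which simply invokes the continuity of the integration map $g\mapsto\int_\cS g(s)\,L(ds)$ from $\cL_{\Phi_p}$ to $\cL^p(\Omega)$ for the forward implication and its isomorphism property in the symmetric/centered case for the converse. You have merely made explicit the approximation-by-linear-combinations argument that the paper leaves implicit.
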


\begin{proof}
  The result follows from the continuity of the mapping
  \begin{align*}
    (f\in \cL_{\Phi_{p}}) \longmapsto \Bigl(\int_{\cS}f(
    s) L(ds) \in \cL^{p}(\Omega ,\cF,\bP) \Bigr)
  \end{align*}
  and the fact that when $L$ is symmetric or centered such mapping is
  in fact an isomorphism.
\end{proof}

\begin{proof}[Proof Theorem~\ref{propsdifpart}]
  We will only check the case $m=0$, the general case follows by
  induction.  The first part was already proved in
  Proposition~\ref{ifpartthmselfd}.  Suppose that
  $\cL(X)\in \bL_{0}(\bR^{U})$ and Condition~\ref{conditionkernel}
  holds. Let $A_{1},\ldots ,A_{k}\in \cR$. From
  Condition~\ref{conditionkernel} and
  Proposition~\ref{propspanfspanX}, for any $j=1,\ldots k$ there are,
  $\mathbf{\theta}_{j}^{n}\coloneqq(\theta_{i}^{j})_{i=1}^{n}\in \bR^{n}$
  and
  $\widehat{u}_{j}^{n}\coloneqq (u_{i}^{j})_{i=1}^{n}\subset U^{n}$
  with $n\in \bN$, such that
  $\langle \smash{\mathbf{\theta}_{j}^{n},X_{\widehat{u}_{j}^{n}}}\rangle \xrightarrow{\smash{\scriptstyle \cL^{p}(\Omega)}} L(A_{j})$
  for any $j=1,\ldots k$. Putting
  $u^{n}\coloneqq \bigcup_{j=1}^{k}\widehat{u}_{j}^{n}$ we have that
  there exists $M(\theta),$ a $k\times \#u^{n}$ matrix only depending
  on $\mathbf{\theta }_{j}^{n}$ for $j=1,\ldots k,$ such that
  \begin{equation}
    M(\theta)X_{u^{n}}
    =(\langle \mathbf{\theta}_{j}^{n},X_{\widehat{u}_{j}^{n}}\rangle)_{j=1}^{k}
    \xrightarrow{\cL^{p}(\Omega)} (L(A_{j}))_{j=1}^{k}
    \qquad\text{as }n\rightarrow \infty .
    \label{eqn1.3.11}
  \end{equation}
  From Lemma \eqref{lemmalineartransf} and the fact that
  $\cL(X)\in \bL_{0}(\bR^{U})$, we have
  $\cL[M(\theta)X_{u^{n}}]\in \bL_{0}(\bR^{k})$ for any $n\in \bN$.
  The closedness of $\bL_{0}(\bR^{k})$ under weak limits guarantees
  that the weak limit of $\cL[M(\theta)X_{u^{n}}]$ belongs to
  $\bL_{0}(\bR^{k})$, or in other words
  $\cL((L(A_{j}))_{j=1}^{k})\in \bL_{0}(\bR^{k}),$ the
  selfdecomposability of $L$.
\end{proof}

\begin{remark}
  In view of Proposition~\ref{propdilatationsd} and equation
  \eqref{eqn1.3.11}, Condition~\ref{conditionkernel} allows us to
  determine the L\'{e}vy basis through the process $X$ by linear
  approximations, i.e.\ under this assumption for some $p\geq 0$
  \begin{equation}
    \olspan\{ X_{u}\}_{u\in U}=\olspan\{ L(A) \}_{A\in\cR}
    \qquad\text{in }\cL^{p}(\Omega) .
    \label{eqn1.3.14}
  \end{equation}
  Thus, this can be considered as an identification condition also
  discussed in more depth in~\cite{Sau14}.
\end{remark}

Due to~\cite{Sau14}, in the stationary case,
Condition~\ref{conditionkernel} can be easily checked as the following
theorem shows:

\begin{theorem}
  \label{sdstationary}
  Let $L$ be an homogeneous L\'{e}vy basis on $\cB_{b}(\bR^{d})$ and
  $g\in \cL^{1}( \bR^{d},ds) \cap \cL_{\Phi_{0}}$ having non-vanishing
  Fourier transform. Then the law of the \ID Volterra field
  \begin{equation*}
    X_{u}\coloneqq \int_{\bR^{d}}g(u-s) L(ds) ,
    \qquad u\in \bR^{d},
  \end{equation*}
  belongs to $\bL_{m}(\bR^{\bR^{d}})$ if and only if
  $\cL(L) \in \bL_{m}(\bR^{ \cB_{b}(\bR^{d}) })$.
\end{theorem}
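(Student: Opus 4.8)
The plan is to derive Theorem~\ref{sdstationary} as a corollary of Theorem~\ref{propsdifpart} by verifying that the hypotheses there---specifically Condition~\ref{conditionkernel}---hold in the stationary, homogeneous setting under the non-vanishing Fourier transform assumption. The forward implication ($\cL(L)\in\bL_m$ implies $\cL(X)\in\bL_m$) is already contained in the first half of Theorem~\ref{propsdifpart} (equivalently Proposition~\ref{ifpartthmselfd}) and requires no extra work, since a homogeneous L\'evy basis is in particular a L\'evy basis and $X_u=\int_{\bR^d}g(u-s)L(ds)$ is of the form \eqref{eqn1} with $f(u,s)=g(u-s)$, which lies in $\cL_{\Phi_0}$ by assumption. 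So the entire content is the converse: assuming $\cL(X)\in\bL_m(\bR^{\bR^d})$, I must produce $\cL(L)\in\bL_m(\bR^{\cB_b(\bR^d)})$, and by Theorem~\ref{propsdifpart} it suffices to check Condition~\ref{conditionkernel}, i.e.\ that $\indf_A\in S_{\Phi_0}(f)=\olspan\{g(u-\nullarg)\}_{u\in\bR^d}$ in $\cL_{\Phi_0}$ for every $A\in\cB_b(\bR^d)$.

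First I would reduce the problem to an $\cL^p$-density statement. Since $g\in\cL^1(\bR^d,ds)$, one cannot directly argue in $\cL_{\Phi_0}$; instead I would note that $g\in\cL^1\cap\cL_{\Phi_0}$ and that translates of $g$ span a translation-invariant closed subspace of $\cL^1(\bR^d,ds)$. By Wiener's Tauberian theorem, the closed linear span of $\{g(\cdot-u):u\in\bR^d\}$ in $\cL^1(\bR^d)$ is all of $\cL^1(\bR^d)$ precisely because the Fourier transform $\widehat g$ has no zeros. Hence simple functions $\indf_A$ with $A\in\cB_b(\bR^d)$---which lie in $\cL^1$---can be approximated in $\cL^1$-norm by finite linear combinations $\sum_j c_j g(\cdot-u_j)$. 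The remaining task is to upgrade $\cL^1$-approximation to $\cL_{\Phi_0}$-approximation (equivalently, to convergence of the corresponding stochastic integrals in probability), which is where Remark~\ref{remarkl2total} and the homogeneity of $L$ enter: one can either invoke the square-integrable reduction or, more robustly, use that $\cL^1$-convergence of kernels combined with the control-measure structure $\eta(dxds)=\rho(dx)\,ds$ (with $\rho$ not depending on $s$) forces $\int_{\bR^d}\Phi_0(|g_n(s)-\indf_A(s)|,s)\,ds\to0$ along a subsequence, hence $\norm{g_n-\indf_A}_{\Phi_0}\to0$.

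With Condition~\ref{conditionkernel} established, Theorem~\ref{propsdifpart} immediately yields $\cL(L)\in\bL_m(\bR^{\cB_b(\bR^d)})$, completing the converse and hence the equivalence. I would also remark that the argument is genuinely an application of~\cite{Sau14}, where the link between non-vanishing Fourier transforms and the spanning Condition~\ref{conditionkernel} is developed; in the write-up it is cleanest to cite that reference for the Wiener-Tauberian step rather than reprove it.

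The main obstacle I anticipate is the passage from $\cL^1(\bR^d,ds)$-approximation to $\cL_{\Phi_0}$-approximation. The Musielak--Orlicz norm $\norm{\cdot}_{\Phi_0}$ is not dominated by the $\cL^1$-norm in general---$\Phi_0$ mixes an $|xr|^2$ term near zero with an $|xr|^0=1$ indicator term for large jumps, plus the drift term $H$---so one cannot blindly transfer Wiener-Tauberian density. The resolution hinges on two facts special to the homogeneous case: the seed is $s$-independent, so $\Phi_0(r,s)=\Phi_0(r)$ is a fixed Young function, and any sequence converging in $\cL^1(\bR^d,ds)$ has a subsequence converging Lebesgue-a.e.\ and dominated, so dominated convergence applies to $\int\Phi_0(|g_n-\indf_A|)\,ds$ once one checks the dominating function is integrable---this uses $g\in\cL^1$ to control the $\Phi_0$-tails. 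Handling the case where $L$ has a Gaussian part ($b>0$) needs the extra $b^2r^2$ term in $\Phi_0$, but since $b$ is constant this contributes an $\cL^2$-type term that is again controlled along the a.e.-convergent dominated subsequence. Making this rigorous, or alternatively packaging it through the $p=2$ centered reduction of Remark~\ref{remarkl2total} (where $\cL_{\Phi_2}\cong\cL^2(\bR^d,ds)$ and Wiener--Tauberian in $\cL^2$ applies directly to the Fourier multiplier $\widehat g$), is the only non-routine step.
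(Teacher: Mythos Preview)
Your overall strategy matches the paper's exactly: the proof there consists of a single sentence invoking Theorem~13 of \cite{Sau14} to obtain $S_{\Phi_0}(g)=\cL_{\Phi_0}$ (i.e.\ Condition~\ref{conditionkernel}), and then appealing to Theorem~\ref{propsdifpart}. Your final recommendation---cite \cite{Sau14} rather than reprove the spanning statement---is precisely what the paper does.

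Where you diverge is in your attempt to sketch what is inside \cite{Sau14}. You propose Wiener's Tauberian theorem to get density of translates of $g$ in $\cL^1(\bR^d)$, followed by an upgrade from $\cL^1$-approximation to $\cL_{\Phi_0}$-approximation. That upgrade is, as you yourself flag, the fragile point: $\Phi_0$ is not dominated by $|\cdot|$, and the dominated-convergence heuristic you offer (a.e.\ convergence along a subsequence, with a dominating function whose $\Phi_0$-integral is finite) does not come for free from $\cL^1$-convergence alone---there is no reason the dominating envelope lies in $\cL_{\Phi_0}$. Your fallback to Remark~\ref{remarkl2total} also does not apply, since the theorem assumes neither square-integrability nor centering of $L$. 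The argument in \cite{Sau14} (visible in the paper's proof of Theorem~\ref{theoremintegratedprocsd}) runs differently: it is a Hahn--Banach/duality argument directly in the Musielak--Orlicz space, producing a nonzero $h$ in the dual with $\int g(u-s)h(s)\,ds=0$ for all $u$, and then using the non-vanishing of $\widehat g$ to force $h=0$. This bypasses the $\cL^1\to\cL_{\Phi_0}$ transfer entirely. So your high-level reduction is correct and identical to the paper's, but your proposed internal mechanism for the spanning step would not work as written; citing \cite{Sau14} is the right call.
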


\begin{proof}
  From Theorem 13 in~\cite{Sau14}, we have that in this case
  $S_{\Phi_{0}}(g)=\cL_{\Phi_{0}}$, which implies that
  Condition~\ref{conditionkernel} is fulfilled. The result follows
  from this and the previous theorem.
\end{proof}

\begin{example}[Ornstein-Uhlenbeck processes]
  \label{exampleOU}
  Suppose that $L$ is a L\'{e}vy process with characteristic triplet
  $(\gamma ,b,\rho)$. Let
  \begin{equation*}
    f(u,s)
    =\varphi_{0}(u-s) \coloneqq e^{-(u-s) } \indf_{\{ s\leq u\} },
    \qquad u,s\in \bR.
  \end{equation*}
  The resulting \ID Volterra field is the classic OU process driven by
  $L$.  It is well known that such processes are well defined if and
  only if $\int_{\abs{x} >1}\log(\abs{x}) \rho(dx) <\infty .$
  Moreover, in this case $\cL(X_{u}) \in \bL_{0}(\bR)$ for all
  $u\in \bR$ and it is uniquely determined by $L$ and vice versa.
  But, since $\widehat{\varphi_{0}}$, the Fourier transform of
  $\varphi_{0}$, never vanishes, we conclude that an OU process is \SD
  if and only if the background L\'{e}vy process is \SD as well, just
  as in~\cite[Theorem 3.4]{BNMS06a}.
\end{example}

\begin{example}[\lss process with a Gamma kernel]
  \label{examplegamma}
  Take $L$ to be a L\'{e}vy process with characteristic triplet
  $(\gamma ,b,\rho)$. Let $\alpha >-1$ and consider
  \begin{equation}
    f(u,s)
    =\varphi_{\alpha }(u-s)
    \coloneqq e^{-(u-s) }(u-s)^{\alpha }\indf_{\{ s\leq u\} } ,
    \qquad u,s\in \bR.
    \label{eqn1.3.15}
  \end{equation}
  It has been shown in~\cite{BOC14}, that
  $f(u,\nullarg)\in \cL_{\Phi_{0}}$ for every (equivalently for some)
  $u\in \bR$ if and only if the following two conditions are
  satisfied:

  \begin{enumerate}
  \item $\int_{\abs{x} >1}\log(\abs{x}) \rho(dx)
    <\infty$,
  \item One of the following conditions holds:
    \begin{enumerate}
    \item $\alpha >-1/2;$
    \item $\alpha =-1/2$, $b=0$ and
      $\int_{\abs{x} \leq 1}\abs{x}^{2}\abs{\log(\abs{x})} \rho(dx) <\infty ;$
    \item $\alpha \in(-1,-1/2)$, $b=0$ and
      $\int_{\abs{x} \leq 1}\abs{x}^{-1/\alpha }\rho(dx) <\infty .$
    \end{enumerate}
  \end{enumerate}

  Moreover, according to~\cite{PedSau14}, for any $-1<\alpha <0$,
  $\cL(X_{u}) \in \bL_{0}(\bR)$ for all $u\in \bR$.  However, since
  the Fourier transform of $\varphi_{\alpha }$ is given by
  \begin{equation*}
    \widehat{\varphi_{\alpha }}(\xi)
    =\frac{\Gamma(\alpha +1) }{\sqrt{2\pi }}(1-i\xi)^{-\alpha -1},
    \qquad \xi \in \bR,
  \end{equation*}
  the law of the L\'{e}vy semistationary process
  $X_{u}=\int_{-\infty }^{u}e^{-(u-s) }(u-s)^{\alpha }dL_{s}$ is in
  $\bL_{0}( \bR^{U})$ if and only if $L$ is selfdecomposable.
\end{example}

\begin{example} [Fractional L\'{e}vy motions]
  \label{examplefractional}
  Suppose that $L$ is a centered and square-integrable L\'{e}vy
  process with characteristic triplet $(\gamma ,b,\rho)$. For
  $\alpha \in(0,1/2)$ consider
  \begin{equation*}
    f(u,s) =(u-s)_{+}^{\alpha }-(-s)_{+}^{\alpha },\qquad u,s\in
    \bR,
  \end{equation*}
  where $(x)_{+}$ denotes the positive part of $x$. In~\cite{CM11}, it
  was shown that $\{ f(u,\nullarg) \}_{u\in U}$ is total in
  $\cL^{2}(ds)$, which according to Remark~\ref{remarkl2total},
  implies Condition~\ref{conditionkernel}.  Furthermore, the authors
  also noted that in general the marginal distribution of the Volterra
  process induced by this function is not selfdecomposable unless $L$
  is \SD, as Theorem~\ref{propsdifpart} shows.
\end{example}

\section{Integrated \ID Volterra fields}
\label{sec:integr-id-volt}

In this section we are interested in the random variable
\begin{equation}
  \mu(X;A) =\int_{A}X_{u}\mu(du) ,
  \qquad A\in \cB_{b}(\mu) ,
  \label{eqn5.1}
\end{equation}
where $X$ is an \ID Volterra field, $\mu$ a $\sigma$-finite measure
and $\cB_{b}(\mu) \coloneqq \{ A:\mu(A) <\infty \}$. We will consider
the following associated field
\begin{equation}
  X^{\mu }=(\mu(X;A))_{A\in \cB_{b}(\mu) },
  \label{eqn5.2}
\end{equation}
We start by giving sufficient conditions for $\mu(X;A)$ to exists.

\subsection{Existence of $\mu(X;A)$}

In this part we present sufficient conditions for which $\mu(X;A)$ as
in \eqref{eqn5.1} exists. To do this we use the Stochastic Fubini
Theorem presented in~\cite{BNBOC11}.

Let $(U,\cB(U),\mu)$ be a measurable space, where $U$ is a Polish
space, i.e.\ a complete and separable metric space, $\cB(U)$ its Borel
$\sigma$-algebra and $\mu$ a $\sigma $-finite measure. Note that
defining $\mu(X;A)$ involves two issues. Firstly, we must to verify
that the process $X$ has at least a measurable modification with
respect to $\cF\otimes\cB(U)$. The second consists in providing
sufficient conditions which guarantee that
$X\in \cL^{1}(U,\cB(U),\mu)$. In particular, for \ID Volterra fields,
it would be desirable to relate such conditions directly to the
kernel. Let $X$ be as in \eqref{eqn1}. In this case, it has been shown
in \cite{BNBOC11} that $X$ always admits a measurable
modification. Furthermore, the following Stochastic Fubini Theorem for
L\'{e}vy bases provides sufficient conditions for
$X\in \cL^{1}(U,\cB(U),\mu)$.

\begin{theorem}[{Stochastic Fubini
    Theorem \cite[Theorem~3.1]{BNBOC11}}]
  \label{fubini}
  Let $L$ be a centered L\'{e}vy basis with characteristic triplet
  $(\gamma(s),b(s),\rho(s,dx),c(ds)).$ Consider
  $f\colon U\times \cS\rightarrow\bR$ be a
  $\cB(U)/\cB_{\cS}$-measurable function such that
  $f(u,\nullarg)\in \cL_{\Phi_{0}}$ for all $u\in U$. Assume that for
  $A\in \cB(U)$
  \begin{equation}
    \int_{A}\norm{f(u,\nullarg)}_{\Phi_{1}}\mu(du)<\infty .
    \label{eqn5.4}
  \end{equation}
  where $\norm{\nullarg}_{\Phi_{1}}$ is as in \eqref{eqn0.3}. Then
  $f(\nullarg ,s)\in \cL^{1}(U,\cB(U),\mu)$ for $c$-almost every
  $s\in \cS$ and the mapping $s\longmapsto$ $\int_{A}f(u,s)\mu(du)$
  belongs to $\cL_{\Phi_{1}}$. In this case, all the integrals below
  exist and almost surely
  \begin{equation*}
    \int_{A}\left[ \int_{\cS}f(u,s)L(ds)\right] \mu(du)
    = \int_{\cS}\left[ \int_{A}f(u,s)\mu(du)\right] L(ds).
  \end{equation*}
  Moreover, if $\mu$ is finite, \eqref{eqn5.4} is equivalent to
  \begin{equation}
    \int_{A}\int_{\cS}\Bigl[ f^{2}(u,s)b^{2}(s)+\int_{\bR}|xf(u,s)|\wedge |xf(u,s)|^{2}\Bigr] c(ds)\mu(du)<\infty .  \label{eqn5.5}
  \end{equation}
\end{theorem}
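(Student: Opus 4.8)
The plan is to verify the three assertions of the theorem in turn: (i) that the one-dimensional sections $f(\nullarg,s)$ are $\mu$-integrable and that the ``integrated kernel'' $s\mapsto\int_A f(u,s)\mu(du)$ lies in $\cL_{\Phi_1}$, (ii) the Fubini identity itself, and (iii) the equivalence of \eqref{eqn5.4} and \eqref{eqn5.5} when $\mu$ is finite. For (i), I would first reduce to the case of nonnegative $f$ by splitting into positive and negative parts, so that all Tonelli-type interchanges of $\int_A\nobreak\cdot\,\mu(du)$ with $\int_\cS\nobreak\cdot\,c(ds)$ and with $\int_\bR\nobreak\cdot\,\rho(s,dx)$ are legitimate. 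The hypothesis \eqref{eqn5.4} together with the definition \eqref{eqn0.3} of the Luxemburg norm gives, for each $u$, control of $\int_\cS\Phi_1(\norm{f(u,\nullarg)}_{\Phi_1}^{-1}|f(u,s)|,s)c(ds)\le 1$; the key structural fact I would exploit is that $\Phi_1(r,s)$ from \eqref{eqn1.3.2} is, for fixed $s$, a convex function of $r$ that is comparable to a norm near the relevant scale, so that $\norm{\nullarg}_{\Phi_1}$ is genuinely subadditive. Then $\norm{\int_A f(u,\nullarg)\mu(du)}_{\Phi_1}\le\int_A\norm{f(u,\nullarg)}_{\Phi_1}\mu(du)<\infty$ by a Minkowski-type integral inequality for the Luxemburg norm, which simultaneously yields that the inner integral $\int_A f(u,s)\mu(du)$ is $c$-a.e.\ finite (hence $f(\nullarg,s)\in\cL^1(\mu)$ for $c$-a.e.\ $s$) and that it belongs to $\cL_{\Phi_1}$.

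For (ii), the Fubini identity, I would use the standard three-step approximation. First check it for simple integrands $f(u,s)=\sum_i a_i(u)\indf_{A_i}(s)$ with $A_i\in\cR$ and $a_i\in\cL^1(\mu)$, where it reduces to linearity of $m$ in \eqref{eqn1.3} and ordinary Fubini for the deterministic part. Second, for general $f$ satisfying \eqref{eqn5.4}, approximate $f(u,\nullarg)$ in $\cL_{\Phi_1}$ by simple functions $f_n(u,\nullarg)$ in a way that is uniform enough in $u$ that $\int_A\norm{f_n(u,\nullarg)-f(u,\nullarg)}_{\Phi_1}\mu(du)\to 0$; here I would invoke the separability of the Banach space $\cL_{\Phi_1}$ and a dominated-convergence argument using \eqref{eqn5.4} as the dominating hypothesis. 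Passing to the limit then uses the continuity property of the stochastic integral recalled as item~1 after \eqref{eqn0.3} (so $\int_\cS f_n(u,s)L(ds)\to\int_\cS f(u,s)L(ds)$ in $\cL^1$ uniformly enough to integrate over $A$) on one side, and on the other side the fact that $\int_A f_n(u,\nullarg)\mu(du)\to\int_A f(u,\nullarg)\mu(du)$ in $\cL_{\Phi_1}$, again via item~1, gives convergence of $\int_\cS[\int_A f_n(u,s)\mu(du)]L(ds)$. Matching the two limits gives the identity a.s.

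For (iii), when $\mu$ is finite, I would show \eqref{eqn5.4} $\Leftrightarrow$ \eqref{eqn5.5} by comparing $\norm{f(u,\nullarg)}_{\Phi_1}$ with the ``modular'' quantity $\int_\cS[\,f^2(u,s)b^2(s)+\int_\bR|xf(u,s)|\wedge|xf(u,s)|^2\rho(s,dx)\,]c(ds)$. Since $L$ is centered, the $H$-term \eqref{eqn1.3.3} in $\Phi_1$ is controlled by the jump part, so $\Phi_1(r,s)$ is comparable (up to universal constants, for $r$ in a bounded range) to $b^2(s)r^2+\int_\bR|xr|\wedge|xr|^2\rho(s,dx)$; the standard fact that for a convex modular $\Phi$ with $\Phi(0)=0$ one has $\norm{h}_{\Phi}<\infty\iff\int\Phi(\lambda|h|)\,dc<\infty$ for some/all small $\lambda$, combined with finiteness of $\mu$ (which lets one pass from an integrated-norm condition to an integrated-modular condition via Jensen and the finite total mass), converts \eqref{eqn5.4} into \eqref{eqn5.5}. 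I expect the main obstacle to be (ii): making the approximation $f_n\to f$ sufficiently uniform in $u$ that both sides of the identity converge simultaneously — in particular producing simple approximants whose $\cL_{\Phi_1}$-errors are dominated by a $\mu$-integrable function of $u$ — is the delicate point, since $\cL_{\Phi_1}$ is merely a Banach (not Hilbert) space and the approximation must respect the joint measurability in $(u,s)$. Everything else is bookkeeping with convex modulars and the already-cited continuity/isomorphism properties of the integral.
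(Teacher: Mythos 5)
You should first note that the paper does not prove this theorem at all: it is imported verbatim from \cite[Theorem~3.1]{BNBOC11}, so there is no internal proof to compare against. Your parts (i) and (ii) follow what is essentially the standard argument of that reference (treat $u\mapsto f(u,\nullarg)$ as a Bochner-integrable map into the Musielak--Orlicz space, use a Minkowski-type integral inequality for the Luxemburg norm, and pass the continuous linear map $g\mapsto\int_{\cS}g(s)L(ds)$ through an approximation by simple integrands). Two points there are imprecise but repairable: $\Phi_{1}(\nullarg,s)$ is \emph{not} convex in $r$ (the integrand $\abs{xr}\wedge\abs{xr}^{2}$ has a concave kink at $\abs{xr}=1$), so the integral Minkowski inequality must be run through an equivalent convex Musielak--Orlicz function, or accepted with a multiplicative constant; and the left-hand side of the Fubini identity only makes sense once one has a version of $u\mapsto\int_{\cS}f(u,s)L(ds)$ that is jointly measurable in $(u,\omega)$ --- your approximation scheme can produce one, but this has to be stated and proved, not assumed.

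The genuine gap is in part (iii). Jensen's inequality with the normalized measure $\mu(\nullarg)/\mu(A)$ bounds $\bigl(\int_{A}\norm{f(u,\nullarg)}_{\Phi_{1}}\mu(du)\bigr)^{2}$ \emph{by} a quantity of the type appearing in \eqref{eqn5.5}, i.e.\ it runs in the wrong direction for deducing \eqref{eqn5.5} from \eqref{eqn5.4}; since the modular can grow like the \emph{square} of the norm (already because of the Gaussian term $b^{2}(s)r^{2}$), integrability of the norm in $u$ does not control integrability of the modular. Concretely, let $L$ be a centered Gaussian basis on $\cS=[0,1]$ with $b\equiv1$, $\rho\equiv0$, $c=\leb$; then $H\equiv0$, $\Phi_{1}(r,s)=r^{2}$ and $\norm{h}_{\Phi_{1}}=\norm{h}_{\cL^{2}(ds)}$. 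With $U=A=(0,1)$, $\mu=\leb$ and $f(u,s)=u^{-3/4}$ one has $\int_{A}\norm{f(u,\nullarg)}_{\Phi_{1}}\mu(du)=\int_{0}^{1}u^{-3/4}du<\infty$ while the double integral in \eqref{eqn5.5} equals $\int_{0}^{1}u^{-3/2}du=\infty$. So your proposed route cannot establish the implication \eqref{eqn5.4}$\Rightarrow$\eqref{eqn5.5}, and indeed that implication fails verbatim when a Gaussian component is present; what does follow for finite $\mu$, from the elementary modular--norm comparison $\norm{h}_{\Phi_{1}}\leq 1+\int_{\cS}\Phi_{1}(\abs{h(s)},s)c(ds)$ (not from Jensen), is the converse implication \eqref{eqn5.5}$\Rightarrow$\eqref{eqn5.4}. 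For this last part you would need either the precise formulation of the cited theorem or an additional restriction (e.g.\ $b\equiv0$ together with a linear upper bound on the modular) before any proof can go through.
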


In spirit of the previous theorem, for the rest of this section $L$
will be assumed to be centered.

\begin{remark}
  \label{remarkstationaryfubini}
  Note that in the stationary case, i.e.\ when $L$ is homogeneous and
  $f(u,s) =g(u-s) ,$ with $g\in \cL_{\Phi_{0}}$, \eqref{eqn5.4} holds
  if and only if $\mu( A) <\infty$ and $g\in \cL_{\Phi_{1}}$. Indeed,
  this follows from the fact that in this case
  \begin{equation}
    \norm{f(u,\nullarg)}_{\Phi_{1}}=\norm{g}_{\Phi_{1}}
    \qquad\text{for all }u\in U.
    \label{eqn5.5.1}
  \end{equation}
\end{remark}

Using the previous theorem, it is easy to check the validity of the
next proposition:

\begin{proposition}
  \label{popintid}
  Assume that \eqref{eqn5.4} holds. Then the random variable
  $\mu(X;A)$ in \eqref{eqn5.1} is well defined and it is infinitely
  divisible.
\end{proposition}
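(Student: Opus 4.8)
The plan is to apply the Stochastic Fubini Theorem (Theorem~\ref{fubini}) directly, using it to rewrite $\mu(X;A)$ as a single stochastic integral with respect to the L\'{e}vy basis $L$, and then invoke the general theory of integrals against L\'{e}vy bases recalled in Section~\ref{stochintsec} to conclude infinite divisibility.

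First I would fix $A \in \cB_b(\mu)$ and recall that by hypothesis condition~\eqref{eqn5.4} holds, i.e.\ $\int_A \norm{f(u,\nullarg)}_{\Phi_1}\,\mu(du) < \infty$. Since $L$ is assumed centered from this point in the paper, Theorem~\ref{fubini} applies: it guarantees that $s \mapsto \int_A f(u,s)\,\mu(du)$ defines an element of $\cL_{\Phi_1} \subset \cL_{\Phi_0}$, that all the relevant integrals exist, and that almost surely
\begin{equation*}
  \mu(X;A) = \int_A X_u\,\mu(du) = \int_A\Bigl[\int_{\cS} f(u,s)\,L(ds)\Bigr]\mu(du) = \int_{\cS}\Bigl[\int_A f(u,s)\,\mu(du)\Bigr]L(ds).
\end{equation*}
Writing $h_A(s) \coloneqq \int_A f(u,s)\,\mu(du)$, we have $h_A \in \cL_{\Phi_1} \subset I_m$, so the stochastic integral $\int_{\cS} h_A(s)\,L(ds)$ is well defined in the sense of \eqref{eqn1.3.1}. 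This settles the ``well defined'' part of the claim.

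For infinite divisibility, I would simply recall the fact stated just below \eqref{eqn1.3.1}: whenever $f \in I_m = \cL_{\Phi_0}$, the random variable $\int_{\cS} f(s)\,L(ds)$ is infinitely divisible, with cumulant function given by \eqref{eqn0.1}, namely $\rC\{\theta \ddagger \int_{\cS} h_A(s)\,L(ds)\} = \int_{\cS}\psi(h_A(s)\theta,s)\,c(ds)$. Applying this with $f = h_A$ gives that $\mu(X;A)$ is \ID, completing the proof.

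The only point that requires any care is verifying that the hypotheses of Theorem~\ref{fubini} are met, and here the main (mild) obstacle is the centering assumption on $L$: Theorem~\ref{fubini} is stated for centered L\'{e}vy bases, but the paragraph immediately preceding the proposition explicitly stipulates that $L$ is centered for the remainder of the section, so this is already in force. The measurability of $f$ with respect to $\cB(U)/\cB_{\cS}$ and the integrability $f(u,\nullarg) \in \cL_{\Phi_0}$ for all $u$ are part of the standing definition \eqref{eqn1} of an \ID Volterra field (here with $U$ Polish, hence separable), so no extra work is needed. Everything else is a direct citation of results already established in the excerpt.
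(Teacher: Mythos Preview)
Your proposal is correct and follows exactly the route the paper intends: the paper does not give a written proof but simply remarks that, using the Stochastic Fubini Theorem, it is easy to check the proposition, and your argument spells out precisely that verification via Theorem~\ref{fubini} together with the infinite divisibility of $L$-integrals from \eqref{eqn0.1}.
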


\subsection{Selfdecomposability of $X^{\mu }$}

In this part we study the selfdecomposability of the fields $X^{\mu }$
defined in \eqref{eqn5.2}.

Let $(U,\cB(U) ,\mu)$ be a measurable space as in the previous
subsection. From Theorem~\ref{fubini}, if \eqref{eqn5.4} holds for all
$A\in \cB_{b}(\mu) ,$ the random field
$X^{\mu }=(\mu(X;A))_{A\in \cB_{b}(\mu) }$ is well defined and it
admits the following representation
\begin{equation*}
  \mu(X;A) =\int_{\cS}\mu_{f}(A,s) L(ds) ,
  \qquad A\in\cB_{b}(\mu) ,
\end{equation*}
where
\begin{equation*}
  \mu_{f}(A,s) \coloneqq \int_{A}f(u,s) \mu(d u) ,
  \qquad A\in \cB_{b}(\mu) ,s\in \cS.
\end{equation*}
In addition, $X^{\mu }$ is an \ID field with system of characteristic
triplets given as in Proposition \eqref{propmasterlevmeas}. However,
since the indexing set of the field is not separable in general, we
can only argue that the measure given in \eqref{eqn1.3.5} is a pseudo
master L\'{e}vy measure of $X^{\mu }$.

Due to Theorem~\ref{propsdifpart}, if
$\olspan(\{ \mu_{f}( A,\nullarg) \}_{A\in \cB_{b}(\mu) }) =\cL_{\Phi_{1}}$,
we have that the law of $Y_{\mu }$ is in
$\bL_{m}( \bR^{\cB_{b}(\mu) })$ if and only if
$\cL( L) \in \bL_{m}(\bR^{\cR})$. Here a natural question appears, is
the selfdecomposability of $X$ (or
$\olspan(\{ f(u,\nullarg) \}_{u\in U}) =\cL_{\Phi_{1}}$) necessary and
sufficient for the one on $X^{\mu }?$. In the stationary case the
answer is affirmative as the following theorem shows:

\begin{theorem}
  \label{theoremintegratedprocsd}
  Let $L$ be an homogeneous centered L\'{e}vy basis on
  $\cB_{b}(\bR^{d})$ and
  $g\in \cL^{1}(\bR^{d},ds) \cap \cL_{\Phi_{1}}$ having non-vanishing
  Fourier transform. Assume that $\mu$ is a finite measure such that
  $\mu \sim Leb^{d}$. Then, the law of the integrated process
  \begin{equation*}
    \mu(X;A) =\int_{A}X_{u}\mu(du) ,
    \qquad A\in \cB_{b}(U) ,
  \end{equation*}
  where
  \begin{equation*}
    X_{u}\coloneqq \int_{\bR^{d}}g(u-s) L(ds) ,
    \qquad u\in \bR^{d},
  \end{equation*}
  belongs to $\bL_{m}(\bR^{\cB_{b}(\mu) })$ if and only if
  $\cL(X) \in \bL_{m}( \bR^{\bR^{d}})$ or equivalently
  $\cL(L) \in \bL_{m}(\bR^{\cR})$.
\end{theorem}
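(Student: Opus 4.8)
The plan is to reduce the statement to results already in hand and to isolate one genuinely new density fact. First note that $g\in\cL^1(\bR^d,ds)\cap\cL_{\Phi_1}\subseteq\cL^1(\bR^d,ds)\cap\cL_{\Phi_0}$, since $\Phi_0\le\Phi_1$ pointwise in \eqref{eqn1.3.2}; hence Theorem~\ref{sdstationary} already yields $\cL(X)\in\bL_m(\bR^{\bR^d})\iff\cL(L)\in\bL_m(\bR^{\cR})$, and it only remains to prove $\cL(X^\mu)\in\bL_m(\bR^{\cB_b(\mu)})\iff\cL(L)\in\bL_m(\bR^{\cR})$. Since $L$ is centered we use $p=1$ throughout. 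As $\mu$ is finite and $g\in\cL_{\Phi_1}$, Remark~\ref{remarkstationaryfubini} gives that \eqref{eqn5.4} holds for every $A\in\cB_b(\mu)$, so by Theorem~\ref{fubini} the field $X^\mu=(\mu(X;A))_{A\in\cB_b(\mu)}$ is a well-defined \ID Volterra field driven by $L$ with kernel $\tilde f(A,s)\coloneqq\mu_f(A,s)=\int_Ag(u-s)\,\mu(du)$ satisfying $\tilde f(A,\nullarg)\in\cL_{\Phi_1}$ for every $A\in\cB_b(\mu)$.

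The forward implication $\cL(L)\in\bL_m(\bR^{\cR})\Rightarrow\cL(X^\mu)\in\bL_m(\bR^{\cB_b(\mu)})$ is the first part of Theorem~\ref{propsdifpart} (equivalently Proposition~\ref{ifpartthmselfd}) applied to the Volterra field $X^\mu$; its proof uses only a pseudo master L\'evy measure of $X^\mu$, which exists by Remark~\ref{remarknonuniquemasterlevymeasure}, together with Proposition~\ref{propdilatationsd} and Remark~\ref{sdnonsigfin}, so the non-separability of $\cB_b(\mu)$ is immaterial. For the converse, $\cL(X^\mu)\in\bL_m(\bR^{\cB_b(\mu)})\Rightarrow\cL(L)\in\bL_m(\bR^{\cR})$, we apply the converse part of Theorem~\ref{propsdifpart} to $X^\mu$ with $p=1$ (again separability plays no role, as that proof manipulates only finitely many indices $A_1,\dots,A_k\in\cR$ at a time). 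Thus it suffices to verify Condition~\ref{conditionkernel} for the kernel $\tilde f$, i.e.\ that
\begin{equation*}
  \olspan\{\mu_f(A,\nullarg)\}_{A\in\cB_b(\mu)}=\cL_{\Phi_1}.
\end{equation*}

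To prove this density, recall that $g$ has non-vanishing Fourier transform, so the argument behind Theorem~\ref{sdstationary} (via \cite[Theorem~13]{Sau14}, now in $\cL_{\Phi_1}$ rather than $\cL_{\Phi_0}$) gives $\olspan\{g(u-\nullarg)\}_{u\in\bR^d}=\cL_{\Phi_1}$; hence it is enough to show $g(u-\nullarg)\in\olspan\{\mu_f(A,\nullarg)\}_{A\in\cB_b(\mu)}$ for each $u\in\bR^d$. Write $\mu=w\,\leb^d$ with $0<w<\infty$ $\leb^d$-a.e.\ (possible since $\mu\sim\leb^d$); then $0<\mu(B_\varepsilon(u))<\infty$ for every $\varepsilon>0$, and using the integral form of the triangle inequality for $\norm{\nullarg}_{\Phi_1}$ and its translation invariance \eqref{eqn5.5.1} (valid as $L$ is homogeneous),
\begin{equation*}
  \Bigl\|\frac{\mu_f(B_\varepsilon(u),\nullarg)}{\mu(B_\varepsilon(u))}-g(u-\nullarg)\Bigr\|_{\Phi_1}
  \;\le\;\frac{1}{\mu(B_\varepsilon(u))}\int_{B_\varepsilon(u)}\norm{g(v-\nullarg)-g(u-\nullarg)}_{\Phi_1}\,\mu(dv)
  \;\le\;\sup_{|h|<\varepsilon}\norm{g(\nullarg+h)-g}_{\Phi_1}.
\end{equation*}
Letting $\varepsilon\downarrow0$ and invoking continuity of translations in $\cL_{\Phi_1}$ — which holds because for homogeneous $L$ the function $\Phi_1$ in \eqref{eqn1.3.2} is independent of $s$ and of polynomial growth, hence satisfies a $\Delta_2$-type condition, so that $C_c(\bR^d)\cap\cL_{\Phi_1}$ is dense in the translation-invariant Orlicz space $\cL_{\Phi_1}$ on which translations act isometrically — we obtain $g(u-\nullarg)=\lim_{\varepsilon\downarrow0}\mu(B_\varepsilon(u))^{-1}\mu_f(B_\varepsilon(u),\nullarg)$ in $\cL_{\Phi_1}$. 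This establishes Condition~\ref{conditionkernel} for $\tilde f$, hence $\cL(L)\in\bL_m(\bR^{\cR})$, and combining with Theorem~\ref{sdstationary} completes the proof.

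The step I expect to be the main obstacle is the continuity of translations in the Musielak--Orlicz space $\cL_{\Phi_1}$ (equivalently, density of $C_c(\bR^d)$ there): this forces one to record the precise growth behaviour of $\Phi_1$ and is the only point where concrete function-space analysis, rather than soft arguments, is required; closely tied to it is the need for the $\cL_{\Phi_1}$-version (rather than the $\cL_{\Phi_0}$-version) of the spanning result underlying Theorem~\ref{sdstationary}. A secondary, essentially bookkeeping, subtlety is that the index set $\cB_b(\mu)$ is not separable, so one must check — as indicated above — that the invoked parts of Theorems~\ref{propsdifpart} and~\ref{fubini} and of Propositions~\ref{ifpartthmselfd} and~\ref{propdilatationsd} remain valid in that setting, which they do because they rely only on pseudo master L\'evy measures and on finite sub-collections of indices.
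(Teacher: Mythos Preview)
Your reduction to the density statement $\olspan\{\mu_f(A,\nullarg)\}_{A\in\cB_b(\mu)}=\cL_{\Phi_1}$ is exactly what the paper does, and both arguments ultimately rest on the same Wiener--Tauberian input from \cite[Theorem~13]{Sau14}. The difference lies in how the density is obtained. The paper argues by Hahn--Banach: assuming a non-zero $h$ in the dual of $\cL_{\Phi_1}$ annihilates every $\mu_f(A,\nullarg)$, Fubini gives $\int_A\bigl(\int_{\bR^d} g(u-s)h(s)\,ds\bigr)\mu(du)=0$ for all $A$, hence $g\ast h(u)=0$ for $\mu$-a.e.\ (and by $\mu\sim\leb^d$, for Lebesgue-a.e.) $u$, and then the proof of \cite[Theorem~13]{Sau14} forces $h=0$. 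Your route is instead constructive: you first quote $\olspan\{g(u-\nullarg)\}_{u\in\bR^d}=\cL_{\Phi_1}$ and then recover each $g(u-\nullarg)$ as a limit of normalized averages $\mu_f(B_\varepsilon(u),\nullarg)/\mu(B_\varepsilon(u))$ via Minkowski's integral inequality and continuity of translations in $\cL_{\Phi_1}$.

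Both approaches are valid. The paper's is shorter and cleaner because it sidesteps the function-space analysis you flag as the main obstacle: no $\Delta_2$ condition, no density of $C_c$, and no continuity of translations are needed, only Fubini and Hahn--Banach. Your approach has the merit of being explicit about how $\{\mu_f(A,\nullarg)\}$ recovers the translates, but the price is precisely the extra Orlicz-space lemma you identify. Your remarks on non-separability of $\cB_b(\mu)$ and the use of pseudo master L\'evy measures are accurate and match how the paper implicitly handles those points.
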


\begin{proof}
  From the discussion above, we only need to check that
  \begin{align*}
    \olspan(\{\mu_{f}(A,\nullarg)\}_{A\in \cB_{b}(\mu})=\cL_{\Phi_{1}}.
  \end{align*}
  Suppose the opposite, this is (thanks to the Hahn-Banach Theorem),
  there exists $h$ a non-zero measurable function in the dual of
  $\cL_{\Phi_{1}}$ such that (see~\cite{RaoRen94} or Corollary~3
  in~\cite{Sau14})
  \begin{equation}
    \int_{\bR^{d}}\mu_{f}(A,s)h(s)ds=0,
    \qquad\text{for all }A\in \cB_{b}(\bR^{d}).
    \label{eqn5.6}
  \end{equation}
  Since $\mu$ is finite, $\cB_{b}(\mu )=\cB(\bR^{d})$.  Moreover, due
  to \eqref{eqn5.4} and $g\in \cL^{1}(\bR^{d},ds)\cap \cL_{\Phi_{1}},$
  we have that
  \begin{align*}
    0={} & \int_{\bR^{d}}\mu_{f}(A,s)h(s)\,ds
    \\
    ={}  & \int_{\bR^{d}}\int_{A}g(u-s)h(s)\,\mu(du)ds
    \\
    ={}  & \int_{A}\int_{\bR^{d}}g(u-s)h(s)\,ds\mu(du)
           \qquad\text{for all }A\in \cB(\bR^{d}).
  \end{align*}
  Therefore
  \begin{equation*}
    \int_{\bR^{d}}g(u-s)h(s)ds=0
    \qquad\text{for }\mu \text{-almost all }u\in U.
  \end{equation*}
  But $\mu \sim Leb^{d}$, consequently the previous equation holds for
  almost all $u\in U$. Therefore, from the proof of Theorem~13
  in~\cite{Sau14}, we obtain that $h=0,$ a contradiction.
\end{proof}

\begin{remark}
  Note that in the non-stationary case, we are able to show that
  equation \eqref{eqn5.6} implies that
  $\mu(\{ u\in U:\int_{\bR^{d}}f(u,s) h(s) c(ds) =0\}) =0$. However,
  in general it is not possible to verify from this that
  \begin{equation*}
    \int_{\bR^{d}}f(u,s) h(s) c(d s) =0
    \qquad\text{for all }u\in U,
  \end{equation*}
  which under Condition~\ref{conditionkernel} occurs if and only if
  $h=0$.
\end{remark}

\begin{example}[Ornstein-Uhlenbeck processes]
  Let $L$ be a centered L\'{e}vy process with characteristic triplet
  $(\gamma ,b,\rho)$, $f$ as in Example~\ref{exampleOU} and $\mu$
  being a $\sigma$-finite measure on $\bR$. Then \eqref{eqn5.4} holds
  for every $A\in \cB_{b}(\mu)$. Indeed, from
  Remark~\ref{remarkstationaryfubini} we only need to verify that
  $f\in \cL_{\Phi_{1}}$. This occurs (see Section 1) if and only if
  \begin{equation*}
    \int_{\bR}\int_{0}^{\infty }\left[ |xe^{-s}|\wedge |xe^{-s}|^{2}\right] ds\rho(dx)<\infty .
  \end{equation*}
  We have that
  \begin{align*}
    \MoveEqLeft[3]
    \int_{\bR}\int_{0}^{\infty }\left[
    |xe^{-s}|\wedge |xe^{-s}|^{2}\right] ds\rho(dx)
   \\
    ={} &
          \frac{1}{2}\int_{|x|\leq 1}|x|^{2}\rho(dx)
          +\int_{|x|>1}\int_{0}^{\log(|x|)}|xe^{-s}|ds\rho(dx)
    \\
        & +\int_{|x|>1}\int_{\log(|x|)}^{\infty }|xe^{-s}|^{2}ds\rho(dx)
    \\
    ={} & \frac{1}{2}\int_{\bR}1\wedge |x|^{2}\rho(dx)
          +\int_{|x|>1}[\frac{|x|^{2}-1}{|x|}]\rho(dx)<\infty ,
  \end{align*}
  due to the fact that $\int_{|x|>1}|x|\rho(dx)<\infty$ (because $L$
  has first moment). Therefore, the integrated process
  \begin{equation*}
    X^{\mu }(A)=\int_{A}X_{u}\mu(du),
    \qquad A\in \cB_{b}(\mu),
  \end{equation*}
  is well defined. In particular, if $A=[0,t]$ and $\mu(du)=du$, we
  get that
  \begin{align*}
    X_{t}^{\mu }
    \coloneqq{} &  X^{\mu }([0,t])
    \\
    = {} &\int_{0}^{t}X_{u}\,du
    \\
    = {} &L_{t}-(X_{t}-X_{0}),
           \qquad t\geq 0,
  \end{align*}
  the Langevin equation. Since $(X_{t}-X_{0})$ is independent of
  $L_{t}$, $(X_{t}^{\mu })_{t\geq 0}$ is \SD if and only if $X$ is \SD
  or $L$ is \SD.  Note that this result is true in general for any
  L\'{e}vy process for which $\int_{|x|>1}\log(|x|)\rho(dx)<\infty$,
  which means that the condition on $L$ in
  Theorem~\ref{theoremintegratedprocsd} is sufficient but not
  necessary.
\end{example}

\begin{example} [\lss process with a Gamma kernel]
  Let $L$ and $\mu$ be as in the previous example. Consider $f$ as in
  Example~\ref{examplegamma}. We want to check that \eqref{eqn5.4}
  holds for any $\mu$-bounded set. To do this, we observe that
  $\varphi_{\alpha }\in \cL_{\Phi_{0}}$ if and only if
  $\varphi_{\alpha }\in \cL_{\Phi_{1}}$, where $\varphi_{\alpha }$ is
  as in \eqref{eqn1.3.15}. Obviously if
  $\varphi_{\alpha }\in \cL_{\Phi_{1}}$ we have that
  $\varphi_{\alpha }\in \cL_{\Phi_{0}}$, so suppose that
  $\varphi_{\alpha }\in \cL_{\Phi_{0}}$. Then, from
  Example~\ref{examplegamma} necessarily the following two conditions
  are satisfied:
  \begin{enumerate}
  \item $\int_{\abs{x} >1}\log(\abs{x}) \rho(dx) <\infty $,
  \item One of the following conditions holds:
    \begin{enumerate}
    \item $\alpha >-1/2;$
    \item $\alpha =-1/2$, $b=0$ and
      $\int_{\abs{x} \leq 1}\abs{x}^{2}\abs{\log(\abs{x})} \rho(dx) <\infty ;$
    \item $\alpha \in(-1,-1/2)$, $b=0$ and
      $\int_{\abs{x} \leq 1}\abs{x}^{-1/\alpha }\rho(dx) <\infty .$
    \end{enumerate}
  \end{enumerate}

  Since $|\varphi_{\alpha }|\leq c_{1}\phi_{\alpha },$ where
  \begin{equation*}
    \phi_{\alpha }(s)\coloneqq
    \begin{cases}
      s^{\alpha }\indf_{\{0<s\leq 1\}}+e^{-s}\indf_{\{s>1\}}
      & \text{for }-1<\alpha <0;
      \\
      e^{-s}\indf_{\{s\geq 0\}} & \text{for }\alpha \geq 0,
    \end{cases}
  \end{equation*}
  we only need to check that in this case
  $\phi_{\alpha }\in \cL_{\Phi_{1}}$. If $\alpha >0$, from the
  previous example we obtain immediately that
  $\phi_{\alpha }\in \cL_{\Phi_{1}}$. Assume that
  $\alpha \in(-1,0)$. Obviously
  $b^{2}\int_{0}^{\infty }\phi_{\alpha }^{2}(s)ds<\infty$, so it
  suffices to show that
  $\int_{\bR}\int_{0}^{\infty }\left[ |x\phi_{\alpha }(s)|\wedge |x\phi_{\alpha }(s)|^{2}\right] ds\rho (dx)<\infty .$
  From the previous example
  \begin{equation*}
    \int_{\bR}\int_{1}^{\infty }
    \left[ \abs{x\phi_{\alpha }(s)}\wedge \abs{x\phi_{\alpha }(s)}^{2}\right] ds\rho(dx)
    \leq \int_{\bR}\int_{0}^{\infty }\left[ |xe^{-s}|\wedge |xe^{-s}|^{2}\right] ds\rho(dx)<\infty .
  \end{equation*}
  Moreover
  \begin{align*}
    \MoveEqLeft[3]
    \int_{\bR}\int_{0}^{1}\left[ |x\phi_{\alpha }(s)|\wedge |x\phi_{\alpha }(s)|^{2}\right] ds\rho(dx)
    \\
    ={} & \frac{1}{\alpha +1}\int_{|x|>1}|x|\rho(dx)
          +\int_{|x|\leq 1}\int_{0}^{|x|^{-\frac{1}{\alpha }}}|xs^{\alpha }|ds\rho(dx)
    \\
        & +\int_{|x|\leq 1}\int_{|x|^{-\frac{1}{\alpha }}}^{1}|xs^{\alpha}|^{2}ds\rho(dx)
    \\
    ={} & \frac{1}{\alpha +1}\int_{\bR}|x|^{-\frac{1}{\alpha
          }}\wedge|x|\rho(dx) +\frac{1}{2\alpha +1}\int_{|x|\leq
          1}(|x|^{2}-|x|^{-\frac{1}{\alpha }})\rho(dx)<\infty ,
  \end{align*}
  due to the conditions $1.$ and $2$. Thus
  $\phi_{\alpha }\in \cL_{\Phi_{1}}$.

  All above implies that in this case, the integrated process
  $X^{\mu }$ is well defined for any $\alpha >-1$. Now, for
  $\beta >-1$ consider the finite measure
  \begin{equation*}
    \mu(du)\coloneqq \varphi_{\beta }(u)\indf_{\{u\geq 0\}}du,
  \end{equation*}
  and consider the following integrated process
  \begin{align*}
    X_{t}^{\mu }
    \coloneqq {} & \int_{0}^{\infty }X_{t-u}\mu(du)
    \\
    ={} & \int_{-\infty }^{t}\varphi_{\beta }(t-u)X_{u}du,
          \qquad t\in\bR.
  \end{align*}
  From what we have shown above, we see that $X_{t}^{\mu }$ is well
  defined and for each $t\in \bR$ almost surely
  \begin{equation*}
    X_{t}^{\mu }
    =\int_{-\infty }^{t}
    e^{-(t-s)}\int_{s}^{t}(t-u)^{\beta}(u-s)^{\alpha }
    dudL_{s}.
  \end{equation*}
  Since
  \begin{equation*}
    \int_{s}^{t}(t-u)^{\beta }(u-s)^{\alpha }du
    = k_{\alpha ,\beta }(t-s)^{\beta+\alpha +1},
    \qquad t>s,
  \end{equation*}
  where
  $k_{\alpha ,\beta }\coloneqq \int_{0}^{1}x^{\alpha }(1-x)^{\beta }dx<\infty$,
  we have that for $-1<\alpha <0$, $\beta =-\alpha -1$ and $t\in \bR$,
  almost surely
  \begin{equation*}
    X_{t}^{\mu }=k_{\alpha }\int_{-\infty }^{t}e^{-(t-s)}dL_{s},
  \end{equation*}
  with $k_{\alpha }=k_{\alpha ,-\alpha -1}$, i.e.\ $X^{\mu }$ is an OU
  process.  Here we see immediately that
  Theorem~\ref{theoremintegratedprocsd} holds.  Let us remark, that
  the technique of Gamma convolutions has been first used
  in~\cite{BNBV13a} and it has also been applied in~\cite{Sau14}.
\end{example}

\section{\ID field-valued processes}
\label{sec:id-field-valued}

In this section we build L\'{e}vy processes whose realizations are \ID
fields. We propose a way to define stochastic integrals with respect
to such processes and in particular we show that any \SD field can be
expressed as a stochastic integral with respect to an element of this
class of L\'{e}vy processes.

\subsection{\ID field-valued L\'{e}vy processes}

In this part we construct a process which has independent and
stationary increments taking values in the space of random
fields. Thus, in analogy with L\'{e}vy processes in $\bR^{d}$, these
will be called \ID field-valued L\'{e}vy processes.

Let $X=(X_{u})_{u\in U}$ be an \ID field with characteristic triplet
$(\Gamma ,B,\nu)$ and suppose that $\nu$ does not charge zero. For any
$\widehat{u}\in \widehat{U}$ the law of $X_{\widehat{u}}$ belongs to
$\ID(\bR^{\widehat{u}})$ and has characteristic triplet
$(\pi_{\widehat{u}}(\Gamma),B_{\widehat{u}},\nu \circ \pi_{\widehat{u}}^{-1})$
with $B_{\widehat{u}}=(B(u,v))_{u,v\in \widehat{u}}$. Consider
$L_{\widehat{u}}^{X}$ to be a two-sided L\'{e}vy process in
$\bR^{\widehat{u}},$ such that
$L_{\widehat{u}}^{X}(1)\overset{d}{=}X_{\widehat{u}}$. The cumulant
function of $L_{\widehat{u}}^{X}$ is given by
\begin{align}
  \rC\{\theta \ddagger L_{\widehat{u}}^{X}(t)\}
  &{} =|t|\rC\{\theta\ddagger X_{\widehat{u}}\}
    \label{eqn6.1}
  \\
  &{} =i\langle \theta ,\Gamma_{\widehat{u}}^{t}\rangle
    -\tfrac{1}{2}\langle\theta ,B_{\widehat{u}}^{t}\theta \rangle
    +\int_{\bR^{\widehat{u}}}\left[ e^{i\langle \theta ,x\rangle
    } -1-i\langle \tau_{\#\widehat{u}}(x),\theta \rangle \right]
    \nu_{\widehat{u}}^{t}(dx), \notag
\end{align}
where $\theta\in \bR^{\widehat{u}},t\in \bR$, and
\begin{equation*}
  \begin{split}
    \Gamma_{\widehat{u}}^{t} ={} & |t|\pi_{\widehat{u}}(\Gamma);
    \\
    B_{\widehat{u}}^{t} ={} & |t|B_{\widehat{u}};
    \\
    \nu_{\widehat{u}}^{t} ={} & |t|\nu \circ \pi_{\widehat{u}}^{-1}.
  \end{split}
\end{equation*}
Since the system
$(\pi_{\widehat{u}}(\Gamma),B_{\widehat{u}},\nu \circ \pi_{\widehat{u}}^{-1})$
is consistent, we have that for any $t\in \bR$ the system
$(\Gamma_{\widehat{u}}^{t},B_{\widehat{u}}^{t},\nu_{\widehat{u}}^{t})$
is consistent as well, thus from Theorem~\ref{thmmasterlevymeasure}
there exists a unique (in law) \ID field $L^{X}(t)$ with
characteristic triplet $(|t|\Gamma ,|t|B,|t|\nu)$. The process
$L^{X}=(L^{X}(t))_{t\in \bR}$ will be called \emph{\ID field-valued
  L\'{e}vy process}. The motivation of this name is given in the
Proposition~\ref{Proplevyidvalued} below. Before we present such
result, we need to introduce some notation.  For any non empty index
set $U$, let
\begin{equation}
  (\bR^{U})^{\prime }\coloneqq \{y\in \bR^{U}:y(u)=0
  \text{ for all but finitely many }u\in U\}.
  \label{eqn6.3.0}
\end{equation}
Define the pseudo bilinear form
$\langle \nullarg ,\nullarg \rangle_{\bR^{U}}\colon\bR^{U}\times (\bR^{U})^{\prime }\rightarrow \bR$
as
\begin{equation*}
  \langle x,y\rangle_{\bR^{U}}\coloneqq \sum\limits_{u\in U}x(u)y(u).
\end{equation*}
Note that $\langle \nullarg ,\nullarg \rangle_{\bR^{U}}$ is well
defined.  In particular, if $U$ is countable,
$\langle \nullarg ,\nullarg \rangle_{\bR^{U}}$ is the restriction of
the inner product in $l^{2}$ to $(\bR^{U})^{\prime }$.

\begin{proposition}
  \label{Proplevyidvalued}
  The process $L^{X}=(L^{X}(t))_{t\in \bR}$ has independent and
  stationary increments and the process
  \begin{equation}
    \widetilde{L}_{\bm{\cdot} }^{X}(t)
    \coloneqq \pi_{\bm{\cdot} }(t\Gamma) +W_{\bm{\cdot} }(t)
    +\int_{\bR^{U}}\pi_{\bm{\cdot} }(x) \left[ N(dx,ds) -\indf_{\{
        \abs{\pi_{\bm{\cdot} }(x)} \leq 1\} }\widetilde{\nu }(dx,ds) \right]
    ,\quad t\in \bR,  \label{eqn6.3}
  \end{equation}
  is a version of $L^{X}$. Here $W_{\bm{\cdot} }(t)$ is the Gaussian
  process with covariance matrix $\abs{t}(B(u,v))_{u,v\in U}$ and
  $N(dx,ds)$ is a Poisson measure independent of $W_{\bm{\cdot} }(t) $
  with intensity $\widetilde{\nu }(dx,ds) =\nu(dx) ds$ . Moreover, we
  have that
  $\lim_{t\rightarrow s}\langle L^{X}(t) -L^{X}(s) ,y\rangle_{\bR^{U}}$
  exists almost surely and
  \begin{equation}
    \plim_{t\rightarrow s}\langle L^{X}(t) -L^{X}(s),y\rangle_{\bR^{U}}=0,
    \qquad\text{for all }y\in(\bR^{U})^{\prime }.  \label{eqn6.4}
  \end{equation}
\end{proposition}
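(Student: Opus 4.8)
The plan is to establish the three assertions of Proposition~\ref{Proplevyidvalued} in turn: (i) independent and stationary increments, (ii) that $\widetilde L^X$ in \eqref{eqn6.3} is a version of $L^X$, and (iii) the almost sure existence and the $\bP$-limit statement \eqref{eqn6.4}. For (i), I would argue at the level of finite-dimensional projections. Fix $\widehat u\in\widehat U$ and note that by construction $L^X_{\widehat u}$ is a genuine two-sided L\'evy process in $\bR^{\widehat u}$ with $L^X_{\widehat u}(1)\overset d=X_{\widehat u}$, so for any $t_0<t_1<\cdots<t_k$ the increments $L^X_{\widehat u}(t_j)-L^X_{\widehat u}(t_{j-1})$ are independent and $L^X_{\widehat u}(t_j)-L^X_{\widehat u}(t_{j-1})\overset d=L^X_{\widehat u}(t_j-t_{j-1})$. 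Since this holds for every $\widehat u$ and the finite-dimensional distributions of a random field determine its law (Proposition~\ref{propsystemchtrip} and Theorem~\ref{thmmasterlevymeasure}), the field-valued increments $L^X(t_j)-L^X(t_{j-1})$ are independent, and $L^X(t)-L^X(s)$ has characteristic triplet $(|t-s|\Gamma,|t-s|B,|t-s|\nu)$, which is exactly that of $L^X(t-s)$; hence stationarity of increments.

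For (ii), I would apply the L\'evy--It\^o representation (the Proposition of Rosi\'nski stated just before Proposition~\ref{propdilatationsd}) to the \ID field $L^X(t)$, which has characteristic triplet $(|t|\Gamma,|t|B,|t|\nu)$ and whose master L\'evy measure $|t|\nu$ does not charge zero whenever $\nu$ does not. This directly yields that $\widetilde L^X_{\bm\cdot}(t)$ as written in \eqref{eqn6.3} is a well-defined version of $L^X(t)$ for each fixed $t$; the only point requiring care is to set up the Poisson random measure $N(dx,ds)$ on $\bR^U\times\bR$ with intensity $\nu(dx)\,ds$ jointly in $(x,s)$, so that $t\mapsto\widetilde L^X_{\bm\cdot}(t)$ is simultaneously a version of $L^X$ as a process; this is the standard construction of a L\'evy process from its jump measure, carried out one finite-dimensional projection at a time and glued using consistency, exactly as in the proof of Theorem~\ref{thmmasterlevymeasure}.

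For (iii), fix $y\in(\bR^U)'$ and let $\widehat u=\{u\in U: y(u)\neq0\}$, a finite set. Then $\langle L^X(t)-L^X(s),y\rangle_{\bR^U}=\langle L^X_{\widehat u}(t)-L^X_{\widehat u}(s),\pi_{\widehat u}(y)\rangle$ is a one-dimensional L\'evy process evaluated at the increment $t-s$; by c\`adl\`ag regularity of L\'evy processes the limit as $t\to s$ exists almost surely (equal to the left or right limit), and since a L\'evy process is continuous in probability, $\plim_{t\to s}$ of the increment is $0$, giving \eqref{eqn6.4}. The main obstacle, I expect, is item (ii): making precise that a \emph{single} Poisson random measure on the path space $\bR^U\times\bR$ drives \emph{all} projections coherently, i.e.\ that the representation \eqref{eqn6.3} is a version of the field-valued process $L^X$ and not merely, for each fixed $t$, a version of the field $L^X(t)$. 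This requires invoking the consistency of the family $(\nu^t_{\widehat u})_{\widehat u}$ together with the uniqueness-in-law clause of Theorem~\ref{thmmasterlevymeasure}, and checking that the projections of \eqref{eqn6.3} reproduce the L\'evy--It\^o decomposition of each $L^X_{\widehat u}$; the remaining steps are routine.
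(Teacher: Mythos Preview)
Your proposal is correct and, for parts (i) and (iii), matches the paper's proof essentially verbatim: the paper also reduces to finite-dimensional projections $L^X_{\widehat u}$, noting that each is a genuine L\'evy process in $\bR^{\widehat u}$, and for (iii) writes $\langle L^X(t)-L^X(s),y\rangle_{\bR^U}=\langle L^X_{\widehat u}(t)-L^X_{\widehat u}(s),y_{\widehat u}\rangle_{\bR^{\widehat u}}$ for the finite support $\widehat u$ of $y$, then invokes the known path regularity and stochastic continuity of L\'evy processes.

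For part (ii), the paper takes a shorter route than yours. Instead of applying the L\'evy--It\^o representation to $L^X(t)$ separately for each fixed $t$ and then arguing consistency in $t$, the paper simply observes that the process $\widetilde L^X$ in \eqref{eqn6.3}, being built from a single Gaussian process $W_{\bm\cdot}(t)$ and a single Poisson random measure $N(dx,ds)$ on $\bR^U\times\bR$, automatically has independent and stationary increments, and that $\widetilde L^X_{\bm\cdot}(1)\overset d=X\overset d=L^X_{\bm\cdot}(1)$. Since a process with independent stationary increments is determined in law by its time-$1$ distribution, $\widetilde L^X$ and $L^X$ have the same law as processes. This dispatches in one sentence exactly the ``main obstacle'' you flag; your per-$t$-plus-consistency argument would also work, but is more than what is needed.
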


\begin{proof}
  By construction for any $\widehat{u}\in \widehat{U},$
  $L_{\widehat{u}}^{X}(\nullarg)$ is a L\'{e}vy process in law in
  $\bR^{\widehat{u}}$.  Therefore $L_{\widehat{u}}^{X}(\nullarg)$ has
  independent and stationary increments for any
  $\widehat{u}\in \smash{\widehat{U}}$, so $L^{X}$ does as well.
  Since $\smash{L_{\widehat{u}}^{X}(\nullarg)}$ has independent and
  stationary increments and
  $\smash{\widetilde{L}_{\bm{\cdot}}^{X}(1) \overset{d}{=}X\overset{d}{=}L_{\bm{\cdot} }^{X}(1)}$,
  we have that $L^{X}$ and $\widetilde{L}^{X}$ have the same
  law. Finally, since for any $y\in(\bR^{U})^{\prime }$, there exists
  $\widehat{u}\in \widehat{U}$ such that $y(u)=0$ for all
  $u\in \widehat{u}^{c},$ we deduce
  \begin{align*}
    \langle L^{X}(t)-L^{X}(s),y\rangle_{\bR^{U}}
    ={} & \sum\limits_{u\in\widehat{u}}(L_{u}^{X}(t)-L_{u}^{X}(s))y(u)
    \\
    ={} & \langle
          L_{\widehat{u}}^{X}(t)-L_{\widehat{u}}^{X}(s),y_{\widehat{u}}\rangle_{\bR^{\widehat{u}}},
  \end{align*}
  where $\langle \nullarg ,\nullarg \rangle_{\bR^{\widehat{u}}}$
  denotes the inner product in $\bR^{\widehat{u}}$. This implies
  necessarily that the limit
  $\lim_{t\rightarrow s}\langle L^{X}(t)-L^{X}(s),y\rangle_{\bR^{U}}$
  exists and that \eqref{eqn6.4} holds.
\end{proof}

\begin{remark}
  Observe that in general, the concept of c\`{a}dl\`{a}g paths cannot
  be defined for $L^{X}$. It is because $\bR^{U}$ is not in general
  metric and the topologies that can be defined in such space are not
  tractable. Therefore, the object $\lim_{t\rightarrow s}L^{X}(t)$ may
  not be well defined.  However, if $U\subset \bR^{d}$ and for any
  $t\in \bR$, $L^{X}(t) \in \cL^{2}(\bR) ,$ \eqref{eqn6.4} is
  equivalent to continuity in probability under the $\cL^{2}$-norm. It
  is an open problem to verify that if $L^{X}(t) \in D(U,\bR)$ (the
  Skorohod space) for every $t$ and \eqref{eqn6.4} holds, then $L^{X}$
  is c\`{a}dl\`{a}g under the norm in $D(U,\bR)$.
\end{remark}

\subsection{Integration with respect to \ID field-valued L\'{e}vy
  processes}

In this part, using as a starting point the stochastic integration of
deterministic functions with respect to L\'{e}vy bases on $\bR^{d}$
(see \cite{RajRos89} or Section~\ref{stochintsec} and for the
$\bR^{d}$-valued case see~\cite{BNStel11}), we define the stochastic
integral of an operator from $\bR^{U}$ into itself with respect to an
\ID field-valued L\'{e}vy process.

Let $X=(X_{u})_{u\in U}$ be an \ID field with characteristic triplet
$(\Gamma ,B,\nu)$ and suppose that $\nu$ does not charge
zero. Consider $L^{X}$ to be the L\'{e}vy process \ID field-valued
constructed in the previous subsection. Let
$f\colon\bR \rightarrow \bR$ be a measurable function integrable with
respect to $L_{\widehat{u}}^{X}$, i.e.\
\begin{gather*}
  \int_{\bR}\abs*{
    f(s)\Gamma_{\widehat{u}}
    +\int_{\bR^{ \widehat{u}}}(\tau_{\#\widehat{u}}(f(s)x)-f(s)
    \tau_{\#\widehat{u}}(x))\nu _{\widehat{u}}(dx)} ds<
  \infty ;
  \\
  \int_{\bR^{\widehat{u}}}f^{2}(s)dsB_{\widehat{u}}<\infty ;
  \\
  \int_{\bR}\int_{\bR^{\widehat{u}}}
  (1\wedge \abs{f(s)x}^{2})\nu_{\widehat{u}}(dx)ds<\infty .
\end{gather*}
For the sake of brevity, we introduce the notation
\begin{equation*}
  I_{\widehat{u}}(f\ddagger X)
  \coloneqq \int_{\bR^{\widehat{u}}}f(s)dL_{\widehat{u} }^{X}(s),
  \qquad \widehat{u}\in \widehat{U}.
\end{equation*}
Then $I_{\widehat{u}}(f\ddagger X)$ is \ID with characteristic triplet
$(\Gamma_{ \widehat{u}}^{I(f\ddagger X)},B_{\widehat{u}}^{I(f\ddagger X)},\nu_{\widehat{u}}^{I(f\ddagger X)})$
given by
\begin{align}
  \Gamma_{\widehat{u}}^{I(f\ddagger X)}
  & ={} \int_{\bR}\Bigl[f(s)\Gamma_{\widehat{u}}
    +\int_{\bR^{\widehat{u}}}(\tau_{\#\widehat{u}}(f(s)x)-f(s)
    \tau_{\#\widehat{u}}(x))\nu_{\widehat{u}}(dx)\Bigr]ds,
    \label{eqn6.5} \\
  B_{\widehat{u}}^{I(f\ddagger X)}
  & ={} \int_{\bR^{\widehat{u}}}f^{2}(s)dsB_{ \widehat{u}},  \notag \\
  \nu_{\widehat{u}}^{I(f\ddagger X)}(A)
  & ={} \int_{\bR}\int_{\bR^{\widehat{u}
    }}\indf_{A}(f(s)x)\nu_{\widehat{u}}(dx)ds,
    \qquad A\in\cB(\bR^{\widehat{u}}) .  \notag
\end{align}
This procedure generates a system of characteristic triplets
$(\Gamma_{ \widehat{u}}^{I(f\ddagger X)},B_{\widehat{u}}^{I(f\ddagger X)},\nu_{\widehat{u}}^{I(f\ddagger X)})$
which, as the next proposition shows, is consistent.

\begin{proposition}
  The system of characteristic triplets
  $(\Gamma_{\widehat{u}}^{I(f\ddagger X)},B_{ \widehat{u}}^{I(f\ddagger X)},\nu_{\widehat{u}}^{I(f\ddagger X)})$
  is consistent.
\end{proposition}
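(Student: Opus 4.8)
The plan is to reduce the claim to the three defining relations of a consistent system in the sense of Proposition~\ref{propsystemchtrip}, and to verify each of them directly, using that the system $(\Gamma_{\widehat{u}},B_{\widehat{u}},\nu_{\widehat{u}})_{\widehat{u}\in\widehat{U}}$ entering the right-hand sides of \eqref{eqn6.5} is already consistent: it is the system of characteristic triplets of $X$ (equivalently of the \ID field-valued L\'{e}vy process $L^{X}$), and $\nu$ is its master L\'{e}vy measure. Since each $I_{\widehat{u}}(f\ddagger X)$ is already known to be \ID, the $\nu_{\widehat{u}}^{I(f\ddagger X)}$ are automatically L\'{e}vy measures, so only the coordinate and projection relations need to be checked.

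For the Gaussian part this is immediate: the second line of \eqref{eqn6.5} gives $B_{\widehat{u}}^{I(f\ddagger X)}=\bigl(\int_{\bR}f^{2}(s)\,ds\bigr)\,(B(u,v))_{u,v\in\widehat{u}}$, so the function $B^{I(f\ddagger X)}(u,v)\coloneqq\bigl(\int_{\bR}f^{2}(s)\,ds\bigr)B(u,v)$ does the job, and it remains non-negative definite because one only rescales by a non-negative constant. For the drift I would argue coordinatewise. Fix $\widehat{u}$ and $u\in\widehat{u}$; since the truncation functions $\tau_{n}$ used in this paper act coordinatewise, the $u$-th coordinate of the inner integrand in the first line of \eqref{eqn6.5} equals $\tau_{1}(f(s)\,\pi_{u}(x))-f(s)\,\tau_{1}(\pi_{u}(x))$, which depends on $x$ only through $\pi_{u}(x)$ and vanishes when $\pi_{u}(x)=0$. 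Consequently, integrating against $\nu_{\widehat{u}}$ and pushing forward onto the $u$-coordinate, \eqref{eqn1.0} applies (legitimately, precisely because the integrand is zero at the origin, cf.\ Remark~\ref{remarropsystemchtrip}) and shows that this $u$-th coordinate equals $\int_{\bR}\bigl(\tau_{1}(f(s)x)-f(s)\tau_{1}(x)\bigr)\nu_{\{u\}}(dx)$, independently of $\widehat{u}$. Hence $\Gamma^{I(f\ddagger X)}_{u}\coloneqq\int_{\bR}\bigl[f(s)\pi_{u}(\Gamma)+\int_{\bR}(\tau_{1}(f(s)x)-f(s)\tau_{1}(x))\,\nu_{\{u\}}(dx)\bigr]ds$ defines a function $U\to\bR$ with $\Gamma_{\widehat{u}}^{I(f\ddagger X)}=\pi_{\widehat{u}}(\Gamma^{I(f\ddagger X)})$, and its uniqueness follows as in Proposition~\ref{propsystemchtrip}.

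For the L\'{e}vy measures I would take $\widehat{u}\subset\widehat{v}$ and $A\in\cB(\bR^{\widehat{u}}\setminus 0^{\widehat{u}})$, and use $f(s)\,\pi_{\widehat{v}\widehat{u}}(x)=\pi_{\widehat{v}\widehat{u}}(f(s)x)$ to rewrite the third line of \eqref{eqn6.5} as
\begin{equation*}
  \nu_{\widehat{v}}^{I(f\ddagger X)}\bigl(\pi_{\widehat{v}\widehat{u}}^{-1}(A)\bigr)
  =\int_{\bR}\int_{\bR^{\widehat{v}}}\indf_{A}\bigl(f(s)\,\pi_{\widehat{v}\widehat{u}}(x)\bigr)\,\nu_{\widehat{v}}(dx)\,ds .
\end{equation*}
For $s$ with $f(s)=0$ the inner integrand is $\indf_{A}(0^{\widehat{u}})=0$, so such $s$ contribute nothing on either side; for $s$ with $f(s)\neq0$ the inner set is $\pi_{\widehat{v}\widehat{u}}^{-1}\bigl(f(s)^{-1}A\bigr)$ with $f(s)^{-1}A$ again a Borel subset of $\bR^{\widehat{u}}\setminus 0^{\widehat{u}}$, so \eqref{eqn1.0} for the original system gives $\nu_{\widehat{v}}\bigl(\pi_{\widehat{v}\widehat{u}}^{-1}(f(s)^{-1}A)\bigr)=\nu_{\widehat{u}}\bigl(f(s)^{-1}A\bigr)=\int_{\bR^{\widehat{u}}}\indf_{A}(f(s)x)\,\nu_{\widehat{u}}(dx)$; integrating over $s$ recovers $\nu_{\widehat{u}}^{I(f\ddagger X)}(A)$, which is exactly \eqref{eqn1.0} for the transformed system. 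The step I expect to require the most care is this bookkeeping around the origin: \eqref{eqn1.0} and the related identities hold only off $0^{\widehat{u}}$, so at each use one must make sure the sets involved avoid the origin — which they do because $A$ does and dilation by the nonzero scalar $f(s)$ preserves that. Once the three relations are in place, the consistency of $(\Gamma_{\widehat{u}}^{I(f\ddagger X)},B_{\widehat{u}}^{I(f\ddagger X)},\nu_{\widehat{u}}^{I(f\ddagger X)})$ is exactly what Proposition~\ref{propsystemchtrip} asserts.
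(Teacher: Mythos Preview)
Your proof is correct and follows essentially the same approach as the paper: the L\'evy-measure computation is identical in spirit (the paper writes it as a single change-of-variables line without separating the cases $f(s)=0$ and $f(s)\neq 0$, but the content is the same). The only difference is that the paper disposes of the drift and Gaussian parts in one sentence by appealing to the form of \eqref{eqn6.5} and Proposition~\ref{propsystemchtrip}, whereas you carry out those verifications explicitly; your coordinatewise argument for $\Gamma$ is exactly the reasoning implicit in that appeal.
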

\begin{proof}
  Let $\widehat{v},\widehat{u}\in \widehat{U}$ with
  $\widehat{v}\subset \widehat{u}$. From
  Proposition~\ref{propsystemchtrip} and \eqref{eqn6.5}, we only need
  to check that
  \begin{equation*}
    \nu_{\widehat{v}}^{I(f\ddagger X)}
    =\nu_{\widehat{u}}^{I(f\ddagger X)}\circ\pi_{\widehat{v}\widehat{u}}^{-1}
    \qquad\text{on }\cB(\bR^{\widehat{v} }\setminus 0^{\widehat{v}}) .
  \end{equation*}
  From \eqref{eqn6.5}, it follows that for any
  $A\in \cB( \bR^{\widehat{v}}\setminus 0^{\widehat{v}})$
  \begin{align*}
    \nu_{\widehat{u}}^{I(f\ddagger X)}\circ\pi_{\widehat{v}\widehat{u}}^{-1}(A)
    ={} & \int_{\bR}\int_{\bR^{\widehat{u}}}
          \indf_{A}(f(s)\pi_{\widehat{v}\widehat{u}}(x))\nu_{\widehat{u} }(dx)ds
    \\
    ={} & \int_{\bR}\int_{\bR^{\widehat{v}}}\indf_{A}(f(s)x)\nu_{\widehat{v}}(dx)ds
    \\
    ={} & \nu_{\widehat{v}}^{I(f\ddagger X)}(A) ,
  \end{align*}
  which is enough.
\end{proof}

These results mean that we can lift the system of finite-dimensional
triplets
$(\Gamma_{\widehat{u} }^{I(f\ddagger X)},B_{\widehat{u}}^{I(f\ddagger X)},\nu_{\widehat{u}}^{I(f\ddagger X)})$
to a triplet
$(\Gamma^{I(f\ddagger X)},B^{I(f\ddagger X)},\nu^{I(f\ddagger X)})$ of
an \ID field, let's say $I(f\ddagger X)$. In this case, we define the
\emph{stochastic integral of} $f$ \emph{with respect to }$L^{X}$ to be
the \ID field given by
\begin{equation*}
  \int_{\bR}f(s) dL^{X}(s) \coloneqq I(f\ddagger X).
\end{equation*}
Note that, from \eqref{eqn6.5},
\begin{equation*}
  \nu^{I(f\ddagger X)}(A) =\int_{\bR}\int_{\bR^{U}} \indf_{A}(f(s)x)\nu(dx)ds,\qquad A\in \cB(\bR^{U}) .
\end{equation*}
Thus, if $\nu$ does not charge zero, $\nu^{I(f\ddagger X)}$ does not
either. Hence in this case, $\nu^{I(f\ddagger X)}$ is the master
L\'{e}vy measure of $I(f\ddagger X)$. Moreover, a modification of
$I(f\ddagger X)$ can be obtained by the L\'{e}vy-It\^{o}
representation for \ID fields.

\begin{proposition}
  \label{example2sect6}
  Let $X=(X_{u})_{u\in U}$ be an infinitely divisible field with
  characteristic triplet $(\Gamma ,B,\nu)$ such that $\nu$ does not
  charge zero and let $L^{X}$ be the \ID field-valued L\'{e}vy process
  induced by $X$. For a given $\widehat{u}\in \widehat{U}$ denote by
  $\cL_{\Phi_{0}}^{\widehat{u}}$ the Musiela-Orlicz space induced by
  the triplet of $L_{\widehat{u}}^{X}$ (see
  Section~\ref{stochintsec}). Consider $f\colon\bR\rightarrow\bR$ such
  that $f\in \cL_{\Phi_{0}}^{\widehat{u}}$ for every
  $\widehat{u}\in \widehat{U}$. Then the process
  \begin{align*}
    \widetilde{I}_{u}(f\ddagger X)
    \coloneqq{} & \int_{\bR} f(s) ds \pi_{u}(\Gamma)
                  +\int_{\bR} f(s) W_{u}(ds)\\
                & +\int_{\bR}\int_{\bR}
                  f(s)x \left[
                  N_{u}(dx,ds) - \indf_{\{\abs{f(s)x} \leq
                  1\} }\nu_{u}(dx)ds\right] ,
  \end{align*}
  is a modification of $I(f\ddagger X)$. Here $N_{u}$ has compensator
  $\nu_{u}(dx)ds$.
\end{proposition}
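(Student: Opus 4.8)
The plan is to verify that for every $\widehat{u}\in\widehat{U}$ the random vector $(\widetilde{I}_{u}(f\ddagger X))_{u\in\widehat{u}}$ is infinitely divisible with exactly the characteristic triplet $(\Gamma_{\widehat{u}}^{I(f\ddagger X)},B_{\widehat{u}}^{I(f\ddagger X)},\nu_{\widehat{u}}^{I(f\ddagger X)})$ recorded in \eqref{eqn6.5}. Granting this for all $\widehat{u}$, the consistency of the system $(\Gamma_{\widehat{u}}^{I(f\ddagger X)},B_{\widehat{u}}^{I(f\ddagger X)},\nu_{\widehat{u}}^{I(f\ddagger X)})$ established above, together with the uniqueness part of Theorem~\ref{thmmasterlevymeasure} (applicable because $\nu^{I(f\ddagger X)}$ does not charge zero, as noted before the statement), identifies the law of $\widetilde{I}(f\ddagger X)$ with that of $I(f\ddagger X)$; moreover, feeding that same triplet into the L\'evy-It\^o representation for \ID fields, with the Poisson part drawn from the image of $N$ under $(x,s)\mapsto(f(s)x,s)$ and the Gaussian part taken to be $\int_{\bR}f(s)W(ds)$, produces, after a change of variables, precisely the field $\widetilde{I}(f\ddagger X)$, so $\widetilde{I}(f\ddagger X)$ is indeed a modification of $I(f\ddagger X)$.

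First I would check that the three summands in $\widetilde{I}_{u}(f\ddagger X)$ are well defined. Specialising $f\in\cL_{\Phi_{0}}^{\widehat{u}}$ to singletons $\widehat{u}=\{u\}$ (and keeping $\widehat{u}$ general for the joint statement) yields, via the triplet of $L_{\widehat{u}}^{X}$, the three integrability conditions displayed just before the statement: $\int_{\bR}f^{2}(s)\,ds<\infty$ makes $\int_{\bR}f(s)W_{u}(ds)$ a genuine Wiener integral, Gaussian with covariance rate $(B(u,v))$; $\int_{\bR}\int_{\bR}(1\wedge|f(s)x|^{2})\nu_{u}(dx)\,ds<\infty$ ensures that $s\mapsto f(s)x$ has almost surely finitely many jumps of modulus exceeding $1$ while its compensated small jumps converge in mean square, so the compensated Poisson integral exists; and the first condition controls the drift together with the recentering correction it absorbs.

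Next I would compute the joint cumulant function. Fix $\widehat{u}$ and $\theta\in\bR^{\widehat{u}}$. Using that $W_{\widehat{u}}=(W_{u})_{u\in\widehat{u}}$ is Brownian in $\bR^{\widehat{u}}$ with covariance rate $B_{\widehat{u}}=(B(u,v))_{u,v\in\widehat{u}}$, that the projection of $N$ onto the coordinates $\widehat{u}$ is a Poisson random measure on $\bR^{\widehat{u}}\times\bR$ with intensity $\nu_{\widehat{u}}(dx)\,ds$ --- here \eqref{eqn1.3.4} enters, and the possible atom of $\nu\circ\pi_{\widehat{u}}^{-1}$ at $0^{\widehat{u}}$ is annihilated by the factor $\pi_{\widehat{u}}(x)$ in the integrand, hence harmless --- and the independence of $W$ and $N$, the L\'evy-Khintchine formula for a sum of an independent Gaussian and a compensated Poisson integral gives
\begin{multline*}
  \rC\bigl\{\theta\ddagger(\widetilde{I}_{u}(f\ddagger X))_{u\in\widehat{u}}\bigr\}
  = i\Bigl\langle\theta,\int_{\bR}f(s)\,ds\,\pi_{\widehat{u}}(\Gamma)\Bigr\rangle
  -\tfrac12\int_{\bR}f^{2}(s)\,ds\,\langle\theta,B_{\widehat{u}}\theta\rangle \\
  + \int_{\bR}\int_{\bR^{\widehat{u}}}\Bigl(e^{i\langle\theta,f(s)x\rangle}-1-i\sum_{u\in\widehat{u}}\theta_{u}f(s)x_{u}\indf_{\{|f(s)x_{u}|\leq 1\}}\Bigr)\nu_{\widehat{u}}(dx)\,ds.
\end{multline*}
The substitution $z=f(s)x$ in the last integral turns the Gaussian and L\'evy parts into $\int_{\bR}f^{2}(s)\,ds\,B_{\widehat{u}}$ and into the image of $\nu_{\widehat{u}}(dx)\,ds$ under $(x,s)\mapsto f(s)x$, matching $B_{\widehat{u}}^{I(f\ddagger X)}$ and $\nu_{\widehat{u}}^{I(f\ddagger X)}$; what remains is to reconcile the drifts, i.e.\ to recognise that the gap between recentering the jump $f(s)x$ at scale $1$ and scaling the canonical recentering $\tau_{\#\widehat{u}}(x)$ of $L^{X}$ by $f(s)$ is exactly the term $\int_{\bR^{\widehat{u}}}\bigl(\tau_{\#\widehat{u}}(f(s)x)-f(s)\tau_{\#\widehat{u}}(x)\bigr)\nu_{\widehat{u}}(dx)$ occurring in \eqref{eqn6.5}.

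I expect this last reconciliation, with its accompanying convergence bookkeeping, to be the main obstacle. When $f$ is unbounded the integral $\int_{\bR}\int_{\bR^{\widehat{u}}}f(s)x\,\indf_{\{|f(s)x|\leq 1\}}\,\nu_{\widehat{u}}(dx)\,ds$ need not converge absolutely, so the drift comparison must be carried out on the level of the already-convergent compensated integral, exploiting that the integrand $\tau_{\#\widehat{u}}(f(s)x)-f(s)\tau_{\#\widehat{u}}(x)$ of the correction term vanishes on a neighbourhood of $0^{\widehat{u}}$ --- there $\tau_{\#\widehat{u}}$ acts as the identity on both of its arguments --- and is therefore $\nu_{\widehat{u}}\otimes ds$-integrable by the first of the three conditions. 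Once the triplet of $(\widetilde{I}_{u}(f\ddagger X))_{u\in\widehat{u}}$ has been identified with \eqref{eqn6.5} for every $\widehat{u}$, the reduction described in the first paragraph finishes the proof.
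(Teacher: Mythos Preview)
Your approach is correct but takes a different route from the paper's. The paper argues pathwise: it invokes the L\'evy--It\^o representation of $L^X$ established in Proposition~\ref{Proplevyidvalued} (equation~\eqref{eqn6.3}), writes the increment $L_u^X(t)-L_u^X(s)$ as drift plus Gaussian plus compensated-jump part, and then integrates $f$ against each piece. Since by definition $I_{\widehat u}(f\ddagger X)=\int_{\bR}f(s)\,dL_{\widehat u}^X(s)$ in the Rajput--Rosi\'nski sense, and integration against a L\'evy--It\^o-decomposed integrator acts componentwise (with the usual change of truncation level absorbed in the drift), the output is $\widetilde I_u(f\ddagger X)$ almost surely for each $u$, which is the modification claim in one line.

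You instead compute the finite-dimensional characteristic triplets of $\widetilde I(f\ddagger X)$ from scratch and match them against \eqref{eqn6.5}. This is heavier---the drift reconciliation you correctly flag as the main obstacle is exactly the bookkeeping the paper's route sidesteps by appealing to the already-packaged integral $\int f\,dL_{\widehat u}^X$---but it has the merit of being self-contained: you do not need to quote that stochastic integration against a decomposed L\'evy process distributes over the decomposition, which the paper leaves implicit. Both arguments rest on the same integrability hypotheses $f\in\cL_{\Phi_0}^{\widehat u}$; the paper's is shorter because it recycles Proposition~\ref{Proplevyidvalued} and the construction of $I(f\ddagger X)$ rather than redoing the cumulant computation, while yours makes explicit precisely where each of the three integrability conditions is consumed.
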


\begin{proof}
  By the L\'{e}vy-It\^{o} decomposition, almost surely for $t>s$
  \begin{align*}
    L_{u}^{X}(t) -L_{u}^{X}(s)
    ={} &(t-s) \pi_{u}(\Gamma) +(W_{u}(t) - W_{u}(s)) \\
        & + \int_{s}^{t}\int_{\bR^{U}}
          x \left[ N_{u}(dx,ds)
          - \indf_{\{ \abs{f(s)x} \leq 1\} }\nu_{u}(dx)ds
          \right] .
  \end{align*}
  Therefore, for any $u\in U,$ almost surely
  $I_{u}(F\ddagger X)=\int_{\bR }f(s) dL_{u}^{X}(s) =\widetilde{I}_{u}( F\ddagger X)$,
  as required.
\end{proof}

\begin{remark}
  The procedure above allows to extend the class of integrands to
  linear operators as follows: Let $(f_{u})_{u\in U}$ be a family of
  measurable functions. For every $\widehat{u}\in \widehat{U}$, take
  $F_{\widehat{u}}\colon\bR\rightarrow \bM_{\widehat{u}}(\bR)$ with
  $F_{\widehat{u}}(\nullarg )=\operatorname{diag}(f_{\widehat{u}}(\nullarg))$
  and $\bM_{\widehat{u}}(\bR)$ denotes the set of
  $\#\widehat{u}\times \#\widehat{u}$ matrices with real entries. The
  integral of $F_{\widehat{u}}$ with respect to $L_{\widehat{u}}^{X}$
  (if it exists) can be considered to have a consistent system of
  characteristic triplets
  $(\Gamma_{\widehat{u}}^{I(F_{\widehat{u}}\ddagger X)},B_{\widehat{u}}^{I(F_{\widehat{u}}\ddagger X)},\nu_{\widehat{u}}^{I(F_{\widehat{u}})\ddagger X})$. Moreover,
  the collection $(F_{\widehat{u}})_{\widehat{u}\in \widehat{U}}$ can
  be lifted to an indexed linear operator $F$ from $\bR^{U}$ into
  itself. Therefore, the integral of $F$ with respect to $L^{X},$
  denoted by $I(F\ddagger X)$, is the \ID field with system of
  characteristic triplets given by
  $(\Gamma_{\widehat{u}}^{I(F_{\widehat{u}}\ddagger X)},B_{\widehat{u}}^{I(F_{\widehat{u}}\ddagger X)},\nu_{\widehat{u}}^{I(F_{\widehat{u}}\ddagger X)})$. However,
  it is not clear how to extend this procedure to more general linear
  operators, as the consistency of the system
  $(\Gamma_{\widehat{u}}^{I(F_{\widehat{u}}\ddagger X)},B_{\widehat{u}}^{I(F_{\widehat{u}}\ddagger X)},\nu_{\widehat{u}}^{I(F_{\widehat{u}}\ddagger X)})$
  may fail in the case of non-diagonal operators.
\end{remark}

\subsection{Volterra and OU type field-valued processes and
  selfdecomposability}

Following the steps of the previous subsection, in this part we define
Volterra type \ID field-valued processes, focusing on the OU case.

Let $F\colon\bR\times\bR\rightarrow \bR$ be a measurable function.
Suppose that for each $t\in \bR$, $F(t,\nullarg)$ is integrable with
respect to $L^{X}$ as in the previous subsection. Then the process
$Y(t) \coloneqq I[ F(t,\nullarg)\ddagger X ]$ is well defined as an
\ID field-valued process and we will refer to it as a \emph{Volterra
  type \ID field-valued process}.

A simple yet important example of the functions $F(t,\nullarg)$ is the
one that gives rise to the \emph{Ornstein-Uhlenbeck} \ID field-valued
process.  Namely
\begin{align*}
  F(t,s)=\indf_{(-\infty ,t]}(s)e^{-(t-s)}.
\end{align*}
It is well known that $F$ is $L_{\widehat{u}}^{X}$-integrable if and
only if $\int_{|x|>1}\log(|x|)\nu_{\widehat{u}}(dx)<\infty$ or
equivalently
$\int_{|\pi_{\widehat{u}}(x)|>1}\log (|\pi_{\widehat{u}}(x)|)\nu(dx)<\infty$. In
this setting, the process
\begin{equation}
  \label{eqn6.6}
  Y(t)=\int_{-\infty }^{t}e^{-(t-s)}dL^{X}(s),
\end{equation}
is well defined, provided that
$\int_{|\pi_{\widehat{u}}(x)|>1}\log (|\pi_{\widehat{u}}(x)|)\nu(dx)<\infty$
for any $\widehat{u}\in \widehat{U}$. The field-valued process $Y$ is
\ID and stationary and will be called \emph{field-valued
  Ornstein-Uhlenbeck process}. The following proposition generalizes
the classical result concerning the marginal distributions of OU
processes driven by a L\'{e}vy process.

\begin{proposition}
  Let $X=(X_{u})_{u\in U}$ be an infinitely divisible field with
  characteristic triplet $(\Gamma ,B,\nu)$ such that
  $\smash{\int_{\abs{\pi_{\widehat{u}}(x)} >1}\log(\abs{\pi_{\widehat{u}}(x)}) \nu(dx) <\infty }$
  for every $\widehat{u}\in \widehat{U}$. Take $Y$ to be as in
  \eqref{eqn6.6}. Then, for every $t\in \bR$, the field $Y(t)$ is \SD
  and $Y(t) \overset{d}{=} \int_{0}^{\infty }e^{-s}dL^{X}(s)$.
  Reciprocally, for a given \SD field $Y$, there exists a unique in
  law \ID field-valued L\'{e}vy process $L^{Y},$ such that
  $Y\overset{d}{=}\int_{0}^{\infty }e^{-s}dL^{Y}(s)$.
\end{proposition}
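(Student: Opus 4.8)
The plan is to reduce the whole statement to the finite-dimensional marginals, where the relevant fact is the classical correspondence of Wolfe (and its $\bR^{k}$-valued extension, see \cite{Wolfe82,Jur11}) between stationary Ornstein--Uhlenbeck distributions and selfdecomposable laws, and then to transport this back to the level of fields by the consistency machinery of Proposition~\ref{propsystemchtrip} and Theorem~\ref{thmmasterlevymeasure}. Write $f(s)\coloneqq e^{-s}\indf_{\{s\geq 0\}}$, so that $\int_{0}^{\infty}e^{-s}dL^{X}(s)=I(f\ddagger X)$ in the notation of the preceding subsection. First I would record that under the log-moment hypothesis $f$ is $L_{\widehat{u}}^{X}$-integrable for every $\widehat{u}\in\widehat{U}$: the Gaussian condition is immediate since $\int_{0}^{\infty}e^{-2s}\,ds<\infty$, and the remaining two conditions reduce, after splitting the inner integral at $\abs{x}=1$ and using that $\int_{0}^{\infty}(1\wedge\abs{e^{-s}x})\,ds$ and $\int_{0}^{\infty}(1\wedge\abs{e^{-s}x}^{2})\,ds$ both grow like $\log\abs{x}$ for large $\abs{x}$, to $\int_{\abs{\pi_{\widehat{u}}(x)}>1}\log\abs{\pi_{\widehat{u}}(x)}\,\nu(dx)<\infty$, which holds by assumption (and which is also exactly the condition making $Y$ in \eqref{eqn6.6} well defined). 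Hence $\int_{0}^{\infty}e^{-s}dL^{X}(s)$ is a well-defined \ID field.

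For the first assertion I would first establish $Y(t)\overset{d}{=}\int_{0}^{\infty}e^{-s}dL^{X}(s)$. Since the law of an \ID field is determined by its finite-dimensional marginals, it suffices to check this after applying each $\pi_{\widehat{u}}$, and there it is the change of variables $u=t-s$ in the cumulant: by \eqref{eqn6.1} and linearity of the integral, $\rC\{\theta\ddagger Y_{\widehat{u}}(t)\}=\int_{-\infty}^{t}\rC\{e^{-(t-s)}\theta\ddagger X_{\widehat{u}}\}\,ds=\int_{0}^{\infty}\rC\{e^{-u}\theta\ddagger X_{\widehat{u}}\}\,du=\rC\{\theta\ddagger\int_{0}^{\infty}e^{-u}dL_{\widehat{u}}^{X}(u)\}$ for all $t$. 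For selfdecomposability I would invoke Proposition~\ref{propdilatationsd}: by \eqref{eqn6.5} the master L\'evy measure of $I(f\ddagger X)$ is $\nu^{I(f\ddagger X)}(A)=\int_{0}^{\infty}\int_{\bR^{U}}\indf_{A}(e^{-s}x)\,\nu(dx)\,ds$, which does not charge zero because $\nu$ does not, and which equals $\int_{0}^{1}\int_{\bR^{U}}\indf_{A}(ux)\,\nu(dx)\,u^{-1}\,du$ after substituting $u=e^{-s}$. For $q>1$ the substitution $v=q^{-1}e^{-s}$ gives $\nu^{I(f\ddagger X)}(qA)=\int_{0}^{1/q}\int_{\bR^{U}}\indf_{A}(vx)\,\nu(dx)\,v^{-1}\,dv\leq\nu^{I(f\ddagger X)}(A)$, so the dilation criterion \eqref{eqn2} holds and $\int_{0}^{\infty}e^{-s}dL^{X}(s)$, hence $Y(t)$, is \SD.

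For the converse, let $Y=(Y_{u})_{u\in U}$ be a \SD field with master L\'evy measure $\nu_{Y}$. Then $Y_{\widehat{u}}$ is \SD in $\bR^{\widehat{u}}$ for every $\widehat{u}\in\widehat{U}$, so by the classical theorem there is a L\'evy process $M^{\widehat{u}}$ in $\bR^{\widehat{u}}$, unique in law, whose L\'evy measure has finite log-moment outside the unit ball and with $Y_{\widehat{u}}\overset{d}{=}\int_{0}^{\infty}e^{-s}dM^{\widehat{u}}(s)$. To glue these I would verify that $(\cL(M^{\widehat{u}}(1)))_{\widehat{u}\in\widehat{U}}$ is a consistent system: for $\widehat{v}\subset\widehat{u}$, $\pi_{\widehat{v}\widehat{u}}(M^{\widehat{u}})$ is again a L\'evy process in $\bR^{\widehat{v}}$ with the log-moment, and $\int_{0}^{\infty}e^{-s}d\bigl(\pi_{\widehat{v}\widehat{u}}M^{\widehat{u}}\bigr)(s)=\pi_{\widehat{v}\widehat{u}}\bigl(\int_{0}^{\infty}e^{-s}dM^{\widehat{u}}(s)\bigr)\overset{d}{=}\pi_{\widehat{v}\widehat{u}}(Y_{\widehat{u}})=Y_{\widehat{v}}$, so the uniqueness part of the classical theorem forces $\cL(M^{\widehat{v}}(1))=\cL(M^{\widehat{u}}(1))\circ\pi_{\widehat{v}\widehat{u}}^{-1}$. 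By Proposition~\ref{propsystemchtrip} and Theorem~\ref{thmmasterlevymeasure} this system lifts to a unique-in-law \ID field $Z$ with $Z_{\widehat{u}}\overset{d}{=}M^{\widehat{u}}(1)$; that the master L\'evy measure of $Z$ does not charge zero I would check by inverting, dimension by dimension, the relation $\nu_{Y}(\nullarg)=\int_{0}^{1}\int_{\bR^{U}}\indf_{\nullarg}(ux)\,\nu_{Z}(dx)\,u^{-1}\,du$ and using that $\nu_{Y}$ does not charge zero. Then $L^{Y}$, the \ID field-valued L\'evy process induced by $Z$, is the required process: its marginals satisfy $\int_{0}^{\infty}e^{-s}dL_{\widehat{u}}^{Y}(s)=\int_{0}^{\infty}e^{-s}dM^{\widehat{u}}(s)\overset{d}{=}Y_{\widehat{u}}$, hence $\int_{0}^{\infty}e^{-s}dL^{Y}(s)\overset{d}{=}Y$; and if $L'$ were another \ID field-valued L\'evy process with $\int_{0}^{\infty}e^{-s}dL'(s)\overset{d}{=}Y$, then $\int_{0}^{\infty}e^{-s}dL'_{\widehat{u}}(s)\overset{d}{=}Y_{\widehat{u}}$ for every $\widehat{u}$, so $L'_{\widehat{u}}\overset{d}{=}M^{\widehat{u}}$ by classical uniqueness and therefore $L'\overset{d}{=}L^{Y}$ as field-valued processes.

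I expect the genuine difficulty to lie in the gluing step of the converse: one must confirm that the classical one-to-one OU correspondence commutes with the projections $\pi_{\widehat{v}\widehat{u}}$ (which is exactly the consistency check above and hinges on the \emph{uniqueness} in the classical theorem) and, relatedly, that the lifted background field $Z$ has a master L\'evy measure that does not charge zero, so that $L^{Y}$ really belongs to the class of \ID field-valued L\'evy processes constructed earlier. The forward direction, by contrast, is essentially the explicit identification of $\nu^{I(f\ddagger X)}$ together with Proposition~\ref{propdilatationsd}.
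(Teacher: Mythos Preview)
Your argument is correct, but it differs from the paper's in both halves.

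For the forward direction, the paper simply observes that each finite-dimensional marginal satisfies $Y_{\widehat{u}}(t)\overset{d}{=}\int_{0}^{\infty}e^{-s}\,dL_{\widehat{u}}^{X}(s)$, which is selfdecomposable in $\bR^{\widehat{u}}$ by the classical result; since selfdecomposability of a field is equivalent to selfdecomposability of all its finite-dimensional marginals, this finishes the argument in one line. You instead compute the master L\'evy measure $\nu^{I(f\ddagger X)}$ explicitly and verify the dilation criterion of Proposition~\ref{propdilatationsd} directly. Your route is more self-contained and exercises the master L\'evy measure machinery the paper has developed, but it is noticeably more work than the paper's appeal to the finite-dimensional characterisation.

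For the converse, the paper obtains consistency of the system $\cL(L_{\widehat{u}}^{Y}(1))$ via the Langevin equation $L_{\widehat{u}}^{Y}(1)=\widetilde{Y}_{\widehat{u}}(1)-\widetilde{Y}_{\widehat{u}}(0)+\int_{0}^{1}\widetilde{Y}_{\widehat{u}}(s)\,ds$, which exhibits the background driver at time $1$ as a projection-equivariant functional of the stationary OU process $\widetilde{Y}_{\widehat{u}}$; consistency of the latter then transfers automatically. You instead project $M^{\widehat{u}}$ down to $\bR^{\widehat{v}}$ and invoke the uniqueness part of the classical Wolfe correspondence to force $\cL(M^{\widehat{v}}(1))=\cL(M^{\widehat{u}}(1))\circ\pi_{\widehat{v}\widehat{u}}^{-1}$. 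Both arguments work; the Langevin route is a pleasant structural observation (the noise is recoverable from the path), while your projection argument is more direct and avoids building the auxiliary processes $\widetilde{Y}_{\widehat{u}}$. Your explicit attention to whether the lifted master L\'evy measure of $Z$ charges zero is a point the paper's proof passes over silently.
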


\begin{proof}
  Let $\widehat{u}\in \widehat{U}$. Then
  $Y_{\widehat{u}}(t)\overset{d}{=}\int_{-\infty }^{t}e^{-(t-s)}dL_{\widehat{u}}^{X}(s)\overset{d}{=}\int_{0}^{\infty }e^{-s}dL_{\widehat{u}}^{X}(s)$. It
  is well known that the law of
  $\int_{0}^{\infty }e^{-s}dL_{\widehat{u}}^{X}(s)$ belongs to
  $\SD(\bR^{\widehat{u}})$. Consequently, the \ID field $Y(t)$ is
  selfdecomposable. Reciprocally, let $Y$ be a selfdecomposable field,
  then $\cL(Y_{\widehat{u}})\in \SD(\bR^{\widehat{u}})$, thus there
  exists a unique (in law) L\'{e}vy process $L_{\widehat{u}}^{Y}$ such
  that
  $Y_{\widehat{u}}\overset{d}{=}\int_{0}^{\infty }e^{-s}dL_{\widehat{u}}^{Y}(s)$. Put
  $\widetilde{Y}_{\widehat{u}}(t)\coloneqq \int_{-\infty }^{t}e^{-(t-s)}dL_{\widehat{u}}^{Y}(s)$,
  $t\in \bR$, then $\widetilde{Y}_{\widehat{u}}$ is stationary and its
  marginal distributions are equal to $\cL(Y_{\widehat{u}})$. By the
  Langevin equation
  \begin{equation*}
    L_{\widehat{u}}^{Y}(1)=\widetilde{Y}_{\widehat{u}}(1)-\widetilde{Y}_{\widehat{u}}(1)+\int_{0}^{1}\widetilde{Y}_{\widehat{u}}(s)ds,\qquad \widehat{u}\in \widehat{U},
  \end{equation*}
  i.e.\ $L_{\widehat{u}}^{Y}(1)$ is a functional of
  $\widetilde{Y}_{\widehat{u}}.$ Due to the consistency of the
  characteristic triplets of $\widetilde{Y}_{\widehat{u}}$ we have
  that the triplets of $L_{\widehat{u}}^{Y}(1)$ are consistent as
  well, thus there exists a unique (in law) \ID field-valued L\'{e}vy
  process $L^{Y},$ whose finite-dimensional distributions correspond
  to those of $L_{\widehat{u}}^{Y}$. This concludes the proof.
\end{proof}

\section{Conclusion}
\label{sec:conclusions}
The purpose of this paper has been to study selfdecomposability of
random fields, as defined directly rather than in terms of
finite-dimensional distributions. Applications of the results to
modelling within the framework of Ambit Stochastics will be discussed
elsewhere. The exposition we present is based on the concept of master
L\'evy measures of which we give a thorough discussion, building on
the recent work of \cite{Ros07a,Ros07b,Ros08,Ros13}.
\appendix

\section{Appendix}

In this appendix we present a proof of
Theorem~\ref{thmmasterlevymeasure}.  We want to emphasize that we
construct such a proof based on the remarks given by Jan Rosi\'{n}ski
in~\cite{Ros07a,Ros08,Ros13}.

Let us start with the next lemma.

\begin{lemma}
  Let $(\cX,\cB,\mu)$ be a measure space and $\cL^{1}(\cX,\cB,\mu)$
  the Lebesgue space of real-valued integrable functions. We have that
  $\mu$ is $\sigma $-finite if and only if there is
  $f\in \cL^{1}(\cX,\cB,\mu)$ such that $f$ is strictly positive.
\end{lemma}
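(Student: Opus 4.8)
The plan is to establish both implications by explicit construction, since the statement is the standard characterisation of $\sigma$-finiteness via a strictly positive integrable density. There is no deep idea involved; the work is in choosing the right function in one direction and the right exhausting sequence in the other.

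First I would treat the direction ``$\mu$ $\sigma$-finite $\Rightarrow$ existence of $f$''. Writing $\cX=\bigcup_{n\geq 1}A_{n}$ with the sets $A_{n}\in\cB$ pairwise disjoint and $\mu(A_{n})<\infty$, I would set
\[
  f\coloneqq\sum_{n\geq 1}\frac{2^{-n}}{1+\mu(A_{n})}\,\indf_{A_{n}}.
\]
This $f$ is $\cB$-measurable and strictly positive on all of $\cX$, and $\int_{\cX}f\,d\mu=\sum_{n\geq 1}2^{-n}\mu(A_{n})(1+\mu(A_{n}))^{-1}\leq\sum_{n\geq 1}2^{-n}=1$, so $f\in\cL^{1}(\cX,\cB,\mu)$. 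Any summable choice of positive coefficients that tames the values $\mu(A_{n})$ works equally well.

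Next I would treat the converse. Given $f\in\cL^{1}(\cX,\cB,\mu)$ strictly positive, I would put $A_{n}\coloneqq\{x\in\cX:f(x)>1/n\}$ for $n\geq 1$. Strict positivity of $f$ gives $\bigcup_{n\geq 1}A_{n}=\cX$, while Markov's inequality gives $\mu(A_{n})\leq n\int_{\cX}f\,d\mu<\infty$; hence $\cX$ is exhausted by countably many sets of finite measure, i.e.\ $\mu$ is $\sigma$-finite.

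I do not expect a substantial obstacle; the only point that needs attention is the precise meaning of ``strictly positive''. If it is only required $\mu$-a.e., then the exceptional set $N=\{x:f(x)\leq 0\}$ is $\mu$-null, and in the converse direction one simply replaces $A_{1}$ by $A_{1}\cup N$ (still of finite measure, since $\mu(N)=0$) to recover a cover of $\cX$; in the forward direction, allowing $f$ to vanish on a null set affects neither measurability nor integrability. I would state and use the lemma with the ``everywhere'' reading, which is precisely what the construction in the first step delivers.
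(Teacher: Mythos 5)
Your proof is correct: the paper itself omits this proof as straightforward, and your argument (building $f=\sum_{n\geq 1}2^{-n}(1+\mu(A_{n}))^{-1}\indf_{A_{n}}$ from a disjointified $\sigma$-finite decomposition in one direction, and using the level sets $\{f>1/n\}$ together with Markov's inequality in the other) is exactly the standard argument the authors had in mind. Your closing remark on the everywhere versus $\mu$-a.e.\ reading of ``strictly positive'' is a sensible clarification and does not affect the result.
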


\begin{proof}
  The proof is straightforward, thus omitted.
\end{proof}

\begin{lemma}
  \label{proptype1}Let $\nu$ be a measure on $\cB(\bR)^{U}$ satisfying
  \eqref{eqn1.3.7}. Then $\nu $ does not charge zero if and only if
  $\nu$ is $\sigma$-finite and for all $A\in \cB(\bR)^{U}$ there
  exists $U_{A}\subset U$ countable, such that
  \begin{equation}
    \nu(A)=\nu(A\setminus \pi_{U_{A}}^{-1}(0^{U_{A}})).
    \label{eqn1.3.8}
  \end{equation}
\end{lemma}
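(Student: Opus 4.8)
The plan is to establish the two implications separately by elementary measure theory, the only external input being condition \eqref{eqn1.3.7}.

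Assume first that $\nu$ does not charge zero, so there is a countable $U_{0}\subset U$ with $\nu(\pi_{U_{0}}^{-1}(0^{U_{0}}))=0$. For $\sigma$-finiteness I would decompose
\begin{equation*}
  \bR^{U}
  = \pi_{U_{0}}^{-1}(0^{U_{0}})
  \;\cup\; \bigcup_{u\in U_{0}}\bigcup_{n\geq 1}\{x\in \bR^{U}: |\pi_{u}(x)|>1/n\},
\end{equation*}
which is a countable union since $U_{0}$ is countable, and a union of sets in $\cB(\bR)^{U}$. The first set is $\nu$-null by hypothesis, while on $\{|\pi_{u}(x)|>1/n\}$ one has $1\wedge|\pi_{u}(x)|^{2}\geq n^{-2}$, so \eqref{eqn1.3.7} gives $\nu(\{|\pi_{u}(x)|>1/n\})\leq n^{2}\int_{\bR^{U}}1\wedge|\pi_{u}(x)|^{2}\,\nu(dx)<\infty$; hence $\nu$ is $\sigma$-finite. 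For \eqref{eqn1.3.8} it is then enough to take $U_{A}\coloneqq U_{0}$ for every $A\in\cB(\bR)^{U}$: since $A\cap\pi_{U_{0}}^{-1}(0^{U_{0}})$ is contained in the $\nu$-null set $\pi_{U_{0}}^{-1}(0^{U_{0}})$, additivity of $\nu$ on the disjoint decomposition $A=(A\setminus\pi_{U_{0}}^{-1}(0^{U_{0}}))\cup(A\cap\pi_{U_{0}}^{-1}(0^{U_{0}}))$ yields $\nu(A)=\nu(A\setminus\pi_{U_{0}}^{-1}(0^{U_{0}}))$, which remains valid even when $\nu(A)=\infty$.

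For the converse, suppose $\nu$ is $\sigma$-finite and that \eqref{eqn1.3.8} holds for every $A$. Choose an increasing sequence $A_{n}\uparrow\bR^{U}$ with $\nu(A_{n})<\infty$ for all $n$, and apply \eqref{eqn1.3.8} to each $A_{n}$ to obtain a countable $U_{n}\subset U$ with $\nu(A_{n})=\nu(A_{n}\setminus\pi_{U_{n}}^{-1}(0^{U_{n}}))$; since $\nu(A_{n})<\infty$, subtracting shows $\nu(A_{n}\cap\pi_{U_{n}}^{-1}(0^{U_{n}}))=0$. Put $U_{0}\coloneqq\bigcup_{n}U_{n}$, which is countable. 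Because $U_{n}\subset U_{0}$, a point with all $U_{0}$-coordinates equal to zero has, in particular, all $U_{n}$-coordinates equal to zero, so $\pi_{U_{0}}^{-1}(0^{U_{0}})\subset\pi_{U_{n}}^{-1}(0^{U_{n}})$; consequently $A_{n}\cap\pi_{U_{0}}^{-1}(0^{U_{0}})$ is $\nu$-null for each $n$. Letting $n\to\infty$ and using continuity of $\nu$ from below gives $\nu(\pi_{U_{0}}^{-1}(0^{U_{0}}))=0$, i.e.\ $\nu$ does not charge zero.

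The argument is essentially bookkeeping; the two points that need a little care are the direction of the inclusion $\pi_{U_{0}}^{-1}(0^{U_{0}})\subset\pi_{U_{n}}^{-1}(0^{U_{n}})$ — enlarging the index set can only shrink the set on which all coordinates vanish — and the step passing from $\nu(A_{n})=\nu(A_{n}\setminus\pi_{U_{n}}^{-1}(0^{U_{n}}))$ to $\nu(A_{n}\cap\pi_{U_{n}}^{-1}(0^{U_{n}}))=0$, which genuinely uses $\nu(A_{n})<\infty$ and is precisely where $\sigma$-finiteness enters in the converse.
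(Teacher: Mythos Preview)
Your proof is correct and follows the same overall strategy as the paper's: in the forward direction you take $U_{A}=U_{0}$ for every $A$, and in the converse you use $\sigma$-finiteness to subtract and conclude $\nu(\pi_{U_{0}}^{-1}(0^{U_{0}}))=0$.

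The execution differs in two minor but pleasant ways. For $\sigma$-finiteness, the paper invokes a preliminary lemma (existence of a strictly positive integrable function implies $\sigma$-finiteness) applied on $A_{0}=\bR^{U}\setminus\pi_{U_{0}}^{-1}(0^{U_{0}})$; your explicit countable cover $\pi_{U_{0}}^{-1}(0^{U_{0}})\cup\bigcup_{u\in U_{0}}\bigcup_{n}\{|\pi_{u}(x)|>1/n\}$ bypasses that lemma and is arguably cleaner. For the converse, the paper writes ``without loss of generality $\nu$ is finite'' and then applies \eqref{eqn1.3.8} once with $A=\bR^{U}$; your version makes the $\sigma$-finite exhaustion $A_{n}\uparrow\bR^{U}$ explicit, collects the $U_{n}$ into $U_{0}=\bigcup_{n}U_{n}$, and passes to the limit. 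This is the same idea unpacked, and your remark that the subtraction step genuinely needs $\nu(A_{n})<\infty$ is exactly the point.
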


\begin{proof}
  Assume that Equations~\eqref{eqn1.3.6} and \eqref{eqn1.3.7}
  hold. Then, for any $A\in \cB(\bR)^{U}$,
  $\nu(A\cap \pi_{U_{0}}^{-1}(0^{U_{0}}))=0$, thus
  \begin{equation*}
    \nu(A)=\nu(A\cap \pi_{U_{0}}^{-1}(0^{U_{0}}))+\nu(A\setminus \pi
    _{U_{0}}^{-1}(0^{U_{0}}))=\nu(A\setminus \pi_{U_{0}}^{-1}(0^{U_{0}})),
  \end{equation*}
  proving thus \eqref{eqn1.3.8}. On the other hand, due to
  \eqref{eqn1.3.7}, for any $u\in U_{0}$
  \begin{equation*}
    \int_{A_{0}}1\wedge |\pi_{u}(x)|^{2}\nu(dx)<\infty ,
  \end{equation*}
  with
  $A_{0}\coloneqq \bR^{U}\setminus \pi_{U_{0}}^{-1}(0^{U_{0}})$. Using
  that $1\wedge |\pi_{u}(\nullarg )|^{2}$ is strictly positive on
  $A_{0}$ and the previous lemma, we have that $\nu$ restricted to
  $A_{0}$ is $\sigma$-finite, i.e.\ there exists
  $\{S_{n}^{\prime }\}_{n\geq 1}$, such that
  $S_{n}^{\prime }\uparrow \bR^{U}$ and
  $\nu(S_{n}^{\prime }\cap A_{0})<\infty$ for all $n\in \bN$. Putting
  $S_{n}=(S_{n}^{\prime }\cap A_{0})\cup \pi_{U_{0}}^{-1}(0^{U_{0}})$,
  we see that $S_{n}\uparrow \bR^{U}$ and thanks to \eqref{eqn1.3.6}
  \begin{equation*}
    \nu(S_{n})\leq \nu(S_{n}^{\prime }\cap A_{0})<\infty ,
  \end{equation*}
  i.e.\ $\nu$ is $\sigma$-finite. Conversely, assume that $\nu$ is
  $\sigma$-finite and \eqref{eqn1.3.8} holds, then without loss of
  generality we may and do assume that $\nu$ is finite. Thanks to
  \eqref{eqn1.3.8}, we have that there exist $U_{0}\subset U$
  countable, such that
  \begin{equation*}
    \nu(\bR^{U}\setminus \pi_{U_{0}}^{-1}(0^{U_{0}}))=\nu(\bR^{U}),
  \end{equation*}
  which implies \eqref{eqn1}.
\end{proof}

\begin{lemma}
  \label{lemma2}
  The collection
  $\cB_{0}\coloneqq \bigcup_{\widehat{u}\in \widehat{U}}\pi_{\widehat{u}}^{-1}[ \cB(\bR^{ \widehat{u}}\setminus 0^{\widehat{u}}) ]$
  is a ring for which $\cB(\bR)^{U}=\sigma( \cB_{0})$. Let $\nu$ and
  $\widetilde{\nu }$ be two $\sigma $-finite measures defined on
  $\cB(\bR)^{U}$ which coincide in $\cB_{0}$. If $\nu$ and
  $\widetilde{\nu }$ do not charge zero, then
  $\nu \equiv \widetilde{\nu }$.
\end{lemma}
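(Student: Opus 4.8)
The plan is to handle the three assertions in turn. For the first, that $\cB_0$ is a ring, the key remark is that any two of its members can be re-indexed over a common finite set: if $\widehat u\subseteq\widehat w$ and $A\in\cB(\bR^{\widehat u}\setminus 0^{\widehat u})$ then $\pi_{\widehat u}^{-1}(A)=\pi_{\widehat w}^{-1}(\pi_{\widehat w\widehat u}^{-1}(A))$, where $\pi_{\widehat w\widehat u}^{-1}(A)$ is a Borel subset of $\bR^{\widehat w}$ avoiding $0^{\widehat w}$ since $\pi_{\widehat w\widehat u}(0^{\widehat w})=0^{\widehat u}\notin A$. Thus, given $\pi_{\widehat u_1}^{-1}(A_1),\pi_{\widehat u_2}^{-1}(A_2)\in\cB_0$, I take $\widehat w=\widehat u_1\cup\widehat u_2$, rewrite them as $\pi_{\widehat w}^{-1}(A_1'),\pi_{\widehat w}^{-1}(A_2')$ with $0^{\widehat w}\notin A_i'$, and observe that their union and set difference equal $\pi_{\widehat w}^{-1}(A_1'\cup A_2')$ and $\pi_{\widehat w}^{-1}(A_1'\setminus A_2')$, again in $\cB_0$ because neither $A_1'\cup A_2'$ nor $A_1'\setminus A_2'$ contains $0^{\widehat w}$; together with $\emptyset=\pi_{\widehat u}^{-1}(\emptyset)\in\cB_0$ this makes $\cB_0$ a ring. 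Note that $\cB_0$ is \emph{not} an algebra, as $\bR^U\notin\cB_0$; this is precisely why the lemma is needed and a direct appeal to a $\pi$-system uniqueness theorem on $\bR^U$ is not available.

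For the second assertion, $\sigma(\cB_0)=\cB(\bR)^U$, one inclusion is trivial since each member of $\cB_0$ is a cylinder. For the reverse, $\cB(\bR)^U$ is generated by the cylinders $\pi_{\widehat u}^{-1}(A)$ with $A\in\cB(\bR^{\widehat u})$, so it suffices to put each such set in $\sigma(\cB_0)$: writing $A=(A\setminus 0^{\widehat u})\cup(A\cap\{0^{\widehat u}\})$, the first part gives $\pi_{\widehat u}^{-1}(A\setminus 0^{\widehat u})\in\cB_0$, while $\pi_{\widehat u}^{-1}(\{0^{\widehat u}\})=\bR^U\setminus\pi_{\widehat u}^{-1}(\bR^{\widehat u}\setminus 0^{\widehat u})\in\sigma(\cB_0)$ because $\bR^U\in\sigma(\cB_0)$ and $\pi_{\widehat u}^{-1}(\bR^{\widehat u}\setminus 0^{\widehat u})\in\cB_0$.

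For the uniqueness statement, the first and crucial step is to discard the singular part, using that $\nu$ and $\widetilde\nu$ do not charge zero. I choose countable $U_0,\widetilde U_0\subseteq U$ with $\nu(\pi_{U_0}^{-1}(0^{U_0}))=\widetilde\nu(\pi_{\widetilde U_0}^{-1}(0^{\widetilde U_0}))=0$, set $U_1=U_0\cup\widetilde U_0$ and $Z=\pi_{U_1}^{-1}(0^{U_1})$; since $Z$ is contained in both $\pi_{U_0}^{-1}(0^{U_0})$ and $\pi_{\widetilde U_0}^{-1}(0^{\widetilde U_0})$, we get $\nu(Z)=\widetilde\nu(Z)=0$, hence $\nu(A)=\nu(A\cap Y)$ and $\widetilde\nu(A)=\widetilde\nu(A\cap Y)$ for all $A\in\cB(\bR)^U$, where $Y\coloneqq\bR^U\setminus Z=\bigcup_{u\in U_1}\pi_u^{-1}(\bR\setminus\{0\})$. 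It therefore suffices to prove $\nu=\widetilde\nu$ on the trace $\sigma$-algebra $\mathcal A_Y=\{A\in\cB(\bR)^U:A\subseteq Y\}$, which by the second assertion equals $\sigma_Y(\cB_0|_Y)$ with $\cB_0|_Y=\{B\cap Y:B\in\cB_0\}$ a ring on $Y$; and on this ring $\nu$ and $\widetilde\nu$ agree, because $\nu(B\cap Y)=\nu(B)=\widetilde\nu(B)=\widetilde\nu(B\cap Y)$ for $B\in\cB_0$. To conclude I need a finite-measure exhaustion of $Y$ inside $\cB_0|_Y$: enumerating $U_1=\{u_1,u_2,\dots\}$ and invoking \eqref{eqn1.3.7}, the sets $R_n\coloneqq\bigcup_{i=1}^n\{x\in\bR^U:|\pi_{u_i}(x)|>1/n\}$ lie in $\cB_0$, satisfy $R_n\subseteq Y$ and $R_n\uparrow Y$, and have $\nu(R_n),\widetilde\nu(R_n)<\infty$ since $\nu(\{x:|\pi_u(x)|>1/n\})\le n^2\int_{\bR^U}1\wedge|\pi_u(x)|^2\,\nu(dx)<\infty$. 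For each fixed $n$, the family $\{A\in\mathcal A_Y:\nu(A\cap R_n)=\widetilde\nu(A\cap R_n)\}$ is a $\lambda$-system containing the $\pi$-system $\cB_0|_Y$, hence is all of $\mathcal A_Y$ by Dynkin's theorem; letting $n\to\infty$ and using $R_n\uparrow Y$ gives $\nu=\widetilde\nu$ on $\mathcal A_Y$, and so on $\cB(\bR)^U$.

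I expect the main obstacle to be exactly this reduction to $Y$: because $\cB_0$ is only a ring and $\bR^U$ is not a countable union of its members (they cover only $\bR^U\setminus\{0^U\}$), no measure-uniqueness theorem applies on $\bR^U$ directly, and in fact $\sigma$-finiteness alone does not suffice; the hypothesis that $\nu$ and $\widetilde\nu$ do not charge zero is precisely what allows the passage to the subspace $Y$, on which $\cB_0|_Y$ admits the finite-measure exhaustion $(R_n)$ needed to run the standard Dynkin argument. The remaining steps are routine bookkeeping with projections.
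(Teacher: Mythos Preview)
Your argument is correct and structurally close to the paper's. The ring property and the equality $\sigma(\cB_0)=\cB(\bR)^U$ you handle exactly as the paper does (the paper merely declares the former ``obvious'' and argues the latter by the same case split on whether $0^{\widehat u}\in A$). For uniqueness, both proofs hinge on the same idea: the ``does not charge zero'' hypothesis produces a common countable $U_1$ with $\nu(Z)=\widetilde\nu(Z)=0$ for $Z=\pi_{U_1}^{-1}(0^{U_1})$, and the complement $Y=\bR^U\setminus Z$ is a countable union of $\cB_0$-sets. The organization differs: the paper first applies the Monotone Class Theorem to get agreement on the $\sigma$-ring $\cS\cR(\cB_0)$, shows $Y\in\cS\cR(\cB_0)$, deduces $\nu(\bR^U)=\widetilde\nu(\bR^U)$, and then runs a second Monotone Class argument on $\cB_0\cup\{\bR^U\}$; you instead pass directly to the trace $\sigma$-algebra on $Y$ and run a single Dynkin argument with the explicit exhaustion $R_n\uparrow Y$. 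Your route is arguably cleaner, since it avoids the two-step MCT.

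One caveat: to obtain $\nu(R_n),\widetilde\nu(R_n)<\infty$ you invoke \eqref{eqn1.3.7}, which is not among the lemma's stated hypotheses---only $\sigma$-finiteness is assumed. This is harmless for every use of the lemma in the paper (the measures in question are master L\'evy measures and do satisfy \eqref{eqn1.3.7}), and the paper's own reduction ``by the $\sigma$-finiteness we may and do assume $\nu,\widetilde\nu$ are finite'' is no better justified from bare $\sigma$-finiteness without a finite-measure exhaustion inside $\cB_0$; but strictly as a proof of the lemma \emph{as stated} you should either record \eqref{eqn1.3.7} as an added hypothesis or supply a separate argument for a finite-measure exhaustion of $Y$ within $\cS\cR(\cB_0)$.
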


\begin{proof}
  Obviously $\cB_{0}$ is a ring and
  $\cB_{0}\subset \cB(\bR)^{U}$. Therefore,
  $\cB(\bR)^{U}=\sigma(\cB_{0})$ if
  $\pi_{_{\widehat{u}}}^{-1}(A)\in \sigma(\cB_{0})$ for any
  $A\in \cB(\bR^{\widehat{u}})$ and $\widehat{u}\in \widehat{U}$. Fix
  $\widehat{u}\in \widehat{U}$ and take an arbitrary
  $A\in \cB(\bR^{\widehat{u}})$. Now, if $0^{\widehat{u}}\notin A,$
  $\pi_{_{\widehat{u}}}^{-1}(A)\in \pi_{\widehat{u}}^{-1}[\cB(\bR^{\widehat{u}}\setminus 0^{\widehat{u}})]$,
  i.e.\ $\pi_{_{\widehat{u}}}^{-1}(A)\in \sigma (\cB_{0})$. In
  counterpart, if $0^{\widehat{u}}\in A$,
  $0^{\widehat{u}}\notin A^{c}$, thus as before
  $\pi_{_{\widehat{u}}}^{-1}(A^{c})\in \sigma(\cB_{0})$, which implies
  necessary that
  $\pi_{_{\widehat{u}}}^{-1}(A)=[\pi_{_{\widehat{u}}}^{-1}(A^{c})]^{c}\in \sigma(\cB_{0})$.

  On the other hand, let $\nu$ and $\widetilde{\nu }$ be two
  $\sigma $-finite measures coinciding on $\cB_{0}$. By the
  $\sigma $-finiteness, we may and do assume that $\nu$ and
  $\widetilde{\nu }$ are finite measures.  Invoking the Monotone Class
  Theorem, we deduce that $\nu$ and $\widetilde{ \nu }$ coincide on
  $\cS\cR(\cB_{0})$ the $\sigma$-ring generated by $\cB_{0}$. Suppose
  now that $\nu$ and $\widetilde{\nu }$ do not charge zero. Then,
  there are $U_{0}^{1},U_{0}^{2}\subset U$ countable such that
  $\nu( \bR^{U}\setminus \pi_{U_{0}^{1}}^{-1}(0^{U_{0}^{1}})) = \widetilde{\nu }(\bR^{U}\setminus \pi_{U_{0}^{2}}^{-1}( 0^{U_{0}^{2}})) =0$. Putting
  $U_{0}=U_{0}^{1}\cup U_{0}^{2}$, we get
  $\nu(\pi_{U_{0}}^{-1}(0^{U_{0}})) = \widetilde{\nu }( \pi_{U_{0}}^{-1}(0^{U_{0}})) =0$.
  This means that in order to complete the proof we only need to check
  that
  $\nu( \bR^{U}\setminus \pi_{U_{0}}^{-1}(0^{U_{0}})) =\widetilde{\nu }(\bR^{U}\setminus \pi_{U_{0}}^{-1}( 0^{U_{0}}))$,
  because in this case $\nu$ and $\widetilde{\nu }$ will coincide on
  $\cB_{0}\cup \{ \bR^{U}\}$, which implies, by the first part of the
  lemma and once you apply the Monotone Class Theorem, that $\nu$ and
  $\widetilde{\nu }$ coincide on
  $\cS\cR(\cB_{0}\cup \{ \bR^{U}\} ) =\sigma(\cB_{0}) =\cB(\bR)^{U}$.

  Let us verify that
  $\nu(\bR^{U}\setminus \pi_{U_{0}}^{-1}( 0^{U_{0}})) =\widetilde{\nu }(\bR^{U}\setminus \pi_{U_{0}}^{-1}(0^{U_{0}}))$. To
  do this, we show that
  $\bR^{U}\setminus \pi_{U_{0}}^{-1}(0^{U_{0}}) \in \cS\cR(\cB_{0})$.
  Assume that $U_{0}$ is finite. In view of
  $\bR^{U}\setminus \pi_{U_{0}}^{-1}( 0^{U_{0}}) =\pi_{U_{0}}^{-1}(\bR^{U_{0}}\setminus 0^{U_{0}}) $
  we see that
  $\bR^{U}\setminus \pi_{U_{0}}^{-1}(0^{U_{0}}) \in \cB_{0}\subset \cS\cR(\cB_{0}) $. Suppose
  now that $U_{0}$ has infinitely many elements, lets say
  $(u_{n})_{n\in \bN }\subset U$. Define
  $U_{0}^{n}\coloneqq( u_{i})_{i=1}^{n},$ then
  $U_{0}^{n}\in \widehat{U}$, $U_{0}^{n}\uparrow U_{0}$ and
  $\pi_{U_{0}^{n}}^{-1}(0^{U_{0}^{n}}) \downarrow \pi_{U_{0}}^{-1}(0^{U_{0}})$. Consequently
  $\pi_{U_{0}^{n}}^{-1}(\bR^{U_{0}^{n}}\setminus 0^{U_{0}^{n}}) \in \cB_{0}\subset \cS\cR( \cB_{0})$
  for all $n\in \bN$ and
  \begin{equation*}
    \bR^{U}\setminus \pi_{U_{0}}^{-1}(0^{U_{0}})
    =\bigcup\limits_{n\geq 1}\pi_{U_{0}^{n}}^{-1}(\bR
    ^{U_{0}^{n}}\setminus 0^{U_{0}^{n}}) \in \cS\cR(\cB_{0}) ,
  \end{equation*}
  which completes the proof.
\end{proof}

\begin{lemma}
  \label{lem:1}
  Suppose that for all $n\in \bN$ we have a non-empty compact set
  $C_{n}\subseteq \bR^{n}$ such that
  $(x_{1},\ldots ,x_{n+1})\in C_{n+1}$ implies that
  $(x_{1},\ldots ,x_{n})\in C_{n}$. Then there is
  $y_{\infty }=(y_{1},y_{2},\ldots )\in \bR^{\bN}$ such that
  $(y_{1},\ldots ,y_{n})\in C_{n}$ for all $n\in \bN$.
\end{lemma}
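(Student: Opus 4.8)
The plan is to prove this by a standard compactness-and-diagonalization argument: the statement is exactly the assertion that the inverse limit of the non-empty compact sets $C_{n}$ along the coordinate-forgetting projections $\bR^{n+1}\to\bR^{n}$ is non-empty. The only feature of compact subsets of $\bR^{n}$ that I will use is that they are closed and bounded, so that each coordinate of a point of $C_{n}$ ranges over a bounded subset of $\bR$ and the Bolzano--Weierstrass theorem can be applied coordinate by coordinate; closedness of the $C_{m}$ will then be used to pass to the limit.

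Concretely, first I would choose, for each $n\in\bN$, a point $x^{(n)}=(x^{(n)}_{1},\ldots ,x^{(n)}_{n})\in C_{n}$, which is possible since $C_{n}\neq\emptyset$; iterating the hypothesis gives $(x^{(n)}_{1},\ldots ,x^{(n)}_{k})\in C_{k}$ for every $k\le n$. Next comes the diagonal extraction. The scalars $x^{(n)}_{1}$, $n\ge1$, all lie in the bounded set $C_{1}\subseteq\bR$, so there are an infinite set $N_{1}\subseteq\bN$ and a point $y_{1}\in\bR$ with $x^{(n)}_{1}\to y_{1}$ along $n\in N_{1}$; inductively, given an infinite $N_{k-1}$, the scalars $x^{(n)}_{k}$ for $n\in N_{k-1}$ with $n\ge k$ lie in the projection of the bounded set $C_{k}$ onto the $k$-th coordinate, hence there are an infinite $N_{k}\subseteq N_{k-1}$ and $y_{k}\in\bR$ with $x^{(n)}_{k}\to y_{k}$ along $n\in N_{k}$. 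Letting $n_{k}$ be the $k$-th element of $N_{k}$, the sequence $(n_{k})_{k}$ is strictly increasing and $n_{k}\in N_{j}$ whenever $k\ge j$, so $x^{(n_{k})}_{j}\to y_{j}$ as $k\to\infty$ for each fixed $j$. Set $y_{\infty}:=(y_{1},y_{2},\ldots )$. To finish, fix $m$; since $n_{k}\to\infty$ we have $n_{k}\ge m$ for all large $k$, and for such $k$ the truncation $(x^{(n_{k})}_{1},\ldots ,x^{(n_{k})}_{m})$ lies in $C_{m}$ by the first step; letting $k\to\infty$ this truncation converges in $\bR^{m}$ to $(y_{1},\ldots ,y_{m})$, which therefore lies in $C_{m}$ because $C_{m}$ is closed.

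There is no real obstacle here; the only point demanding care is the bookkeeping of the nested subsequences $N_{1}\supseteq N_{2}\supseteq\cdots$ and the observation that the diagonal indices $n_{k}$ tend to infinity, which is what guarantees that for every $m$ the $m$-truncations $(x^{(n_{k})}_{1},\ldots ,x^{(n_{k})}_{m})$ are eventually defined and remain in $C_{m}$. (Alternatively, the same result can be phrased via Tychonoff's theorem: on the compact space $\prod_{n}C_{n}$ the sets $\{(c_{m})_{m}:\ c_{n}=\text{first }n\text{ coordinates of }c_{n+1}\}$ are closed and, by the consistency hypothesis, have the finite intersection property, so their intersection is non-empty and any element of it is a coherent sequence $(c_{n})_{n}$ with $c_{n}\in C_{n}$, whose ``limit'' gives the desired $y_{\infty}$.)
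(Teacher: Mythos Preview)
Your proof is correct, but it follows a different route from the paper. The paper constructs $y_{\infty}$ one coordinate at a time: for each $n$, the projections $\pi_{m1}(C_{m})$ ($m\ge 1$) form a decreasing sequence of non-empty compact subsets of $\bR$, so their intersection is non-empty and one picks $y_{1}$ there; then one replaces each $C_{n}$ by its slice over $y_{1}$ and repeats, obtaining $y_{2}$, and so on by induction. In other words, the paper uses the finite-intersection property of nested compact sets directly at each step, guaranteeing in advance that every partial choice $(y_{1},\ldots ,y_{k})$ extends.

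Your argument instead selects a point $x^{(n)}\in C_{n}$ for every $n$, exploits the consistency hypothesis to view all truncations as elements of the lower $C_{k}$, and then runs a Bolzano--Weierstrass diagonal extraction to produce a coordinatewise limit, finally invoking closedness of each $C_{m}$. This is equally valid and arguably more elementary, since it only uses sequential compactness in $\bR$ and avoids tracking the slice sets $C_{n}(y_{1},\ldots ,y_{k})$. The Tychonoff alternative you sketch at the end is essentially an abstract repackaging of the paper's nested-intersection idea. Either way, the result is standard and your version is fine.
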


\begin{proof}
  By the continuity of projections, for any $m,n\in \bN$, $m>n$ the
  set $\pi_{mn}(C_{m})$ is non-empty and compact. It is easy to check
  that
  $C_{n}=\pi_{nn}(C_{n})\supset \pi_{n+1\,n}(C_{n+1})\supset \ldots$,
  for any $n\in \bN$. Since an intersection of a decreasing sequence
  of non-empty compact sets is itself non-empty, there is an $y_{1}$
  such that $y_{1}\in \bigcap_{n\geq 1}\pi_{n1}(C_{n})$.

  Consider
  $C_{n}(y_{1})=\pi_{n-1}[ \pi_{n1}^{-1}(\{y_{1}\})\cap C_{n} ]$ for
  $n\geq2$.  By construction, the family $\{C_{n}(y_{1})\}_{n\geq 2}$
  satisfies the assumptions of this Lemma. Repeating the argument that
  lead to the choice of $y_{1}$, we can find $y_{2}$ such that for any
  $n>2$, $(y_{1},y_{2})\in \pi_{n2}(C_{n})$. Now, by induction, we
  obtain $y=(y_{1},y_{2},\ldots)\in \bR^{\bN}$ such that
  $\pi_{n}(y)\in C_{n}$ for each $n\in \bN$, as required.
\end{proof}

Now we are ready to show a proof of
Theorem~\ref{thmmasterlevymeasure}:

\begin{proof} [Proof of Theorem~\ref{thmmasterlevymeasure}]
  Firstly we prove the uniqueness. From
  Proposition~\ref{propsystemchtrip}, the functions $\Gamma$ and $B$
  are unique, so we only need to check that if there is a measure
  $\widetilde{\nu }$ that does not charge zero and satisfies
  \eqref{eqn1.3.4}, then $\nu \equiv \widetilde{\nu }$. If $\nu$ and
  $\widetilde{\nu }$ are two measures satisfying \eqref{eqn1.3.4} that
  do not charge zero, then \eqref{eqn1.3.7} holds for $\nu$ and
  $\widetilde{\nu }$ and they coincide on $\cB_{0}$. By
  Lemma~\ref{proptype1} $\nu$ and $\widetilde{\nu }$ are
  $\sigma $-finite. The uniqueness follows from Lemma~\ref{lemma2}.

  Now we proceed to prove the existence. We divide the proof in four
  steps to make it easier to read. As a starting point, we define a
  measure on $\cB_{0}$, and then we extend it to the $\sigma $-algebra
  generated by $\cB_{0}$, which in virtue of Lemma~\ref{lemma2}
  coincides with $\cB(\bR)^{U}$.

  \paragraph{Step 1: Defining the pre-measure}

  Recall that if $A_{0}\in \cB_{0}$, then there is a
  $\widehat{u}\in \widehat{U}$ and $A\in \cB( \bR^{\widehat{u}})$,
  such that
  $A_{0}=\pi_{\widehat{u}}^{-1}(A\setminus 0^{\widehat{u} }) .$ Let us
  define the set function
  $\nu_{0}\colon\cB_{0}\rightarrow [ 0,\infty ]$, by
  \begin{equation}
    \nu_{0}(A_{0}) =\nu_{\widehat{u}}(A) ,
    \qquad  A_{0}\in \cB_{0},
    \label{eqn1.3.9}
  \end{equation}
  provided that
  $A_{0}=\pi_{\widehat{u}}^{-1}(A\setminus 0^{\widehat{u }})$. Here
  $\nu_{\widehat{u}}$ is the L\'{e}vy measure of $X_{ \widehat{u}}$.
  We claim that $\nu$ is well defined. Indeed, suppose that there are
  $\widehat{u},\widehat{v}\in \widehat{U}$ and
  $A^{1}\in \cB(\bR^{\widehat{u}}) $,
  $A^{2}\in \cB(\bR^{\widehat{v}})$ such that
  $A_{0}=\pi_{\widehat{u} }^{-1}(A^{1}\setminus 0^{\widehat{u}}) =\pi_{\widehat{v} }^{-1}(A^{2}\setminus 0^{\widehat{v}})$. Then,
  $A_{0}=\pi_{ \widehat{u}}^{-1}(A^{1}\setminus 0^{\widehat{u}}) =\pi_{ \widehat{w}}^{-1}[ \pi_{\widehat{w}\widehat{v}}^{-1}( A^{2}\setminus 0^{\widehat{v}}) ]$
  where $\widehat{w}=\widehat{ u}\cup \widehat{v}\in \widehat{U}.$
  Thus, by Proposition~\ref{propsystemchtrip}
  \begin{align*}
    \nu_{\widehat{u}}(A^{1})
    ={} & \nu_{0}(A_{0})                                                                           \\
    ={} & \nu_{\widehat{w}}(\pi_{\widehat{w}\widehat{v}}^{-1}(A^{2}\setminus 0^{\widehat{v}})) \\
    ={} & \nu_{\widehat{v}}(A^{2}) ,
  \end{align*}
  where we used that
  $\pi_{\widehat{w}\widehat{v}}^{-1}( A^{2}\setminus 0^{\widehat{v}}) \in \cB( \bR^{\widehat{w} }\setminus 0^{\widehat{w}})$. This
  implies that $\nu$ is well defined.

  \paragraph{Step 2: $\nu_{0}$ is finitely additive}

  In this step we show that $\nu_{0}$ is finitely additive. Let
  $A_{0},B_{0}\in \cB_{0}$ with $A_{0}\cap B_{0}=\emptyset $. There
  exist $\widehat{u},\widehat{v}\in \widehat{U},$
  $A\in \cB(\bR^{\widehat{u}})$ and $B\in \cB( \bR^{ \widehat{v}})$
  such that $A_{0}=\pi_{\widehat{u}}^{-1}(A\setminus 0^{\widehat{u}})$
  and $B_{0}=\pi_{\widehat{v} }^{-1}(B\setminus 0^{\widehat{v}}) .$
  Put
  $\widehat{w}=\widehat{ u}\cup \widehat{v}\in \widehat{U}$. Again, by
  Proposition~\ref{propsystemchtrip}
  \begin{align*}
    \nu_{0}(A_{0}\cup B_{0})
    ={} & \nu_{0}\{ \pi_{\widehat{w} }^{-1}[ \pi_{\widehat{w}\widehat{u}}^{-1}(A\setminus 0^{ \widehat{u}}) \cup \pi_{\widehat{w}\widehat{v}}^{-1}(B\setminus 0^{\widehat{v}}) ] \} \\
    ={} & \nu_{\widehat{w}}[ \pi_{\widehat{w}\widehat{u}}^{-1}(A\setminus 0^{\widehat{u}}) \cup \pi_{\widehat{w}\widehat{v} }^{-1}(B\setminus 0^{\widehat{v}}) ]                     \\
    ={} & \nu_{\widehat{w}}[ \pi_{\widehat{w}\widehat{u}}^{-1}(A\setminus 0^{\widehat{u}}) ] +\nu_{\widehat{w}}[ \pi_{ \widehat{w}\widehat{v}}^{-1}(B\setminus 0^{\widehat{v}}) ]    \\
    ={} & \nu_{0}(A_{0}) +\nu_{0}(B_{0}) ,
  \end{align*}
  thanks to
  $\pi_{\widehat{w}\widehat{u}}^{-1}(A\setminus 0^{\widehat{u }}) \cap \pi_{\widehat{w}\widehat{v}}^{-1}(B\setminus 0^{ \widehat{v}}) =\emptyset$. This
  also implies that $\nu_{0}( \emptyset) =0$.

  \paragraph{Step 3: Continuity at the empty set}

  Recall that a set function $\mu$ defined on $\cR$ a ring of sets
  which is finitely additive is $\sigma$-additive if and only if it is
  continuous at the empty set, i.e.\ if $A_{n}\downarrow \emptyset$
  with $A_{n}\in \cR$ then $\mu(A_{n}) \rightarrow 0$ or equivalently
  if $(A_{n})_{n\geq 1}$ is a decreasing sequence on $\cR$ with
  $\inf_{n\geq 1}\mu( A_{n}) >0$ then
  $\bigcap_{n\geq 1}A_{n}\neq \emptyset$.

  Now we prove that $\nu_{0}$ as in \eqref{eqn1.3.9} is continuous at
  the empty set. Let $(A_{n}^{0})_{n\geq 1}\subset\nobreak \cB_{0}$,
  then there are $\widehat{u}_{n}\in \widehat{U}$ and
  $A_{n}\in \cB(\bR^{\widehat{u}_{n}})$, such that
  $A_{n}^{0}=\pi_{ \widehat{u}_{n}}^{-1}(A_{n}\setminus 0^{\widehat{u}_{n}})$
  for any $n\in \bN$. Without loss of generality we may assume that
  $\widehat{u}_{n}\subset \widehat{u}_{n+1},$ otherwise put
  $\widehat{u}_{n}^{\prime }=\bigcup_{i=}^{n}\widehat{u}_{n}$ and use
  that
  $\pi_{\widehat{u}_{n}}^{-1}(A_{n}\setminus 0^{\widehat{u}_{n}}) =\pi_{\widehat{u }_{n}^{\prime }}^{-1}(\pi_{\widehat{u}_{n}^{\prime }\widehat{u}_{n}}^{-1}(A_{n}\setminus 0^{\widehat{u}_{n}}))$.

  Consider $(A_{n}^{0})_{n\geq 1}$ be a decreasing sequence with
  $\inf_{n\geq 1}\nu_{0}(A_{n}^{0})>0$. We want to show that
  $\bigcap_{n\in \bN}A_{n}^{0}\not=\emptyset$. The condition
  with the infimum is equivalent to saying that there is an $\epsilon
  >0$ such that for all $n\in \bN$,
  $\nu_{\widehat{u}_{n}}(A_{n})>\epsilon$. Considering that
  $\nu_{\widehat{u}_{n}}$ is a L\'{e}vy measure for any
  $\widehat{u}_{n}\in \widehat{U}$, we have that there is a compact
  set $K_{n}\subseteq A_{n}\setminus 0^{\widehat{u}_{n}}\subset
  \bR^{\widehat{u}_{n}}\setminus 0^{\widehat{u}_{n}}$ such that
  \begin{equation}
    \nu_{\widehat{u}_{n}}[(A_{n}\setminus 0^{\widehat{u}_{n}})\setminus
    K_{n}]=\nu_{\widehat{u}_{n}}(A_{n}\setminus K_{n})<\frac{\epsilon }{2^{n+1}},\qquad n\in \bN.  \label{eq:3}
  \end{equation}
  Let
  $C_{n}=\bigcap_{k=1}^{n}\pi_{\widehat{u}_{n}\widehat{u}_{k}}^{-1}(K_{k}) $. We
  see that $C_{n}$ is compact on $\bR^{\widehat{u}_{n}}$ with
  $C_{n}\subset K_{n}\subset A_{n}\setminus 0^{\widehat{u}_{n}}$. Further,
  $C_{n}$ is non-empty for all $n\in \bN$. Indeed, from (\ref{eq:3})
  and the fact that $0^{\widehat{u}_{n}}\notin C_{n}$, we have
  \begin{equation*}
    \epsilon
    < \nu_{\widehat{u}_{n}}(A_{n}\setminus C_{n})
    + \nu_{\widehat{u}_{n}}(C_{n})
    < \nu_{\widehat{u}_{n}}(C_{n})
    + \tfrac{\epsilon }{2},
  \end{equation*}
  or in other words
  $\nu_{\widehat{u}_{n}}(C_{n})>\frac{\epsilon }{2}$. By construction
  $\bigcap_{n\geq 1}\pi_{\widehat{u}_{n}}^{-1}(C_{n})\subset \bigcap_{n\geq 1}A_{n}^{0}$,
  meaning that in order to show that
  $\bigcap_{n\in \bN}A_{n}^{0}\not=\emptyset$ we only need to check
  that
  $\bigcap_{n\geq 1}\pi_{\widehat{u}_{n}}^{-1}(C_{n})\neq \emptyset $. Further,
  defining
  $\widehat{u}_{\infty }\coloneqq \bigcup_{n\geq 1}\widehat{u}_{n}$
  and putting
  $C_{n}^{_{\infty }}\coloneqq \pi_{\widehat{u}_{\infty }\widehat{u}_{n}}^{-1}(C_{n})$
  with $n\in \bN$, we get
  $\bigcap_{n\geq 1}\pi_{\widehat{u}_{n}}^{-1}(C_{n})=\pi_{\widehat{u}_{\infty }}^{-1}(\bigcap_{n\geq 1}C_{n}^{_{\infty }})$. Therefore,
  it suffices to prove that $\bigcap_{n\geq 1}C_{n}^{_{\infty }}$ is
  non-empty. Note that if $\widehat{u}_{\infty }$ is a finite set,
  then $(C_{n}^{_{\infty }})_{n\geq 1}$ is a collection of non-empty
  compact sets on $\bR^{\widehat{u}_{\infty }}$, implying trivially
  that $\bigcap_{n\geq 1}C_{n}^{_{\infty }}\neq \emptyset ,$ so we
  only consider the case when $\widehat{u}_{\infty }$ has infinitely
  many elements. Since $\widehat{u}_{n}\subset \widehat{u}_{n+1}$, we
  can assume that there is $(u_{n})_{n\in \bN}\subset U$ such that
  $\widehat{u}_{n}=(u_{i})_{i=1}^{n}$. Note that in this case
  $\pi_{\widehat{u}_{n+1}\widehat{u}_{n}}(C_{n+1})\subset C_{n}$ due
  to
  $\pi_{\widehat{u}_{n+1}\widehat{u}_{k}}^{-1}(K_{k})=\pi_{\widehat{u}_{n+1}\widehat{u}_{n}}^{-1}(\pi_{\widehat{u}_{n}\widehat{u}_{k}}^{-1}(K_{k}))$. Hence,
  from Lemma~\ref{lem:1} there is $x\in \bR^{\widehat{u}_{\infty }}$
  such that $\pi_{\widehat{u}_{\infty }\widehat{u}_{n}}(x)\in C_{n}$
  for all $n\in \bN$ concluding thus that
  $\bigcap_{n\geq 1}C_{n}^{_{\infty }}\neq \emptyset$.

  \paragraph{Step 4: Extending $\nu_{0}$}

  At this point we have so far that $\nu_{0}$ is a $\sigma$-additive
  measure on the ring $\cB_{0}$. By the Carath\'eodory Extension
  Theorem, it follows that there is an extension of $\nu_{0}$, lets
  say $\nu ,$ to $\sigma(\cB_{0}) =\cB(\bR)^{U}$, such that
  $\nu \mid_{\cB_{0}}=\nu_{0}$. This step concludes the proof.
\end{proof}

\section*{Acknowledgments}
Financial support from the Center for Research in the Econometric
Analysis of Time Series (grant DNRF78) funded by the Danish National
Research Foundation is gratefully acknowledged.

\nocite{*} \bibliographystyle{plainnat} \bibliography{sdfields-arxiv}

\end{document}